\newcounter{alphthm}
\DeclareMathOperator{\llb}{\llbracket}
\DeclareMathOperator{\rrb}{\rrbracket}
\DeclareMathAlphabet{\mathpzc}{OT1}{pzc}{m}{it}
\newcommand{\im}{\mathrm Im}
\theoremstyle{plain}
\newtheorem{theorem}{Theorem}[section]
\newtheorem{lemma}[theorem]{Lemma}
\newtheorem{proposition}[theorem]{Proposition}
\newtheorem{cor}[theorem]{Corollary}
\theoremstyle{definition}
\newcommand{\be}{\begin{equation}}
\newcommand{\ee}{\end{equation}}
\newcommand{\ben}{\begin{enumerate}}
\newcommand{\een}{\end{enumerate}}
\newcommand{\wt}{\widetilde}
\title{Curvature collineations on Lie algebroid structure}
\author{C. M. Arcu\c{s}, E. Peyghan and E. Sharahi\\
Department of Mathematics, Faculty  of Science, Arak University,\\
 Arak 38156-8-8349,  Iran\\
Constantin M Arcu\c{s}\\
Secondary School "Cornelius Radu"\\
Radinesti Village, 217196\\
Gorj County, Romania\\
Emails: c\_arcus@radinesti.ro,\ e-peyghan@araku.ac.ir,\\
 esasharahi@gmail.com
}
\begin{document}
\maketitle
\begin{abstract}
Considering prolongation of a Lie algebroid equipped with a spray, defining some classical tensors, we show that a Lie symmetry of a spray is a curvature collineation for these tensors.
\end{abstract}
\textbf{Keywords:} Curvature collineation, Lie algebroid, Lie symmetry, projectable section, spray. \footnote{ 2010 Mathematics subject Classification: 53C05, 17B66, 70S05.}
\section{Introduction}
The notion of a Lie algebroid structure as generalization of the notion of Lie algebra of a Lie group, introduced by Pradines
in 1967. The method of extracting a Lie algebroid from a differentiable groupoid, is completely analogous of extracting a Lie algebra from a Lie group\cite{pradines}. A Lie algebroid is a vector bundle that each of its sections is mapped to a vector field by a linear bundle map together with a bracket on the sections of the vector bundle that is $\mathbb{R}$-bilinear, alternating and satisfies the Jacobi identity. This map must be a homomorphism of Lie algebras and is called the anchor map of the vector bundle. More attributes on the anchor map, may induce special properties on the vector bundle. For example, if the anchor map is a submersion, then all of its right inverses are connections in the vector bundle (see \cite{makenzie}). When anchor map is the identity, the Lie algebroid reduces to the tangent bundle.  Thus Lie algebroids are extensions of the tangent bundle that make possible to study more generic geometric objects. There are many studies on Lie algebroid structures  (e.g., \cite{courant, grabowski1, grabowski2, makenzie}) and their relation to physics and mechanics (e.g. \cite{martinez, vacaru1, vacaru2, vacaru3}).

  Curvature collineations as symmetries of space-time, are powerful tools in general relativity \cite{aichelburg, hall}. It has been shown in \cite{szilasi1} that if the complete lift of a vector field is a Lie symmetry of a spray, then it is a curvature collineation for some classical tensors. The aim of this paper is to obtain the similar results on the prolongation of a Lie algebroid. In section 2 we recall the definition of prolongation of a Lie algebroid and review some basic concepts such as the vertical and complete lifts on the prolongation of a Lie algebroid. In section 3 we introduce a kind of derivation along projectable sections and in the last section we show that in a prolongation of a Lie algebroid the complete lift of a Lie symmetry of a spray, is a curvature collineation for some classical tensor fields.

 \section{Preliminaries}
 Let $\pi:E\rightarrow M$ be a vector bundle of rank $m$ over an $n$-dimensional base manifold $M$. Denote by $\Gamma(E)$ the $C^\infty(M)$-module of smooth sections of $\pi$.
A {\it{Lie algebroid structure}} $([., .]_E, \rho)$ on $E$ is a Lie bracket $[., .]_E$ on the module $\Gamma(E)$ together with a bundle map $\rho: E\rightarrow TM$, called the {\it{anchor map}}, such that we also denote by $\rho:\Gamma(E)\rightarrow\chi(M)$
the homomorphism of $C^\infty(M)$-modules induced by the anchor map and for being algebroid, the following role must be hold.
\[
[\xi, f\eta]_E=f[\xi, \eta]_E+\rho(\xi)(f)\eta,
\]
for all $\xi,\eta\in\Gamma(E), f\in C^\infty(M)$. Here we regard the anchor map as a homomorphism $\Gamma(E)\rightarrow\chi(M)$ of $C^\infty(M)$-modules, denoted by the same symbol. Then we also have
$$[\rho(\xi),\rho(\eta)]=\rho[\xi,\eta]_E,$$
so the anchor map $\rho:\Gamma(E)\rightarrow\chi(M)$ is a Lie algebra homomorphism at the same time \cite{martinez}.

On Lie algebroids $(E, [., .]_E, \rho)$  the differential of $E$, $d^E:\Gamma(\wedge^kE^*)\rightarrow\Gamma(\wedge^{k+1}E^*)$, is defined by
\begin{align*}
d^E\theta(\xi_0,\ldots, \xi_k)&=\sum_{i=0}^k(-1)^i\rho(\xi_i)(\mu(\xi_0,\ldots, \iota{\xi_i}, \ldots, \xi_k))\\
&\ \ \ +\sum_{i<j}(-1)^{i+j}\theta([\xi_i, \xi_j]_E, X_0, \ldots, \iota{\xi_i}, \ldots, \iota{\xi_j}, \ldots, \xi_k),
\end{align*}
for $\theta\in\Gamma(\wedge^kE^*)$ and $\xi_0, \ldots, \xi_k\in\Gamma(E)$, where the $\iota$ side an argument means the absence of that argument. In particular, if $f\in\Gamma(\wedge^0 E^*)=C^\infty(M)$ we have $d^Ef(\xi)=\rho(\xi)f$. By using the above equation one can deduce $(d^E)^2=0$. Moreover, for $\xi\in\Gamma(E)$, the contraction $i_\xi:\Gamma(\wedge^pE^*)\rightarrow\Gamma(\wedge^pE^*)$ is defined in the standard way and the Lie differential operator $\pounds_\xi^E:\Gamma(\wedge^pE^*)\rightarrow\Gamma(\wedge^pE^*)$ is defined by $\pounds_\xi^E=i_\xi\circ d^E+d^E\circ i_\xi$ \cite{grabowski1}.

If we take a local coordinate system $(x^i)_{i=1}^{n}$ on $M$ and a local basis $(e_\alpha)_{\alpha=1}^{m}$ of sections of $E$, then we have the corresponding local coordinate $(\textbf{x}^i, \textbf{y}^\alpha)$ on $E$, where $\textbf{x}^i:=x^i\circ\pi$ and $\textbf{y}^\alpha(u)$ is the $\alpha$-th coordinate of $u\in E$ in the given basis. Such coordinates determine local functions $\rho^i_\alpha$, $L^\gamma_{\alpha\beta}$ on $M$ which contain the local information of the Lie algebroid structure, and accordingly they are called the structure functions of the Lie algebroid \cite{martinez}. These functions are given by
\[
\rho(e_\alpha)=\rho^i_\alpha\frac{\partial}{\partial x^i}\qquad\text{and}\ \ \ [e_\alpha, e_\beta]_E=L^\gamma_{\alpha\beta}e_\gamma.
\]
An easy calculation leads to the structure equations
\begin{equation}\label{2}
(i)\ \rho^j_\alpha\frac{\partial\rho^i_\beta}{\partial x^j}-\rho^j_\beta\frac{\partial\rho^i_\alpha}{\partial x^j}=\rho^i_\gamma L_{\alpha\beta}^\gamma,\ \ \ (ii)\ \sum_{(\alpha, \beta, \gamma)}[\rho^i_\alpha\frac{\partial L^\nu_{\beta\gamma}}{\partial x^i}+L^\nu_{\alpha\mu}L^\mu_{\beta\gamma}]=0,
\end{equation}
The {\it{vertical}} lift of a function $f\in C^{\infty}(M)$ is $f^\vee:=f\circ\pi\in C^{\infty}(E)$. The vertical lift $\xi^\vee$ of a section $\xi\in\Gamma(E)$ is given by
$$u\in E\longmapsto \xi^\vee(u):=\xi(\pi(u))^{\vee}_{u}\in TE,$$
where ${}^{\vee}_u: E_{\pi(u)}\rightarrow T_u (E_{\pi(u)})$ is the canonical isomorphism between the vector spaces $E_{\pi(u)}$ and $T_uE_{\pi(u)}$(see, e.g., \cite{szilasi2}, 2.4.(5)). Then $\xi^\vee$ is a vertical vector field on $E$. Thus it follows that if $\xi=\xi^\alpha e_\alpha\in\Gamma(E)$, then the vertical lift $\xi^\vee$ has the locally expression $\xi^\vee=(\xi^\alpha\circ\pi)\frac{\partial}{\partial\textbf{y}^\alpha}$. If $\xi$, $\eta$ are sections of $E$ and $f\in C^\infty(M)$, then using the local expressions of them, we obtain \cite{peyghan}
\[
(\xi+\eta)^\vee=\xi^\vee+\eta^\vee,\ \ \ (f\xi)^\vee=f^\vee \xi^\vee,\ \ \ \xi^\vee f^\vee=0.
\]
The {\it{complete lift}} of a smooth function $f\in C^{\infty}(M)$ into $C^{\infty}(E)$ is the smooth function
\[
f^c:E\longrightarrow\mathbb{R},\ \ \ \ u\longmapsto f^c(u):=\rho(u)f.
\]
Then
\[
(fg)^c=f^cg^\vee+f^\vee g^c,
\]
because for every $u\in E$
\begin{align*}
(fg)^c(u)&=\rho(u)(fg)=(\rho(u)f)(g\circ\pi)(u)+(f\circ\pi)(u)(\rho(u)g)\\
&=f^c(u)g^\vee(u)+f^\vee(u)g^c(u).
\end{align*}
Locally we have
\begin{align*}
f^c(u)=f^c(u^\alpha e_\alpha)=\rho(u^\alpha e_\alpha)(f)=u^\alpha\rho(e_\alpha)(f)=u^\alpha\rho^i_\alpha\frac{\partial f}{\partial x^i}=(\textbf{y}^\alpha((\rho^i_\alpha\frac{\partial f}{\partial x^i})\circ\pi))(u),
\end{align*}
i.e., $f^c|_{\pi^{-1}(U)}=\textbf{y}^\alpha((\rho^i_\alpha\frac{\partial f}{\partial x^i})\circ\pi)$.
\begin{lemma}\cite{peyghan}
If $\xi$ is a section of $E$ and $f, g\in C^\infty(M)$, then
\[
\ (f+g)^c=f^c+g^c,\ \ \ \ (fg)^c=f^cg^\vee+f^\vee g^c,\ \ \ \ \xi^\vee f^c=(\rho(\xi)f)^\vee.
\]
\end{lemma}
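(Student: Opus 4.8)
The plan is to prove each of the three identities separately, working always in the local coordinate expressions already developed in the excerpt, since the complete lift $f^c$ has been given the explicit local formula $f^c|_{\pi^{-1}(U)}=\textbf{y}^\alpha((\rho^i_\alpha\frac{\partial f}{\partial x^i})\circ\pi)$, and the vertical lift $\xi^\vee$ has the expression $\xi^\vee=(\xi^\alpha\circ\pi)\frac{\partial}{\partial\textbf{y}^\alpha}$. For the first identity, $(f+g)^c=f^c+g^c$, I would argue directly from the definition $f^c(u)=\rho(u)f$: since $\rho(u)$ is a tangent vector at $\pi(u)$ and hence acts linearly on $C^\infty(M)$, additivity is immediate, $\rho(u)(f+g)=\rho(u)f+\rho(u)g$. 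This is the cheapest of the three and needs no local computation at all.

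The second identity, $(fg)^c=f^cg^\vee+f^\vee g^c$, has in fact already been verified verbatim in the paragraph preceding the lemma statement: one applies the Leibniz rule for the derivation $\rho(u)$ acting on the product $fg$, namely $\rho(u)(fg)=(\rho(u)f)\,g(\pi(u))+f(\pi(u))\,(\rho(u)g)$, and then recognizes the two factors $g\circ\pi=g^\vee$ and $f\circ\pi=f^\vee$. So for this case I would simply invoke that the tangent vector $\rho(u)$ satisfies the Leibniz rule and rewrite the evaluated-at-$\pi(u)$ factors as vertical lifts, exactly as the excerpt does.

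For the third identity, $\xi^\vee f^c=(\rho(\xi)f)^\vee$, I expect the genuine work to lie, and here I would compute in coordinates. Writing $\xi=\xi^\alpha e_\alpha$, so that $\xi^\vee=(\xi^\alpha\circ\pi)\frac{\partial}{\partial\textbf{y}^\alpha}$, and using $f^c=\textbf{y}^\beta((\rho^i_\beta\frac{\partial f}{\partial x^i})\circ\pi)$, I would apply $\frac{\partial}{\partial\textbf{y}^\alpha}$ to $f^c$. The key point is that the factor $(\rho^i_\beta\frac{\partial f}{\partial x^i})\circ\pi$ depends only on the base coordinates $\textbf{x}^i$ and is therefore annihilated by $\frac{\partial}{\partial\textbf{y}^\alpha}$, while $\frac{\partial\textbf{y}^\beta}{\partial\textbf{y}^\alpha}=\delta^\beta_\alpha$ collapses the sum. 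This leaves $\xi^\vee f^c=(\xi^\alpha\circ\pi)\big((\rho^i_\alpha\frac{\partial f}{\partial x^i})\circ\pi\big)=\big(\xi^\alpha\rho^i_\alpha\frac{\partial f}{\partial x^i}\big)\circ\pi$.

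The final step is to identify the right-hand side. Since $\rho(\xi)=\xi^\alpha\rho^i_\alpha\frac{\partial}{\partial x^i}$, we have $\rho(\xi)f=\xi^\alpha\rho^i_\alpha\frac{\partial f}{\partial x^i}$, a smooth function on $M$, and its vertical lift is precisely its composition with $\pi$; hence $(\rho(\xi)f)^\vee=\big(\xi^\alpha\rho^i_\alpha\frac{\partial f}{\partial x^i}\big)\circ\pi$, matching what was just computed. The main (and only real) obstacle is bookkeeping: one must keep straight that $\frac{\partial}{\partial\textbf{y}^\alpha}$ kills every base-coordinate-dependent factor while $\textbf{y}^\beta$ is the genuinely fiber-linear part of $f^c$, so that the whole coordinate manipulation reduces to a single Kronecker-delta contraction.
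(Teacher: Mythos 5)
Your proof is correct and follows essentially the route the paper itself takes: the product identity is verified by the same Leibniz-rule computation that appears verbatim in the paragraph preceding the lemma, and your treatment of the other two identities (linearity of the tangent vector $\rho(u)$, and the coordinate computation with $f^c|_{\pi^{-1}(U)}=\textbf{y}^\beta((\rho^i_\beta\frac{\partial f}{\partial x^i})\circ\pi)$ and $\xi^\vee=(\xi^\alpha\circ\pi)\frac{\partial}{\partial\textbf{y}^\alpha}$, reducing to a Kronecker-delta contraction) is exactly the standard argument the paper implicitly relies on when citing \cite{peyghan}. No gaps.
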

We refer that every smooth section $\omega$ of the dual bundle of $\pi:E\longrightarrow M$ determines a smooth function $\widehat{\omega}\longrightarrow M$ given by
$$\widehat{\omega}(u):=\omega_{\pi(u)}(u).$$
Now let $\xi$ be a smooth section of $E$. There exist a unique vector field $\xi^c$, called the {\it{complete lift}} of $\xi$, such that

\textbf{i}) $\xi^c$ is $\pi$-projectable on $\rho(\xi)$,

\textbf{ii}) $\xi^c(\hat{\theta})=\widehat{\pounds_{\xi}^{E}\theta}$,\\
where $\theta \in \Gamma(E^*)$.

It is known that $\xi^c$ has the following coordinate expression(\cite{grabowski1}, \cite{grabowski2}):
\begin{equation}\label{esa}
\xi^c=\{(\xi^\alpha \rho_{\alpha}^{i})\circ \pi\} \frac{\partial}{\partial \textbf{x}^i}+\textbf{y}^\beta \{(\rho_{\beta}^{j}\frac{\partial \xi^\alpha}{\partial x^j}-\eta^\gamma L^\alpha_{\gamma\beta})\circ\pi\} \frac{\partial}{\partial \textbf{y}^\alpha}.
\end{equation}
\begin{lemma}
\cite{peyghan} If $\xi$ and $\eta$ are sections of $E$ and $f\in C^\infty(M)$, then
\begin{itemize}
\item[(i)] $\xi^cf^c=(\rho(\xi)f)^c,\ \ \ \text{for all}\ \ \ \ f\in C^\infty(M),$
\item[(ii)] $\xi^cf^\vee=(\rho(\xi)f)^\vee,$
\item[(iii)] $[\xi^c, \eta^c]=[\xi, \eta]^c_E,\ \ \ [\xi^c, \eta^\vee]=[\xi, \eta]^{\vee}_E,\ \ \ [\xi^\vee, \eta^\vee]=0.$
\end{itemize}
\end{lemma}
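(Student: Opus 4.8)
The plan is to avoid a brute-force coordinate grind where possible by exploiting the two defining properties of the complete lift together with one observation: the complete lift of a function is itself a hatted section. Indeed, since $(d^Ef)(\xi)=\rho(\xi)f$, we get $\widehat{d^Ef}(u)=(d^Ef)_{\pi(u)}(u)=\rho(u)f=f^c(u)$, so that $f^c=\widehat{d^Ef}$. This single identity converts statements about $\xi^c f^c$ into statements about $\xi^c(\hat{\theta})$, which the definition of $\xi^c$ controls directly.

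For (i), I would feed $\theta=d^Ef$ into the defining property $\xi^c(\hat{\theta})=\widehat{\pounds_\xi^E\theta}$, obtaining $\xi^c f^c=\widehat{\pounds_\xi^E(d^Ef)}$. Then I compute $\pounds_\xi^E(d^Ef)=(i_\xi d^E+d^E i_\xi)(d^Ef)=d^E(i_\xi d^Ef)=d^E(\rho(\xi)f)$, where the first summand vanishes because $(d^E)^2=0$ and where $i_\xi d^Ef=(d^Ef)(\xi)=\rho(\xi)f$. Hatting back gives $\xi^c f^c=\widehat{d^E(\rho(\xi)f)}=(\rho(\xi)f)^c$. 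For (ii), there is nothing to compute: property \textbf{i}) says $\xi^c$ is $\pi$-projectable on $\rho(\xi)$, which is exactly the assertion $\xi^c(g\circ\pi)=(\rho(\xi)g)\circ\pi$ for all $g\in C^\infty(M)$; taking $g=f$ yields $\xi^c f^\vee=(\rho(\xi)f)^\vee$.

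For (iii) the key reduction is that two vector fields on $E$ coincide once they agree on all functions $f^\vee$ ($f\in C^\infty(M)$) and $\hat{\theta}$ ($\theta\in\Gamma(E^*)$), since in a local chart these already include the coordinate functions $\textbf{x}^i=(x^i)^\vee$ and $\textbf{y}^\alpha=\widehat{e^\alpha}$, and a vector field is determined by its action on coordinates. I would first record the auxiliary identities $\eta^\vee\hat{\theta}=\langle\theta,\eta\rangle^\vee$ (a one-line check in coordinates), the Cartan-type formula $\rho(\xi)\langle\theta,\eta\rangle=\langle\pounds_\xi^E\theta,\eta\rangle+\langle\theta,[\xi,\eta]_E\rangle$, and the operator identity $[\pounds_\xi^E,\pounds_\eta^E]=\pounds_{[\xi,\eta]_E}^E$. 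Then, for $[\xi^c,\eta^c]=[\xi,\eta]_E^c$: testing on $f^\vee$ and applying (ii) twice reduces the left side to $([\rho(\xi),\rho(\eta)]f)^\vee=(\rho[\xi,\eta]_E f)^\vee$, which equals $[\xi,\eta]_E^c f^\vee$; testing on $\hat{\theta}$ and applying $\xi^c\hat{\theta}=\widehat{\pounds_\xi^E\theta}$ twice reduces it to $\widehat{\pounds_{[\xi,\eta]_E}^E\theta}=[\xi,\eta]_E^c\hat{\theta}$. The mixed bracket $[\xi^c,\eta^\vee]=[\xi,\eta]_E^\vee$ is handled the same way: on $f^\vee$ both sides are $0$ (using $\eta^\vee f^\vee=0$), and on $\hat{\theta}$ the Cartan formula collapses $[\xi^c,\eta^\vee]\hat{\theta}=\bigl(\rho(\xi)\langle\theta,\eta\rangle-\langle\pounds_\xi^E\theta,\eta\rangle\bigr)^\vee$ to $\langle\theta,[\xi,\eta]_E\rangle^\vee=[\xi,\eta]_E^\vee\hat{\theta}$. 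Finally $[\xi^\vee,\eta^\vee]=0$ is immediate on both test families, since $\xi^\vee f^\vee=\eta^\vee f^\vee=0$ and $\xi^\vee\hat{\theta}=\langle\theta,\xi\rangle^\vee$ is again annihilated by $\eta^\vee$.

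I expect the main obstacle to be organisational rather than deep: establishing and then disciplined-ly reusing the Cartan identity and the commutator $[\pounds_\xi^E,\pounds_\eta^E]=\pounds_{[\xi,\eta]_E}^E$, and keeping straight which test family ($f^\vee$ versus $\hat{\theta}$) each line needs. A fully coordinate proof via \eqref{esa} is also available; there the only nontrivial point is that the second-order terms arising in $\xi^c(\eta^c f)$ cancel by symmetry of mixed partials, while the first-order terms recombine exactly through the structure equation \eqref{2}(i), namely $\rho^j_\alpha\frac{\partial\rho^i_\beta}{\partial x^j}-\rho^j_\beta\frac{\partial\rho^i_\alpha}{\partial x^j}=\rho^i_\gamma L_{\alpha\beta}^\gamma$.
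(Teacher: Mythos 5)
Your proposal is correct: I checked each step, and in particular the pivotal observation $f^c=\widehat{d^Ef}$ is valid (pointwise, $(d^Ef)_{\pi(u)}(u)=\rho(u)f$), the computation $\pounds_\xi^E(d^Ef)=d^E(\rho(\xi)f)$ uses only $(d^E)^2=0$, the Cartan-type identity $\rho(\xi)\langle\theta,\eta\rangle=\langle\pounds_\xi^E\theta,\eta\rangle+\langle\theta,[\xi,\eta]_E\rangle$ follows from the paper's definitions of $d^E$ and $\pounds^E_\xi$, and the commutator $[\pounds_\xi^E,\pounds_\eta^E]=\pounds^E_{[\xi,\eta]_E}$ on $\Gamma(E^*)$ reduces, via that identity, to the Jacobi identity together with $\rho[\xi,\eta]_E=[\rho(\xi),\rho(\eta)]$. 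Your reduction of (iii) to the test families $\{f^\vee\}$ and $\{\hat\theta\}$ is legitimate, and you correctly supplied the needed justification that these families contain the coordinate functions $\textbf{x}^i=(x^i)^\vee$ and $\textbf{y}^\alpha=\widehat{e^\alpha}$ locally. Note, however, that this paper does not prove the lemma at all: it is quoted from \cite{peyghan}, and both that reference and this paper's own style for such verifications (compare the proof of the Lie-symmetry proposition, or the bracket computations leading to (\ref{LieB})) proceed by local coordinates, using the expression (\ref{esa}) and the structure equations (\ref{2}). So your route is genuinely different: you replace index bookkeeping by the uniqueness clause in the definition of $\xi^c$ plus Cartan calculus on the Lie algebroid. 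What each approach buys: your invariant argument is shorter, manifestly chart-independent, and makes transparent \emph{why} the structure equations must enter any coordinate proof (they are precisely the local shadow of the Jacobi identity and of the anchor being a bracket homomorphism, which drive your commutator identity); the coordinate computation, on the other hand, is fully self-contained, produces the explicit formula (\ref{esa}) as a byproduct, and requires no prior development of the Cartan calculus for $d^E$, $i_\xi$, $\pounds^E_\xi$.
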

\subsection{The prolongation of a Lie algebroid}
In this section we recall the notion of {\it{prolongation}} of a Lie algebroid and we consider a Lie algebroid structure on it. We also study the vertical and complete lifts on the prolongation of a Lie algebroid.

Let $\pounds^\pi E$ be the subset of $E\times TE$ defined by $\pounds^\pi E=\{(u, z)\in E\times TE|\rho(u)=\pi_*(z)\}$ and
let $\pi_\pounds: \pounds^\pi E\rightarrow E$ be the mapping given by $\pi_\pounds(u, z)=\tau_E(z)$, where $\tau_E: TE\rightarrow E$ is the natural projection. Then $(\pounds^\pi E, \pi_\pounds, E)$ is a vector bundle over $E$ of rank $2n$. If we define an anchor map $\rho_\pounds:\pounds^\pi E\rightarrow TE$ on $\pounds^\pi E$, then  this vector bundle becomes a Lie algebroid with structure $(\llb., .\rrb, \rho_\pounds)$, where the Lie bracket $\llb .,.\rrb$ is given by formula (17) in \cite{peyghan}.

We introduce the vertical subbundle
\[
v\pounds^\pi E=\ker\tau_\pounds=\{(u, z)\in \pounds^\pi E|\tau_\pounds(u, z)=0\},
\]
of $\pounds^\pi E$ where $\tau_\pounds:\pounds^\pi E\rightarrow E$ is the projection onto the first factor, i.e., $\tau_\pounds(u, z)=u$. Then the elements of $v\pounds^\pi E$ are of the form $(0,z)\in E\times TE$ such that $\pi_*(z)=0$, these elements are called {\it{vertical}}. Since $\pi_*(z)=0$ and $\ker\pi_*=vE$ ($\pi_*:TE\rightarrow TM$), then we deduce that if $(0, z)$ is vertical then $z$ is a vertical vector on $E$ \cite{peyghan}.

If we consider a local base $\{e_\alpha\}$ of sections of $E$ and coordinates $(\textbf{x}^i, \textbf{y}^\alpha)$ on $E$, then we have local  coordinates $(\textbf{x}^i, \textbf{y}^\alpha, k^\alpha, z^\alpha)$ on $\pounds^\pi E$ given as follows. If $(u, z)$ is an element of $\pounds^\pi E$, then by using $\rho(u)=\pi_*(z)$, $z$ has the form
\[
z=((\rho^i_\alpha u^\alpha)\circ\pi){\frac{\partial}{\partial\textbf{x}^i}}|_v+z^\alpha{\frac{\partial}{\partial\textbf{y}^\alpha}}|_v,\ \ \ z\in T_vE.
\]
The local base $\{\mathcal{X}_\alpha, \mathcal{V}_\alpha\}$ of sections of $\pounds^\pi E$ associated to the coordinate system is given by
\[
\mathcal{X}_\alpha(v)=(e_\alpha(\pi(v)), (\rho^i_\alpha\circ\pi)\frac{\partial}{\partial\textbf{x}^i}|_v),\ \ \ \mathcal{V}_\alpha(v)=(0, \frac{\partial}{\partial\textbf{y}^\alpha}|v).
\]
If $\wt{\eta}$ is a section of $\pounds^\pi E$ by coordinate expression
\[
\wt{\eta}(x, y)=(x^i, y^\alpha, Z^\alpha(x, y), V^\alpha(x, y)),
\]
then the expression of $\wt{\eta}$ in terms of base $\{\mathcal{X}_\alpha, \mathcal{V}_\alpha\}$ is 
\[
\wt{\eta}=Z^\alpha\mathcal{X}_\alpha+V^\alpha\mathcal{V}_\alpha.
\]
\begin{lemma}\cite{peyghan} The followings are hold.
\[
\llb\mathcal{X}_\alpha, \mathcal{X}_\beta\rrb=(L^\gamma_{\alpha\beta}\circ\pi)\mathcal{X}_\gamma,\ \ \ \llb\mathcal{X}_\alpha, \mathcal{V}_\beta\rrb=0,\ \ \ \llb\mathcal{V}_\alpha, \mathcal{V}_\beta\rrb=0.
\]
\end{lemma}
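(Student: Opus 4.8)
The plan is to reduce each of the three brackets to a componentwise computation using the explicit coordinate description of the basis sections, relying on the fact that the prolongation bracket of formula (17) in \cite{peyghan} acts componentwise on sections of the ``basic'' form $(\xi\circ\pi,\ X)$, where $\xi\in\Gamma(E)$ and $X$ is a vector field on $E$ satisfying the compatibility $\rho(\xi\circ\pi)=\pi_*X$. For such sections one has
\[
\llb(\xi\circ\pi,\ X),\ (\eta\circ\pi,\ Y)\rrb=([\xi,\eta]_E\circ\pi,\ [X,Y]),
\]
the first slot being the Lie bracket in $E$ and the second the ordinary Lie bracket of vector fields on $E$. Accordingly, I would first record that, in the given coordinates, $\mathcal{X}_\alpha=(e_\alpha\circ\pi,\ (\rho^i_\alpha\circ\pi)\tfrac{\partial}{\partial\textbf{x}^i})$ and $\mathcal{V}_\alpha=(0,\ \tfrac{\partial}{\partial\textbf{y}^\alpha})$, so that $\rho_\pounds(\mathcal{X}_\alpha)=(\rho^i_\alpha\circ\pi)\partial/\partial\textbf{x}^i$ and $\rho_\pounds(\mathcal{V}_\alpha)=\partial/\partial\textbf{y}^\alpha$ are the relevant vector fields on $E$.

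For the first bracket the $E$-slot is $[e_\alpha,e_\beta]_E\circ\pi=(L^\gamma_{\alpha\beta}\circ\pi)(e_\gamma\circ\pi)$ by the very definition of the structure functions. The $TE$-slot is the Lie bracket of the two horizontal-type vector fields; since $\rho^i_\alpha\circ\pi$ and $\rho^i_\beta\circ\pi$ depend only on the base coordinates, one obtains
\[
[(\rho^i_\alpha\circ\pi)\tfrac{\partial}{\partial\textbf{x}^i},\ (\rho^j_\beta\circ\pi)\tfrac{\partial}{\partial\textbf{x}^j}]=\Big(\big(\rho^i_\alpha\tfrac{\partial\rho^j_\beta}{\partial x^i}-\rho^i_\beta\tfrac{\partial\rho^j_\alpha}{\partial x^i}\big)\circ\pi\Big)\tfrac{\partial}{\partial\textbf{x}^j}.
\]
At this point the structure equation \eqref{2}(i) rewrites the parenthesis as $\rho^j_\gamma L^\gamma_{\alpha\beta}$, so the $TE$-slot becomes $(L^\gamma_{\alpha\beta}\circ\pi)(\rho^j_\gamma\circ\pi)\partial/\partial\textbf{x}^j$. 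Matching both slots against the displayed form of $\mathcal{X}_\gamma$ yields $\llb\mathcal{X}_\alpha,\mathcal{X}_\beta\rrb=(L^\gamma_{\alpha\beta}\circ\pi)\mathcal{X}_\gamma$. This single appeal to \eqref{2}(i) is the one substantive point of the proof, and I expect it to be the main obstacle in the sense that it is where the Lie algebroid structure genuinely enters; everything else is bookkeeping. I would also note that the same identity guarantees the compatibility of the two resulting slots, as it must.

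For the remaining two brackets the computation is immediate. In $\llb\mathcal{X}_\alpha,\mathcal{V}_\beta\rrb$ the $E$-slot vanishes because $\mathcal{V}_\beta$ has zero first component, and the $TE$-slot is $[(\rho^i_\alpha\circ\pi)\partial/\partial\textbf{x}^i,\ \partial/\partial\textbf{y}^\beta]=0$ since $\partial/\partial\textbf{y}^\beta$ annihilates every function of the base coordinates alone; hence $\llb\mathcal{X}_\alpha,\mathcal{V}_\beta\rrb=0$. Likewise $\llb\mathcal{V}_\alpha,\mathcal{V}_\beta\rrb$ has vanishing $E$-slot and $TE$-slot $[\partial/\partial\textbf{y}^\alpha,\partial/\partial\textbf{y}^\beta]=0$, so it vanishes as well. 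Throughout, the resulting pairs automatically lie in $\pounds^\pi E$, since the bracket of two sections of the Lie algebroid is again a section, so no separate verification of the defining condition $\rho(u)=\pi_*(z)$ is needed.
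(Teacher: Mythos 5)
Your proposal is correct, and it follows essentially the route the paper intends: the lemma is quoted from \cite{peyghan} without proof here, and the standard argument in that source is precisely your componentwise computation of the prolongation bracket on projectable sections of the form $(\xi\circ\pi, X)$, where the only substantive input is the structure equation (\ref{2})(i) converting $\rho^i_\alpha\frac{\partial\rho^j_\beta}{\partial x^i}-\rho^i_\beta\frac{\partial\rho^j_\alpha}{\partial x^i}$ into $\rho^j_\gamma L^\gamma_{\alpha\beta}$ in the $TE$-slot of $\llb\mathcal{X}_\alpha,\mathcal{X}_\beta\rrb$. Your closing remarks on the projectability of $\mathcal{V}_\alpha$ (first slot $0=0\circ\pi$) and on the automatic compatibility of the two resulting slots dispose of the only potential loose ends, so nothing further is needed.
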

\subsubsection{Vertical and complete lifts on $\pounds^\pi E$}
The vertical lift $\eta^V$ and the complete lift $\eta^C$ of a section $\eta\in\Gamma(E)$ as the sections of $\pounds^\pi E\rightarrow E$ are defined by
\[
\eta^V(u)=(0, \eta^v(u)),\ \ \ \eta^C(u)=(\eta(\pi(u)), \eta^c(u)),\ \ \ u\in E.
\]
It is shown that vertical and complete lifts has the coordinate expression
\begin{equation}\label{esi}
\eta^V=(\eta^\alpha\circ\pi)\mathcal{V}_\alpha,\ \ \ \eta^C=(\eta^\alpha\circ\pi)\mathcal{X}_\alpha+\textbf{y}^\beta[(\rho^j_\beta\frac{\partial \eta^\alpha}{\partial x^j}-\eta^\gamma L^\alpha_{\gamma\beta})\circ\pi]\mathcal{V}_\alpha,
\end{equation}
where $\eta=\eta^\alpha e_\alpha\in\Gamma(E)$ \cite{peyghan}. Note that the first components of $\eta^V$ and $\eta^C$ are free of indeterminacy. Setting $\eta=e_\alpha$, we have
\begin{equation}
\eta^C=\mathcal{X}_\alpha-y^\beta(L^{\gamma}_{\alpha\beta}\circ\pi)\mathcal{V}_\gamma.
\end{equation}
Here we consider the pullback bundle $(\pi^*E, pr_1, E)$ of the vector bundle $(E,\pi,M)$, where
$$\pi^*E:=E\times_ME=\lbrace (u,v)\in E\times E| \pi(u)=\pi(v)\rbrace,$$
and $pr_1$ is the projection map onto the first component. We also consider the sequence
\begin{equation}\label{exact se}
0\longrightarrow E\times_ME\stackrel{i}{\rightarrow}\pounds^\pi E\stackrel{j}{\rightarrow}E\times_ME\longrightarrow0,
\end{equation}
with $j(u, z)=(\pi_E(z), Id(u))=(v, u)$, $z\in T_vE$, and $i(u, v)=(0, v^\vee_u)$ where $v^\vee_u:C^\infty(E)\rightarrow\mathbb{R}$ is defined by $v^\vee_u(F)=\frac{d}{dt}|_{t=0}F(u+tv)$. Indeed we have $v^\vee_u=\frac{d}{dt}|_{t=0}(u+tv)$. Function $J=i\circ j:\pounds^\pi E\rightarrow\pounds^\pi E$ is called the {\it {vertical
endomorphism}} ({\it{almost tangent structure}}) of $\pounds^\pi E$. For any section $\eta$ on $E$, the map
\[
\widehat{\eta}:E\rightarrow\pi^*E,
\]
defined by $\widehat{\eta}(u)=(u, \eta\circ\pi(u))$ is a section of $\pi^*\pi$, called the lift of $\eta$ into $\Gamma(\pi^*\pi)$. $\widehat{\eta}$ may be identified with the map $\eta\circ\pi:E\rightarrow E$. It is easy to see that $\{\widehat{\eta}|\eta\in\Gamma(E)\}$ generates locally the $C^\infty(E)$-module $\Gamma(\pi^*\pi)$. It is obvious that $i(\widehat{\eta})=\eta^V$, $j(\eta^V)=0$ and $j(\eta^C)=\widehat{\eta}$. Moreover, $i$ is injective and $j$ is surjective. Therefore the sequence given by (\ref{exact se}) is an exact sequence. Moreover, if $\{\mathcal{X}^\alpha, \mathcal{V}^\alpha\}$ be the corresponding dual basis of $\{\mathcal{X}_\alpha, \mathcal{V}_\alpha\}$, then $J=\mathcal{V}_\alpha\otimes\mathcal{X}^\alpha$ (see \cite{peyghan}). One can derive from the above exact sequence that
\[
\im J=\im i=v\pounds^\pi E,\ \ \ \ker J=\ker j=v\pounds^\pi E,\ \ \ J\circ J=0.
\]
The section $C : E \longrightarrow \pounds^\pi E$ given by $C:=i\circ\delta$, is called {\it Liouville} or {\it Euler section}, where $\delta:u\in E\rightarrow\delta(u)=(u, u)\in E\times_ME$. The Liouville section $C$ has the coordinate expression $C=\textbf{y}^\alpha\mathcal{V}_\alpha$, with respect to $\{\mathcal{X}_\alpha, \mathcal{V}_\alpha\}$. Section $\wt{\eta}$ of  vector bundle $(\pounds^\pi E, \pi_\pounds, E)$ is said to be {\it homogenous of degree} $r$, where $r$ is an integer, if $\llb C, \wt{\eta}\rrb=(r-1)\wt{\eta}$. A function $h:\pounds^\pi E\rightarrow \pounds^\pi E$ is called a {\it horizontal endomorphism} if $h\circ h=h$ and $\ker h=v\pounds^\pi E$. Also, $v:=Id-h$ is called {\it vertical projector associated to} $h$. Setting $h\pounds^\pi E:=\im h$  and using the fact that $\pounds^\pi E=\ker h+\im h=v\pounds^\pi E\oplus h\pounds^\pi E$, it will be deduced that
\begin{equation}\label{good}
\pounds^\pi E=v\pounds^\pi E\oplus h\pounds^\pi E.
\end{equation}
Thus one can check the following equations.
\begin{equation}\label{Jh}
(i)\ hJ=hv=Jv=0,\ \ (ii)\ v\circ v=v,\ \ (iii)\ vh=0,\ \ (iv)\ Jh=J=vJ.
\end{equation}
Moreover, $h$ has the locally expression $h=(\mathcal{X}_\beta+\mathcal{B}^\alpha_\beta\mathcal{V}_\alpha)\otimes\mathcal{X}^\beta$ (see \cite{peyghan}). Let $\eta$ be a section on $E$. The horizontal lift of $\eta$ by $h$ is a section of $\pounds^\pi E$  defined by $\eta^h=h(\eta^C)$. If we set $\delta_\alpha=e_\alpha^h$, then we have $\delta_\alpha=\mathcal{X}_\alpha+\mathcal{B}^\beta_\alpha\mathcal{V}_\beta=h(\mathcal{X}_\alpha)$. It is easy to see that $h\delta_\alpha=\delta_\alpha$, $v\delta_\alpha=0$ and
\begin{equation}\label{rho}
\rho_\pounds(\delta_\alpha)=(\rho^i_\alpha\circ\pi)\frac{\partial }{\partial \textbf{x}^i}+\mathcal{B}^\gamma_\alpha\frac{\partial}{\partial \textbf{y}^\gamma}.
\end{equation}
Moreover, $\{\delta_\alpha\}$ generate a basis of $h\pounds^\pi E$ and the frame $\{\delta_\alpha, \mathcal{V}_\alpha\}$ is a local basis of $\pounds^\pi E$ adapted to splitting (\ref{good}) which is called {\it  adapted basis}. The dual adapted basis is $\{\mathcal{X}^\alpha, \delta\mathcal{V}^\alpha\}$, where
\[
\delta\mathcal{V}^\alpha=\mathcal{V}^\alpha-\mathcal{B}^\alpha_\beta\mathcal{X}^\beta.
\]
\begin{lemma}
\cite{peyghan} The Lie brackets of the adapted basis $\{\delta_\alpha, \mathcal{V}_\alpha\}$  are
\begin{equation}\label{LieB}
\llb\delta_\alpha, \delta_\beta\rrb=(L^\gamma_{\alpha\beta}\circ\pi)\delta_\gamma+R^\gamma_{\alpha\beta}\mathcal{V}_\gamma,\ \ \ \llb\delta_\alpha, \mathcal{V}_\beta\rrb=-\frac{\partial \mathcal{B}^\gamma_\alpha}{\partial\textbf{y}^\beta}\mathcal{V}_\gamma,\ \ \ \llb\mathcal{V}_\alpha, \mathcal{V}_\beta\rrb=0,
\end{equation}
where
\begin{equation}\label{curv0}
R^\gamma_{\alpha\beta}=(\rho_\alpha^i\circ\pi)\frac{\partial{\mathcal{B}^\gamma_\beta}}{\partial{\textbf{x}^i}}
-(\rho_\beta^i\circ\pi)\frac{\partial{\mathcal{B}^\gamma_\alpha}}
{\partial{\textbf{x}^i}}+\mathcal{B}^\lambda_{\alpha}\frac{\partial{B^\gamma_\beta}}
{\partial{\textbf{y}^\lambda}}
-\mathcal{B}^\lambda_{\beta}\frac{\partial{\mathcal{B}^\gamma_\alpha}}{\partial{\textbf{y}^\lambda}}
+(L^\lambda_{\beta\alpha}\circ\pi)\mathcal{B}^\gamma_\lambda.
\end{equation}
\end{lemma}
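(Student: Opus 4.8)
The plan is to prove all three brackets by substituting the definition of the horizontal frame, $\delta_\alpha=\mathcal{X}_\alpha+\mathcal{B}^\mu_\alpha\mathcal{V}_\mu$, and expanding via $\mathbb{R}$-bilinearity together with the Leibniz rule $\llb\wt\xi, F\wt\eta\rrb=F\llb\wt\xi,\wt\eta\rrb+(\rho_\pounds(\wt\xi)F)\wt\eta$ of the prolonged bracket. This reduces every computation to the three brackets $\llb\mathcal{X}_\alpha,\mathcal{X}_\beta\rrb=(L^\gamma_{\alpha\beta}\circ\pi)\mathcal{X}_\gamma$, $\llb\mathcal{X}_\alpha,\mathcal{V}_\beta\rrb=0$, $\llb\mathcal{V}_\alpha,\mathcal{V}_\beta\rrb=0$ recorded in the preceding lemma for the natural frame $\{\mathcal{X}_\alpha,\mathcal{V}_\alpha\}$. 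The only other ingredient needed is the anchor evaluated on that frame, which I read off directly from the definitions of $\mathcal{X}_\alpha,\mathcal{V}_\alpha$ using $\rho_\pounds(u,z)=z$, namely $\rho_\pounds(\mathcal{X}_\alpha)=(\rho^i_\alpha\circ\pi)\frac{\partial}{\partial\textbf{x}^i}$ and $\rho_\pounds(\mathcal{V}_\alpha)=\frac{\partial}{\partial\textbf{y}^\alpha}$ (consistent with (\ref{rho})); these are precisely the operators that will differentiate the structure functions $\mathcal{B}^\gamma_\beta$ in $\textbf{x}^i$ and $\textbf{y}^\beta$.

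The bracket $\llb\mathcal{V}_\alpha,\mathcal{V}_\beta\rrb=0$ requires nothing. For $\llb\delta_\alpha,\mathcal{V}_\beta\rrb$ I would expand $\llb\mathcal{X}_\alpha+\mathcal{B}^\mu_\alpha\mathcal{V}_\mu,\mathcal{V}_\beta\rrb$; the $\mathcal{X}$--$\mathcal{V}$ and $\mathcal{V}$--$\mathcal{V}$ brackets vanish, so the only surviving term comes from carrying the coefficient $\mathcal{B}^\mu_\alpha$ across the bracket, giving $-(\rho_\pounds(\mathcal{V}_\beta)\mathcal{B}^\mu_\alpha)\mathcal{V}_\mu=-\frac{\partial\mathcal{B}^\gamma_\alpha}{\partial\textbf{y}^\beta}\mathcal{V}_\gamma$ after relabeling, which is exactly the claimed value.

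The substantial computation is $\llb\delta_\alpha,\delta_\beta\rrb$. Expanding $\llb\mathcal{X}_\alpha+\mathcal{B}^\mu_\alpha\mathcal{V}_\mu,\mathcal{X}_\beta+\mathcal{B}^\nu_\beta\mathcal{V}_\nu\rrb$ yields four groups of terms: the leading term $(L^\gamma_{\alpha\beta}\circ\pi)\mathcal{X}_\gamma$; two cross terms whose anchor contributions produce $(\rho^i_\alpha\circ\pi)\frac{\partial\mathcal{B}^\gamma_\beta}{\partial\textbf{x}^i}$ and $-(\rho^i_\beta\circ\pi)\frac{\partial\mathcal{B}^\gamma_\alpha}{\partial\textbf{x}^i}$ (the opposite signs coming from antisymmetry of the bracket in the second cross term); and the $\mathcal{V}$--$\mathcal{V}$ term, whose two Leibniz contributions give $\mathcal{B}^\lambda_\alpha\frac{\partial\mathcal{B}^\gamma_\beta}{\partial\textbf{y}^\lambda}-\mathcal{B}^\lambda_\beta\frac{\partial\mathcal{B}^\gamma_\alpha}{\partial\textbf{y}^\lambda}$. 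The decisive final step is to rewrite the leading term back in the adapted frame through $\mathcal{X}_\gamma=\delta_\gamma-\mathcal{B}^\sigma_\gamma\mathcal{V}_\sigma$: this leaves $(L^\gamma_{\alpha\beta}\circ\pi)\delta_\gamma$ as the horizontal part and injects an extra vertical term $-(L^\lambda_{\alpha\beta}\circ\pi)\mathcal{B}^\gamma_\lambda\mathcal{V}_\gamma$, which by the antisymmetry $L^\lambda_{\alpha\beta}=-L^\lambda_{\beta\alpha}$ becomes the final summand $(L^\lambda_{\beta\alpha}\circ\pi)\mathcal{B}^\gamma_\lambda$ of $R^\gamma_{\alpha\beta}$ in (\ref{curv0}). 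Collecting the coefficients of $\mathcal{V}_\gamma$ then reproduces $R^\gamma_{\alpha\beta}$ exactly.

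I expect the main obstacle to be purely the index bookkeeping rather than any conceptual difficulty: keeping the dummy indices $\mu,\nu,\lambda,\gamma$ consistent across the four term-groups, tracking the antisymmetry signs in the two cross terms and in the $\mathcal{V}$--$\mathcal{V}$ term, and—most easily overlooked—correctly accounting for both the sign and the $L$-dependent term generated by the change from $\mathcal{X}_\gamma$ to the adapted basis, which is what forces the appearance of $(L^\lambda_{\beta\alpha}\circ\pi)\mathcal{B}^\gamma_\lambda$ in the curvature coefficient.
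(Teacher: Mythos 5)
Your proposal is correct, and it is essentially the only proof available: the paper states this lemma without proof, importing it from the reference \cite{peyghan}, and your direct expansion of $\delta_\alpha=\mathcal{X}_\alpha+\mathcal{B}^\mu_\alpha\mathcal{V}_\mu$ via bilinearity, the Leibniz rule $\llb\wt{\xi},F\wt{\eta}\rrb=F\llb\wt{\xi},\wt{\eta}\rrb+(\rho_\pounds(\wt{\xi})F)\wt{\eta}$, the natural-frame brackets, and the anchors $\rho_\pounds(\mathcal{X}_\alpha)=(\rho^i_\alpha\circ\pi)\frac{\partial}{\partial\textbf{x}^i}$, $\rho_\pounds(\mathcal{V}_\alpha)=\frac{\partial}{\partial\textbf{y}^\alpha}$ is exactly the standard computation behind it, including the decisive step of rewriting $\mathcal{X}_\gamma=\delta_\gamma-\mathcal{B}^\sigma_\gamma\mathcal{V}_\sigma$ and using $L^\lambda_{\alpha\beta}=-L^\lambda_{\beta\alpha}$ to produce the summand $(L^\lambda_{\beta\alpha}\circ\pi)\mathcal{B}^\gamma_\lambda$ of $R^\gamma_{\alpha\beta}$. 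Note only that the symbol $B^\gamma_\beta$ appearing in the third term of the paper's displayed formula for $R^\gamma_{\alpha\beta}$ is a typo for $\mathcal{B}^\gamma_\beta$, which your computation in effect corrects.
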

Thus, one can immediately check that $h$ has the following coordinate expression with respect to the adapted basis
\begin{equation}\label{hF}
 h=\delta_\alpha\otimes\mathcal{X}^\alpha.
\end{equation}
\begin{lemma}
\cite{peyghan} The followings are hold.
\[
v\mathcal{V}_\alpha=\mathcal{V}_\alpha,\ \ \ v\mathcal{X}_\alpha=-\mathcal{B}^\beta_\alpha\mathcal{V}_\beta.
\]
\end{lemma}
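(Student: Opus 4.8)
The plan is to compute the action of $h$ on the basis sections $\mathcal{V}_\alpha$ and $\mathcal{X}_\alpha$ and then read off $v=Id-h$ on each. I would first treat $\mathcal{V}_\alpha$ using the structural facts already recorded in the excerpt: since $\ker h=v\pounds^\pi E$ and the vertical subbundle $v\pounds^\pi E$ is generated by the sections $\mathcal{V}_\alpha$, each $\mathcal{V}_\alpha$ lies in $\ker h$, so $h\mathcal{V}_\alpha=0$ and hence $v\mathcal{V}_\alpha=\mathcal{V}_\alpha-h\mathcal{V}_\alpha=\mathcal{V}_\alpha$. This yields the first identity at once.

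For the second identity I would invoke the relation $\delta_\alpha=h(\mathcal{X}_\alpha)$ together with the local form $\delta_\alpha=\mathcal{X}_\alpha+\mathcal{B}^\beta_\alpha\mathcal{V}_\beta$, both displayed just before the statement. Then $v\mathcal{X}_\alpha=\mathcal{X}_\alpha-h\mathcal{X}_\alpha=\mathcal{X}_\alpha-\delta_\alpha=-\mathcal{B}^\beta_\alpha\mathcal{V}_\beta$, as required. A fully explicit alternative is to substitute the local expression $h=(\mathcal{X}_\beta+\mathcal{B}^\gamma_\beta\mathcal{V}_\gamma)\otimes\mathcal{X}^\beta$ and evaluate on each basis section using the dual pairings $\mathcal{X}^\beta(\mathcal{V}_\alpha)=0$ and $\mathcal{X}^\beta(\mathcal{X}_\alpha)=\delta^\beta_\alpha$: the first pairing annihilates the whole sum, recovering $h\mathcal{V}_\alpha=0$, while the second collapses it to the single term $\mathcal{X}_\alpha+\mathcal{B}^\gamma_\alpha\mathcal{V}_\gamma=\delta_\alpha$, recovering $h\mathcal{X}_\alpha=\delta_\alpha$. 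Subtracting from the identity section reproduces both formulas.

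I do not expect any genuine obstacle here, since the statement is a direct consequence of the definition of the vertical projector $v=Id-h$ and the already-established local form of $h$. The only point that requires a moment's care is the bookkeeping of the dual-basis contractions, namely that $\mathcal{X}^\beta$ annihilates the $\mathcal{V}$-directions; once this is observed, both formulas follow by a purely algebraic substitution.
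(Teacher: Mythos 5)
Your proof is correct. The paper itself offers no proof of this lemma --- it is quoted from \cite{peyghan} --- but your derivation is exactly the computation implicit in the surrounding text: since $\ker h=v\pounds^\pi E$ is locally spanned by the $\mathcal{V}_\alpha$ and $h\mathcal{X}_\alpha=\delta_\alpha=\mathcal{X}_\alpha+\mathcal{B}^\beta_\alpha\mathcal{V}_\beta$, the identities follow immediately from $v=Id-h$, and your dual-basis check of $h\mathcal{V}_\alpha=0$, $h\mathcal{X}_\alpha=\delta_\alpha$ via $h=(\mathcal{X}_\beta+\mathcal{B}^\gamma_\beta\mathcal{V}_\gamma)\otimes\mathcal{X}^\beta$ closes the one point that needed verification.
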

\section{Derivative along projectable sections}
In this section considering projectable sections, a derivative along them is introduced.

A section $S$ of the vector bundle $(\pounds^\pi E, \pi_\pounds, E)$ is said to be a {\it semispray} if it satisfies
the condition $J(S)=C$. It is easy to see that local form of a semispray $S$ is
\begin{equation}\label{semispray}
S={\textbf{y}}^\alpha\mathcal{X}_\alpha+S^\alpha\mathcal{V}_\alpha.
\end{equation}
Moreover if $S$ is homogeneous of degree 2, i.e., $\llb C,S\rrb=S$, then we call it {\it spray}. It is also easy to see that $S$ is spray if and only if $2S^\beta=y^\alpha\dfrac{\partial S^\beta}{\partial \textbf{y}^\alpha}$.

We can deduce the following exact sequence from the exact sequence (\ref{exact se})
\begin{equation}\label{exact2}
0\longrightarrow \Gamma(\pi^*\pi)\stackrel{i}{\rightarrow}\Gamma(\pounds^\pi E)\stackrel{j}{\rightarrow}\Gamma(\pi^*\pi)\longrightarrow0.
\end{equation}
A right splitting of the exact sequence (\ref{exact2}), is an \textit{Ehresmann connection}. In other words any smooth $C^{\infty}(E)$-linear mapping $\mathcal{H}$ such that $j\circ \mathcal{H}=1_{\Gamma(\pounds^\pi E))}$, is an Ehresmann connection.

The vertical mapping $\mathcal{V}$ associated to $\mathcal{H}$ is a left splitting of (\ref{exact2}) that satisfies in $ker(\mathcal{V})=Im(\mathcal{H})$. Moreover, we have $\mathcal{V}(\mathcal{V}_\alpha)=\widehat{e_\alpha}$ and $\mathcal{V}(\mathcal{X}_\alpha)=-\mathcal{B}^{\beta}_{\alpha}\widehat{e_\beta}$.

It is easy to see that $h=\mathcal{H}\circ j$ and $v=i\circ\mathcal{V}=1_{\Gamma(\pounds^\pi E)}-h$ are the horizontal endomorphism and the vertival projection on $\pounds^\pi E$, respectively. Moreover, we have $\mathcal{H}(\widehat{\eta})=h(\eta^C)=\eta^h$. The Ehresmann connection $\mathcal{H}$ is said to be \textit{homogeneous} if $\llb C,\eta^h\rrb=0$ for all $\eta \in \Gamma(E)$.

We can derive a homogeneous Ehresmann connection $\mathcal{H}$ in $E$ from a spray $S$ on $E$ such that for any $\eta\in \Gamma(E)$
$$\eta^h=\mathcal{H}\widehat{\eta}=\dfrac{1}{2}(\eta^C+\llb \eta^V,S\rrb).$$
This Ehresmann connection is called the \textit{Berwald connection}. In this manner, it will bee seen the following (see \cite{peyghan})
\begin{equation}\label{mokh}
2\mathcal{B}^\gamma_\alpha=\dfrac{\partial S^\gamma}{\partial {\textbf{y}}^\alpha}-y^\beta(L^\gamma_{\alpha\beta}\circ\pi).
\end{equation}

We define the \textit{vertical differential} of a function $F \in C^{\infty}(E)$ and a section $\bar{\eta}\in E\times_ME$ as the $1$-form $\nabla^{v}F$ and $(1,1)$-tensor $\nabla^{v}\bar{\eta}$ as follows
$$\nabla^{v}F\bar{\xi}:=\rho_\pounds(i\bar{\xi})F,\qquad \bar{\xi}\in E\times_ME,$$
and
\begin{equation}\label{aa}
\nabla^{v}\bar{\eta}(\bar{\xi}):=\nabla^{v}_{\bar{\xi}}\bar{\eta}:=j\llb i\bar{\xi},\wt{\sigma}\rrb; \qquad \bar{\xi}\in E\times_ME, \quad \wt{\sigma} \in \pounds^\pi E, \quad j\wt{\sigma}=\bar{\eta}.
\end{equation}
It is easy to see that
$$\nabla^{v}F\widehat{e_\alpha}=\dfrac{\partial F}{\partial \textbf{y}^\alpha}.$$
For an $\eta\in\Gamma(E)$, we define a $(1,1)$-tensor field by
\begin{equation}
 [J,\eta]^{F-N}\xi:=\llb J\xi,\eta\rrb-J\llb\xi,\eta\rrb,  \qquad \xi\in\Gamma(E).
\end{equation}
In particular with acting on $\eta^C$ and $\eta^V$ we have the following
\begin{equation}
 [J,\eta^C]^{F-N}= [J,\eta^V]^{F-N}=0.
\end{equation}
In terms of Ehresmann connection, we can rewrite
\begin{equation}
\nabla^{v}_{\bar{\xi}}\bar{\eta}=j\llb i\bar{\xi},\mathcal{H}\bar{\eta}\rrb.
\end{equation}
Setting $\bar{\xi}=\bar{\xi}^{\alpha}\widehat{e_{\alpha}}$ and $\bar{\eta}=\bar{\eta}^{\beta}\widehat{e_{\beta}}$, one can see that
\begin{equation}\label{l1}
\nabla^{v}_{\bar{\xi}}\bar{\eta}=\bar{\xi}^{\alpha}\dfrac{\partial \bar{\eta}^{\beta}}{\partial {\textbf{y}}^{\alpha}}j(\delta_{\beta})=\bar{\xi}^{\alpha}\dfrac{\partial \bar{\eta}^{\beta}}{\partial {\textbf{y}}^{\alpha}}\widehat{e}_\beta.
\end{equation}
Using this equation, one can deduce the following
\begin{equation}\label{20}
\nabla^{v}_{\bar{\xi}}\widehat{\eta}=0.
\end{equation}
For any Ehresmann connection $\mathcal{H}$, similar to the vertical defferential we can define the \textit{$h$-Berwald defferential} $\nabla^h$ as follows
\begin{equation}
\nabla^hF(\bar{\xi}):=\rho_\pounds(\mathcal{H}\bar{\xi})F,
\end{equation}
and
\begin{equation}
\nabla^h\bar{\eta}(\bar{\xi}):=\nabla^{h}_{\bar{\xi}}\bar{\eta}:=\mathcal{V}\llb\mathcal{H}\bar{\xi}, i\bar{\eta} \rrb.
\end{equation}
Using expression of $\rho(\delta_\alpha)$, we can compute
\begin{equation}
\nabla^hF(\widehat{e_\alpha})=\rho_{\alpha}^{i}\dfrac{\partial F}{\partial \textbf{x}^i}+\mathcal{B}_{\alpha}^{\gamma}\dfrac{\partial F}{\partial {\textbf{y}}^\gamma}.
\end{equation}
Moreover setting $\bar{\xi}=\bar{\xi}^{\alpha}\widehat{e_\alpha}$ and $\bar{\eta}=\bar{\eta}^{\alpha}\widehat{e_\alpha}$, we have
\begin{align}\label{Gav1}
\nabla^{h}_{\bar{\xi}}\bar{\eta}&=\mathcal{V}\llb \mathcal{H}(\bar{\xi}^{\alpha}\widehat{e_\alpha}), i(\bar{\eta}^{\beta}\widehat{e_\beta})\rrb=\mathcal{V}\llb \bar{\xi}^\alpha\delta_\alpha,\bar{\eta}^\beta\mathcal{V}_\beta\rrb\nonumber\\
&=\mathcal{V}\lbrace \bar{\xi}^\alpha \bar{\eta}^\beta\llb\delta_\alpha, \mathcal{V}_\beta\rrb+\bar{\xi}^\alpha(\rho(\delta_\alpha)\bar{\eta}^\beta)\mathcal{V}_\beta-\bar{\eta}^\beta(\rho(\mathcal{V_\beta})\bar{\xi}^\alpha)\delta_\alpha\rbrace
\nonumber\\
&=-\bar{\xi}^\alpha \bar{\eta}^\beta\dfrac{\partial {\mathcal{B}}^{\gamma}_{\alpha}}{\partial {\textbf{y}}^\beta}\mathcal{V}_\gamma+\bar{\xi}^\alpha(\rho^{i}_{\alpha}\circ\pi)\dfrac{\partial \bar{\eta}^\beta}{\partial \textbf{x}^i}\mathcal{V}_\beta+\bar{\xi}^\alpha{\mathcal{B}}^{\gamma}_{\alpha}\dfrac{\partial \bar{\eta}^\beta}{\partial {\textbf{y}}^\gamma}\mathcal{V}_\beta\nonumber\\
&=\lbrace-\bar{\xi}^\alpha \bar{\eta}^\beta\dfrac{\partial {\mathcal{B}}^{\gamma}_{\alpha}}{\partial {\textbf{y}}^\beta}+\bar{\xi}^\alpha(\rho^{i}_{\alpha}\circ\pi)\dfrac{\partial \bar{\eta}^\gamma}{\partial \textbf{x}^i}+\bar{\xi}^\alpha{\mathcal{B}}^{\beta}_{\alpha}\dfrac{\partial \bar{\eta}^\gamma}{\partial {\textbf{y}}^\beta}\rbrace\mathcal{V}_\gamma.
\end{align}
An $\mathbb{R}$-linear mapping $D:\mathfrak{T}(\pi^*\pi)\longrightarrow\mathfrak{T}(\pi^*\pi)$ preserving type and commutes with contraction and satisfied the
$$D(A\otimes B)=(DA)\otimes B+A\otimes(DB); \qquad A,B\in \mathfrak{T}(\pi_\pounds),$$
where $\mathfrak{T}(\pi^*\pi)$ denotes the family of all tensor fields of the bundle $\pi^*\pi$, is called a \textit{derivation along $\pi_\pounds$}. If we have enough data on $C^{\infty}(E)$ and $\Gamma(E)$, then we can build a tensor derivation\cite{onil}.
\begin{lemma}\label{Willmor}
Any derivation of $\mathfrak{T}(\pi^*\pi)$ is completely determined by its action on $C^{\infty}(E)$ under the anchor map and its action on $\Gamma(\pi^*\pi)$. Converesly, given a section $\wt{\eta}\in \Gamma(\pounds^\pi E)$ and an $\mathbb{R}$-linear mapping $D_0: \Gamma(\pi^*\pi)\longrightarrow \Gamma(\pi^*\pi)$
 such that
$$D_0(F\bar{\xi})=(\rho_\pounds(\wt{\eta})(F))\bar{\xi}+FD_0\bar{\xi}, \qquad \bar{\xi}\in \Gamma(\pi^*\pi), \qquad F\in C^{\infty}(E),$$
there exist a unique derivation $D$ along $\pi_\pounds$ such that $D\upharpoonleft C^{\infty}(E)=\rho_\pounds(\wt{\eta})$ and $D\upharpoonleft \Gamma(\pi^*\pi)=D_0$.
\end{lemma}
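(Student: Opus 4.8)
The plan is to treat this as the Lie-algebroid version of the classical tensor-derivation (Willmore) theorem, building or recovering $D$ layer by layer --- functions, then sections, then one-forms, then all tensors by the product rule. For the \emph{determination} part, a derivation $D$ is pinned down on every tensor once its action on $C^\infty(E)$ and on $\Gamma(\pi^*\pi)$ is known, because its value on one-forms is forced by the contraction axiom: for $\omega\in\Gamma((\pi^*\pi)^*)$ and any $\bar{\xi}\in\Gamma(\pi^*\pi)$ the pairing $\omega(\bar{\xi})$ lies in $C^\infty(E)$, so commuting with contraction yields
$$(D\omega)(\bar{\xi})=D(\omega(\bar{\xi}))-\omega(D\bar{\xi}),$$
whose right-hand side involves only the prescribed data. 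Since every element of $\mathfrak{T}(\pi^*\pi)$ is a finite sum of tensor products of functions, sections and one-forms, the Leibniz rule $D(A\otimes B)=(DA)\otimes B+A\otimes(DB)$ then determines $D$ uniquely.

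For the \emph{existence} part I would reverse this and construct $D$ from the data $(\wt{\eta},D_0)$. Set $D\upharpoonleft C^\infty(E):=\rho_\pounds(\wt{\eta})$ and $D\upharpoonleft\Gamma(\pi^*\pi):=D_0$, and on one-forms define $(D\omega)(\bar{\xi}):=\rho_\pounds(\wt{\eta})(\omega(\bar{\xi}))-\omega(D_0\bar{\xi})$. The first thing to verify is that this $D\omega$ is genuinely a one-form, i.e. $C^\infty(E)$-linear in $\bar{\xi}$; a short computation using the assumed Leibniz property of $D_0$ shows $(D\omega)(F\bar{\xi})=F\,(D\omega)(\bar{\xi})$, the two occurrences of $(\rho_\pounds(\wt{\eta})F)\,\omega(\bar{\xi})$ cancelling. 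One then extends $D$ to all of $\mathfrak{T}(\pi^*\pi)$ by the tensor-product Leibniz rule.

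Finally it must be checked that the constructed $D$ is a genuine derivation: that it preserves type (immediate, since the construction respects each factor), that it commutes with contraction, and that the Leibniz extension is consistent, i.e. independent of how a tensor is written as a sum of decomposables. I expect this consistency, together with contraction-compatibility, to be the main obstacle. The cleanest route is to fix the local frame $\{\widehat{e_\alpha}\}$ of $\Gamma(\pi^*\pi)$ and its dual coframe, to read off $D$ on an arbitrary tensor from its values on these generators via the product rule, and then to confirm directly that the resulting coordinate expression is well defined and satisfies the three axioms on decomposable tensors; $\mathbb{R}$-linearity propagates them to all tensors. Uniqueness of $D$ is then immediate from the determination part.
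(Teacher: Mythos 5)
The paper gives no proof of this lemma at all---it is imported from the classical tensor-derivation construction theorem (the citation to O'Neill just before the statement), and your proposal reconstructs exactly that standard argument: the contraction axiom forces $(D\omega)(\bar{\xi})=D(\omega(\bar{\xi}))-\omega(D_0\bar{\xi})$ on one-forms, $C^\infty(E)$-linearity of $D\omega$ follows from the assumed Leibniz property of $D_0$, and $D$ extends to all of $\mathfrak{T}(\pi^*\pi)$ by the tensor Leibniz rule. Your outline is correct and coincides with the approach the paper implicitly relies on; the one point to make explicit in a full write-up is the locality of tensor derivations (a bump-function argument showing $DA$ near a point depends only on $A$ near that point), since that is what legitimizes both the frame-by-frame definition via $\{\widehat{e_\alpha}\}$ and the decomposition of an arbitrary tensor into sums of products of generators, after which the consistency and contraction checks you flag are routine coordinate computations.
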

A section $\wt{\xi}: E \longrightarrow \pounds^\pi(E)$ is said to be \textit{projectable} if there is a section $X: M\longrightarrow E$  such that $\tau_\pounds\circ \wt{\xi}=X\circ\pi$. It is easy to see that both $\xi^V$ and $\xi^C$ are projectable when $\xi\in\Gamma(E)$.
\begin{lemma}
If $\wt{\xi}: \pounds^\pi E\longrightarrow E$ be projectable, then there exist a unique derivation $\widetilde{\mathcal{L}}_{\wt{\xi}}$ along $\pi_\pounds$ such that
\begin{equation}\label{shahin}
\widetilde{\mathcal{L}}_{\wt{\xi}} F:=\rho_\pounds({\wt{\xi}}) F, \qquad F\in C^{\infty}(E),
\end{equation}
\begin{equation}\label{wd}
\widetilde{\mathcal{L}}_{\wt{\xi}}\bar{\eta}:=i^{-1}\llb {\wt{\xi}}, i\bar{\eta}\rrb, \qquad \bar{\eta}\in \Gamma(\pi^*\pi).
\end{equation}
\end{lemma}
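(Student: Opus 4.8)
The plan is to realise $\widetilde{\mathcal{L}}_{\wt\xi}$ through Lemma \ref{Willmor}, which reduces the construction of a derivation along $\pi_\pounds$ to supplying a section of $\pounds^\pi E$ and an $\mathbb{R}$-linear operator $D_0$ on $\Gamma(\pi^*\pi)$ that is Leibniz over the anchor of that section. Here the section is $\wt\xi$ itself, and the natural candidate is $D_0\bar\eta:=i^{-1}\llb\wt\xi, i\bar\eta\rrb$. Thus the whole task is to verify that this $D_0$ is well defined, $\mathbb{R}$-linear and Leibniz over $\rho_\pounds(\wt\xi)$, after which the existence and uniqueness of $\widetilde{\mathcal{L}}_{\wt\xi}$ follow at once from Lemma \ref{Willmor}.

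First I would settle well-definedness, which is the only step that uses projectability and which I expect to be the main obstacle. Since $i$ is injective with $\im i=\ker j=v\pounds^\pi E$, the formula $i^{-1}\llb\wt\xi, i\bar\eta\rrb$ makes sense precisely when $\llb\wt\xi, i\bar\eta\rrb$ is vertical, i.e. $j\llb\wt\xi, i\bar\eta\rrb=0$. For a projectable $\wt\xi$ the condition $\tau_\pounds\circ\wt\xi=X\circ\pi$ forces its $\mathcal{X}$-coefficients to be basic, so locally $\wt\xi=(X^\alpha\circ\pi)\mathcal{X}_\alpha+V^\alpha\mathcal{V}_\alpha$; writing $i\bar\eta=\bar\eta^\beta\mathcal{V}_\beta$ and expanding the bracket with the Leibniz rule together with $\llb\mathcal{X}_\alpha,\mathcal{V}_\beta\rrb=\llb\mathcal{V}_\alpha,\mathcal{V}_\beta\rrb=0$, the only potential horizontal contribution is the term $-(\rho_\pounds(\mathcal{V}_\beta)(X^\alpha\circ\pi))\mathcal{X}_\alpha$. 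Because $\rho_\pounds(\mathcal{V}_\beta)=\partial/\partial\textbf{y}^\beta$ kills the basic functions $X^\alpha\circ\pi$, this term vanishes, so $\llb\wt\xi, i\bar\eta\rrb$ lies in $\im i$ and $i^{-1}$ applies. It is exactly here that non-projectability would leave a surviving $\mathcal{X}$-component and break the construction.

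The remaining points are routine. $\mathbb{R}$-linearity of $D_0$ is immediate from the $\mathbb{R}$-bilinearity of $\llb\cdot,\cdot\rrb$ and of $i,i^{-1}$. For the Leibniz property, given $F\in C^\infty(E)$ I would use the $C^\infty(E)$-linearity of $i$ and the Leibniz identity of the algebroid bracket to get
\[
D_0(F\bar\eta)=i^{-1}\llb\wt\xi, F\,i\bar\eta\rrb=i^{-1}\big(F\llb\wt\xi, i\bar\eta\rrb+(\rho_\pounds(\wt\xi)F)\,i\bar\eta\big)=F\,D_0\bar\eta+(\rho_\pounds(\wt\xi)F)\bar\eta,
\]
which is precisely the hypothesis of Lemma \ref{Willmor} with $\wt\eta=\wt\xi$. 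Applying that lemma then produces a unique derivation $\widetilde{\mathcal{L}}_{\wt\xi}$ along $\pi_\pounds$ with $\widetilde{\mathcal{L}}_{\wt\xi}\upharpoonleft C^\infty(E)=\rho_\pounds(\wt\xi)$ and $\widetilde{\mathcal{L}}_{\wt\xi}\upharpoonleft\Gamma(\pi^*\pi)=D_0$, which are exactly \eqref{shahin} and \eqref{wd}, and uniqueness of $\widetilde{\mathcal{L}}_{\wt\xi}$ is inherited directly from the uniqueness clause of Lemma \ref{Willmor}.
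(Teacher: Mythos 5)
Your proposal is correct and follows essentially the same route as the paper's proof: construct $D_0\bar\eta=i^{-1}\llb\wt\xi,i\bar\eta\rrb$, verify the Leibniz rule over $\rho_\pounds(\wt\xi)$ via the bracket identity, and invoke the reconstruction lemma for derivations along $\pi_\pounds$ to get existence and uniqueness. The only difference is that where the paper merely asserts that projectability of $\wt\xi$ and verticality of $i\bar\eta$ make $\llb\wt\xi,i\bar\eta\rrb$ vertical, you substantiate this with the local computation showing the basic $\mathcal{X}$-coefficients of $\wt\xi$ are annihilated by $\rho_\pounds(\mathcal{V}_\beta)=\partial/\partial\textbf{y}^\beta$, which is a welcome strengthening rather than a departure.
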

\begin{proof}
Since ${\wt{\xi}}$ is projectable and $i\bar{\eta}$ is vertical, then $\llb{\wt{\xi}},i\bar{\eta}\rrb$ is vertical too. Therefore (\ref{wd}) is a well defined equivalency. Now if $F\in C^{\infty}(E)$, then
\begin{align*}
\widetilde{\mathcal{L}}_{\wt{\xi}} F\bar{\eta}&:=i^{-1}\llb {\wt{\xi}}, i(F\bar{\eta})\rrb=i^{-1}\llb {\wt{\xi}},Fi \bar{\eta}\rrb=i^{-1}(F\llb {\wt{\xi}}, i\bar{\eta}\rrb+\rho_\pounds({\wt{\xi}})(F)(i\bar{\eta}))\\
&=F\widetilde{\mathcal{L}}_{\wt{\xi}}\bar{\eta}+\rho_\pounds({\wt{\xi}})(F)\bar{\eta}.
\end{align*}
Now using lemma (\ref{Willmor}), we prove the assertion.
\end{proof}
We call the $\widetilde{\mathcal{L}}_{\wt{\xi}}$ as \textit{Lie derivation along $\pi$} with respect to ${\wt{\xi}}$. Note that if ${\wt{\xi}}$ be projectable, then $\llb {\wt{\xi}}, i \bar{\eta}\rrb=v\llb {\wt{\xi}}, i \bar{\eta}\rrb=i\circ \mathcal{V}\llb {\wt{\xi}}, i \bar{\eta}\rrb$. Thus we can write (\ref{wd}) as
\begin{equation}\label{eshgh}
\widetilde{\mathcal{L}}_{\wt{\xi}}\bar{\eta}=\mathcal{V}\llb {\wt{\xi}}, i \bar{\eta}\rrb.
\end{equation}
Setting $\wt{\xi}={{\wt{\xi}}}^\alpha\delta_\alpha+{\wt{\eta}}^\alpha\mathcal{V}_\alpha$ and $\bar{\sigma}=\bar {\sigma}^\alpha \widehat{e_\alpha}$, we can express (\ref{shahin}) and (\ref{eshgh}) as follow
\begin{equation}
\widetilde{\mathcal{L}}_{\wt{\xi}} F={{\wt{\xi}}}^\alpha(\rho^{i}_{\alpha}\circ\pi)\dfrac{\partial F}{\partial \textbf{x}^i}+({{\wt{\xi}}}^\alpha{\mathcal{B}}^{\gamma}_{\alpha}+{\wt{\eta}}^\gamma)\dfrac{\partial F}{\partial {\textbf{y}}^\gamma};
\end{equation}
\begin{align}\label{jafari}
\widetilde{\mathcal{L}}_{\wt{\xi}}\bar{\sigma}&=\mathcal{V}\llb{{\wt{\xi}}}^\alpha\delta_\alpha+{\wt{\eta}}^\alpha\mathcal{V}_\alpha,
\bar{\sigma}^\beta i(\widehat{e_\beta})\rrb\\ \nonumber
&=\lbrace{{\wt{\xi}}}^\alpha(\rho^{i}_{\alpha}\circ\pi)\dfrac{\partial \bar{\sigma}^\gamma}{\partial \textbf{x}^i}-{{\wt{\xi}}}^\alpha \bar{\sigma}^\beta\dfrac{\partial \mathcal{B}^{\gamma}_{\alpha}}{\partial {\textbf{y}}^\beta}\\ \nonumber
&+{{\wt{\xi}}}^\alpha\mathcal{B}^{\beta}_{\alpha}\dfrac{\partial \bar{\sigma}^\gamma}{\partial {\textbf{y}}^\beta}+{\wt{\eta}}^\alpha\dfrac{\partial \bar{\sigma}^\gamma}{\partial {\textbf{y}}^\alpha}-\bar{\sigma}^\beta\dfrac{\partial {\wt{\eta}}^\gamma}{\partial {\textbf{y}}^\beta}\rbrace\widehat{e_\gamma}.
\end{align}
For future computations, we need the following lemma.
\begin{lemma}
If $\eta$ be a section of $E$, Then
\begin{equation}
\widetilde{\mathcal{L}}_{\eta^C}\widehat{\xi}=\widehat{\llb \eta,\xi\rrb}=\widehat{\mathcal{L}_\eta \xi};
\end{equation}
\begin{equation}\label{sufi}
\widetilde{\mathcal{L}}_{\eta^V}\bar{\xi}=\nabla^{v}_{\widehat{\eta}}\bar{\xi}.
\end{equation}
Moreover, if $\mathcal{H}$ is an Ehresmann connection on $\pounds^\pi E$, then
\begin{equation}\label{Gav}
\widetilde{\mathcal{L}}_{\eta^h}\bar{\xi}=\nabla^{h}_{\widehat{\eta}}\bar{\xi}.
\end{equation}
\end{lemma}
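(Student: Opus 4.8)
The plan is to verify all three identities by a single mechanism: express the relevant lift of $\eta$ in the adapted basis $\{\delta_\alpha,\mathcal{V}_\alpha\}$, feed it into the master coordinate formula \eqref{jafari} for $\widetilde{\mathcal{L}}_{\wt{\xi}}$, and compare the output with the coordinate descriptions \eqref{l1} of $\nabla^{v}$ and \eqref{Gav1} of $\nabla^{h}$. The decisive bookkeeping is to track which slots of \eqref{jafari} survive in each case. In \eqref{sufi} and \eqref{Gav} the target $\bar{\xi}$ is an arbitrary section of $\pi^*\pi$, so its components $\bar{\xi}^\gamma$ genuinely depend on the fibre coordinates $\textbf{y}^\alpha$; by contrast, in the first identity the target is the special lift $\widehat{\xi}$, whose components $\xi^\gamma\circ\pi$ are pulled back from $M$ and are therefore $\textbf{y}$-independent. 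This difference is exactly what dictates which terms drop out.

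First I would dispatch the two easy cases. For \eqref{sufi} I use $\eta^V=(\eta^\alpha\circ\pi)\mathcal{V}_\alpha$ from \eqref{esi}; in the notation $\wt{\xi}=\wt{\xi}^\alpha\delta_\alpha+\wt{\eta}^\alpha\mathcal{V}_\alpha$ of \eqref{jafari} this means $\wt{\xi}^\alpha=0$ and $\wt{\eta}^\alpha=\eta^\alpha\circ\pi$. The vanishing of the horizontal part kills the first three summands of \eqref{jafari}, and the $\textbf{y}$-independence of $\wt{\eta}^\gamma=\eta^\gamma\circ\pi$ kills the last summand $-\bar{\xi}^\beta\,\partial\wt{\eta}^\gamma/\partial\textbf{y}^\beta$; what remains is $(\eta^\alpha\circ\pi)(\partial\bar{\xi}^\gamma/\partial\textbf{y}^\alpha)\,\widehat{e_\gamma}$, which is precisely $\nabla^{v}_{\widehat{\eta}}\bar{\xi}$ by \eqref{l1}. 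For \eqref{Gav} I use $\eta^h=h(\eta^C)=(\eta^\alpha\circ\pi)\delta_\alpha$, which follows from \eqref{hF} together with $h\mathcal{X}_\alpha=\delta_\alpha$ and $h\mathcal{V}_\alpha=0$; now $\wt{\eta}^\alpha=0$, so the last two summands of \eqref{jafari} vanish, and the three remaining summands reproduce the three terms of \eqref{Gav1} with its first slot set to $\widehat{\eta}$ and its second slot to $\bar{\xi}$, i.e. $\nabla^{h}_{\widehat{\eta}}\bar{\xi}$.

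The first identity is the real work, since $\eta^C$ is neither horizontal nor vertical. Rewriting \eqref{esi} with $\mathcal{X}_\alpha=\delta_\alpha-\mathcal{B}^\beta_\alpha\mathcal{V}_\beta$ gives $\eta^C=(\eta^\alpha\circ\pi)\delta_\alpha+\wt{\eta}^\gamma\mathcal{V}_\gamma$ with $\wt{\eta}^\gamma=-(\eta^\alpha\circ\pi)\mathcal{B}^\gamma_\alpha+\textbf{y}^\beta[(\rho^j_\beta\,\partial\eta^\gamma/\partial x^j-\eta^\nu L^\gamma_{\nu\beta})\circ\pi]$. Applying \eqref{jafari} to the target $\widehat{\xi}$ (so $\bar{\xi}^\gamma=\xi^\gamma\circ\pi$ is $\textbf{y}$-independent) discards the two summands carrying $\partial\bar{\xi}^\gamma/\partial\textbf{y}$ and leaves the three terms $(\eta^\alpha\circ\pi)(\rho^i_\alpha\circ\pi)\,\partial\bar{\xi}^\gamma/\partial\textbf{x}^i$, $\ -(\eta^\alpha\circ\pi)(\xi^\beta\circ\pi)\,\partial\mathcal{B}^\gamma_\alpha/\partial\textbf{y}^\beta$, and $\ -(\xi^\beta\circ\pi)\,\partial\wt{\eta}^\gamma/\partial\textbf{y}^\beta$. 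I expect the main obstacle to be controlling the last term: differentiating $\wt{\eta}^\gamma$ produces the piece $+(\eta^\alpha\circ\pi)(\xi^\beta\circ\pi)\,\partial\mathcal{B}^\gamma_\alpha/\partial\textbf{y}^\beta$, which must cancel the second term above, while the remaining piece contributes $-(\xi^\beta\circ\pi)[(\rho^j_\beta\,\partial\eta^\gamma/\partial x^j-\eta^\nu L^\gamma_{\nu\beta})\circ\pi]$. This cancellation of all $\mathcal{B}$-dependent contributions is the crux, and it is forced a priori because the left-hand side is manifestly independent of the choice of connection.

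Once the connection terms disappear, what remains is $\{(\eta^\alpha\rho^i_\alpha\,\partial\xi^\gamma/\partial x^i)\circ\pi-(\xi^\beta\rho^j_\beta\,\partial\eta^\gamma/\partial x^j)\circ\pi+(\eta^\alpha\xi^\beta L^\gamma_{\alpha\beta})\circ\pi\}\,\widehat{e_\gamma}$, and I finish by recognizing the bracketed expression as the $\gamma$-th component, pulled back along $\pi$, of the Lie algebroid bracket $[\eta,\xi]_E$; indeed, expanding $[\eta^\alpha e_\alpha,\xi^\beta e_\beta]_E$ with the Leibniz rule and $\rho(e_\alpha)=\rho^i_\alpha\,\partial/\partial x^i$, $[e_\alpha,e_\beta]_E=L^\gamma_{\alpha\beta}e_\gamma$ yields exactly these three terms (after relabelling $\eta^\nu\xi^\beta L^\gamma_{\nu\beta}=\eta^\alpha\xi^\beta L^\gamma_{\alpha\beta}$). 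Hence $\widetilde{\mathcal{L}}_{\eta^C}\widehat{\xi}=\widehat{[\eta,\xi]_E}=\widehat{\mathcal{L}_\eta\xi}$. As an independent check I would also note the coordinate-free route: since $i\widehat{\xi}=\xi^V$ and $\mathcal{V}\circ i=\mathrm{id}$, one has $\widetilde{\mathcal{L}}_{\eta^C}\widehat{\xi}=\mathcal{V}\llb\eta^C,\xi^V\rrb$, so the claim is equivalent to the lifted bracket identity $\llb\eta^C,\xi^V\rrb=[\eta,\xi]_E^V$, which can be obtained from the structure brackets of $\{\mathcal{X}_\alpha,\mathcal{V}_\alpha\}$ and the Leibniz rule for $\llb\cdot,\cdot\rrb$.
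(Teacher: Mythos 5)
Your proposal is correct and takes essentially the same route as the paper's proof: both substitute the coordinate expressions (\ref{esi}) of the lifts (and $\eta^h=(\eta^\alpha\circ\pi)\delta_\alpha$) into the master formula (\ref{jafari}) and match the result against (\ref{l1}) and (\ref{Gav1}). The only difference is that you spell out the cancellation of the $\mathcal{B}$-dependent terms in the $\eta^C$ case, which the paper leaves implicit.
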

\begin{proof}
Let $\eta=\eta^\alpha e_\alpha$ and $\xi=\xi^\beta e_\beta$. Then using the second equation of (\ref{esi}) and (\ref{jafari}), we obtain
$$\widetilde{\mathcal{L}}_{\eta^C}\widehat{\xi}=\{(\eta^\beta\rho^i_\beta\dfrac{\partial \xi^\alpha}{\partial x^i}-\xi^\beta\rho^i_\beta\dfrac{\partial \eta^\alpha}{\partial x^i}+\eta^\gamma \xi^\beta L^{\alpha}_{\gamma\beta})\circ\pi\rbrace\widehat{e}_\alpha=\widehat{\llb \eta,\xi\rrb}.$$
To proof (\ref{sufi}), we let $\bar{\xi}=\bar{\xi}^\alpha\widehat{e}_\alpha$. Then using the first equation of (\ref{esi}) and (\ref{jafari}) yield
$$
\widetilde{\mathcal{L}}_{\eta^V}\bar{\xi}=(\eta^\beta\circ\pi)\frac{\partial\bar{\xi}^\alpha}{\partial\textbf{y}^\beta}\widehat{e}_\alpha=\nabla^v_{\widehat{\eta}}\bar{\xi}.
$$
Now, we prove (\ref{Gav}). (\ref{Gav1}) and (\ref{jafari}) imply that
$$
\widetilde{\mathcal{L}}_{\eta^h}\bar{\xi}=\{((\eta^\beta\rho^i_\beta)\circ\pi)\frac{\partial\bar{\xi}^\alpha}{\partial\textbf{x}^i}-(\eta^\gamma\circ\pi)\bar{\xi}^\beta\frac{\partial B^\alpha_\gamma}{\partial\textbf{y}^\beta}+(\eta^\gamma\circ\pi)B^\beta_\gamma\frac{\partial\bar{\xi}^\alpha}{\partial\textbf{y}^\beta}\}\widehat{e}_\alpha=\nabla^h_{\widehat{\eta}}\bar{\xi}.
$$
\end{proof}
Thus we can write
$$\widetilde{\mathcal{L}}_{\eta^V}=\nabla^{v}_{\widehat{\eta}}, \qquad \widetilde{\mathcal{L}}_{\eta^h}=\nabla^{h}_{\widehat{\eta}}.$$
\begin{cor}
If $\eta$ be a section of $E$, Then
\begin{equation}\label{flat}
\widetilde{\mathcal{L}}_{\eta^C}\circ j=j\circ \mathcal{L}^{\flat}_{\eta^C},
\end{equation}
\begin{equation}\label{jacobi}
\widetilde{\mathcal{L}}_{\eta^C}\circ\nabla^{v}_{\widehat{\xi}}-\nabla^{v}_{\widehat{\xi}}\circ
\widetilde{\mathcal{L}}_{\eta^C}=\widetilde{\mathcal{L}}_{[\eta,\xi]^V},
\end{equation}
Further if $\mathcal{H}$ be an Ehresmann connection, then
\begin{equation}\label{ss}
\widetilde{\mathcal{L}}_{\eta^C}\circ\nabla^{h}_{\widehat{\xi}}-\nabla^{h}_{\widehat{\xi}}\circ
\widetilde{\mathcal{L}}_{\eta^C}=\widetilde{\mathcal{L}}_{\llb \eta^C,\xi^h\rrb},
\end{equation}
where $\mathcal{L}^{\flat}_{\wt{\xi}}{\wt{\eta}}$ denotes the $\llb\wt{\xi}, \wt{\eta} \rrb$.
\end{cor}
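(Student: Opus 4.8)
The plan is to reduce all three identities to a single ``bracket-to-commutator'' principle for the Lie derivation along $\pi_\pounds$: for projectable sections $\wt{\xi},\wt{\zeta}$ of $\pounds^\pi E$ one has $\widetilde{\mathcal{L}}_{\wt{\xi}}\circ\widetilde{\mathcal{L}}_{\wt{\zeta}}-\widetilde{\mathcal{L}}_{\wt{\zeta}}\circ\widetilde{\mathcal{L}}_{\wt{\xi}}=\widetilde{\mathcal{L}}_{\llb\wt{\xi},\wt{\zeta}\rrb}$. Granting this principle together with the identifications $\nabla^{v}_{\widehat{\xi}}=\widetilde{\mathcal{L}}_{\xi^V}$ and $\nabla^{h}_{\widehat{\xi}}=\widetilde{\mathcal{L}}_{\xi^h}$ supplied by (\ref{sufi}) and (\ref{Gav}), identity (\ref{jacobi}) is the instance $\wt{\xi}=\eta^C$, $\wt{\zeta}=\xi^V$, after which the bracket relation $\llb\eta^C,\xi^V\rrb=[\eta,\xi]^V$ (the $\pounds^\pi E$-analogue of the complete/vertical lift bracket formulas) rewrites the right-hand side as $\widetilde{\mathcal{L}}_{[\eta,\xi]^V}$; likewise (\ref{ss}) is the instance $\wt{\xi}=\eta^C$, $\wt{\zeta}=\xi^h$, where the right-hand side is left in the form $\widetilde{\mathcal{L}}_{\llb\eta^C,\xi^h\rrb}$ because this bracket has no simpler lifted expression. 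Identity (\ref{flat}) is treated separately using the almost tangent structure $J=i\circ j$.

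To establish the principle I would first observe that $\llb\wt{\xi},\wt{\zeta}\rrb$ is again projectable when $\wt{\xi},\wt{\zeta}$ are, so $\widetilde{\mathcal{L}}_{\llb\wt{\xi},\wt{\zeta}\rrb}$ is defined. Both sides of the asserted equality are tensor derivations along $\pi_\pounds$ (the commutator of two type-preserving, contraction-commuting derivations is again one), so by the uniqueness half of Lemma~\ref{Willmor} it suffices to compare them on $C^{\infty}(E)$ and on $\Gamma(\pi^*\pi)$. On a function $F$ the commutator collapses to $[\rho_\pounds(\wt{\xi}),\rho_\pounds(\wt{\zeta})]F$, which equals $\rho_\pounds(\llb\wt{\xi},\wt{\zeta}\rrb)F$ since $\rho_\pounds$ is a Lie-algebra homomorphism, matching $\widetilde{\mathcal{L}}_{\llb\wt{\xi},\wt{\zeta}\rrb}F$. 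On a section $\bar{\eta}$ I would apply the defining formula (\ref{wd}), $\widetilde{\mathcal{L}}_{\wt{\xi}}\bar{\eta}=i^{-1}\llb\wt{\xi},i\bar{\eta}\rrb$, to turn the commutator into $i^{-1}\bigl(\llb\wt{\xi},\llb\wt{\zeta},i\bar{\eta}\rrb\rrb-\llb\wt{\zeta},\llb\wt{\xi},i\bar{\eta}\rrb\rrb\bigr)$, and then the Jacobi identity for $\llb\cdot,\cdot\rrb$ rewrites the inner expression as $\llb\llb\wt{\xi},\wt{\zeta}\rrb,i\bar{\eta}\rrb=i\,\widetilde{\mathcal{L}}_{\llb\wt{\xi},\wt{\zeta}\rrb}\bar{\eta}$. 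This proves the principle and hence (\ref{jacobi}) and (\ref{ss}).

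For (\ref{flat}) I would argue on a section $\wt{\sigma}\in\Gamma(\pounds^\pi E)$ with $\bar{\eta}:=j\wt{\sigma}$. Since $J=i\circ j$, formula (\ref{wd}) gives $\widetilde{\mathcal{L}}_{\eta^C}(j\wt{\sigma})=i^{-1}\llb\eta^C,ij\wt{\sigma}\rrb=i^{-1}\llb\eta^C,J\wt{\sigma}\rrb$. The vanishing of the Fr\"olicher--Nijenhuis bracket, $[J,\eta^C]^{F-N}=0$, reads $\llb J\wt{\sigma},\eta^C\rrb=J\llb\wt{\sigma},\eta^C\rrb$, and antisymmetry of $\llb\cdot,\cdot\rrb$ converts this to $\llb\eta^C,J\wt{\sigma}\rrb=J\llb\eta^C,\wt{\sigma}\rrb$. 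Substituting and using $J=i\circ j$ once more yields $\widetilde{\mathcal{L}}_{\eta^C}(j\wt{\sigma})=i^{-1}J\llb\eta^C,\wt{\sigma}\rrb=j\llb\eta^C,\wt{\sigma}\rrb=j\,\mathcal{L}^{\flat}_{\eta^C}\wt{\sigma}$, which is (\ref{flat}).

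The main obstacle I anticipate is bookkeeping rather than conceptual: one must confirm that both sides of the master identity are genuine type-preserving, contraction-commuting derivations along $\pi_\pounds$ so that Lemma~\ref{Willmor} can be invoked, and one must secure the auxiliary bracket relation $\llb\eta^C,\xi^V\rrb=[\eta,\xi]^V$ on $\pounds^\pi E$ needed to put the right-hand side of (\ref{jacobi}) into the stated form. The sign manipulation in (\ref{flat}), where antisymmetry turns $[J,\eta^C]^{F-N}=0$ into the identity for $\llb\eta^C,J\wt{\sigma}\rrb$, also requires a moment's care, but it is routine once the roles of $i$, $j$ and $J=i\circ j$ are kept distinct.
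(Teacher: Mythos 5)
Your proposal is correct. For (\ref{flat}) your argument is essentially the paper's: both hinge on $\llb \eta^C, J\wt{\sigma}\rrb=J\llb \eta^C,\wt{\sigma}\rrb$ (equation (\ref{XC}), i.e.\ the vanishing of $[J,\eta^C]^{F-N}$) together with $J=i\circ j$; your use of $i^{-1}$ is the paper's $\mathcal{V}$, and these agree on vertical sections since $v=i\circ\mathcal{V}$ is the identity there. For the two commutator identities, however, your route genuinely differs in organization. The paper proves (\ref{jacobi}) and (\ref{ss}) separately: each is checked on functions via the anchor homomorphism, and then on sections by two distinct computations --- for (\ref{jacobi}) a detour through the Ehresmann connection, rewriting $\nabla^{v}_{\widehat{\xi}}\bar{\sigma}=j\llb \xi^V,\mathcal{H}\bar{\sigma}\rrb$ and invoking (\ref{flat}), (\ref{XC}) and the auxiliary relation (\ref{Gav20}) before the Jacobi identity can be applied; for (\ref{ss}) a direct computation exploiting the verticality of $\llb \xi^h,i\bar{\sigma}\rrb$ plus the Jacobi identity. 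You instead isolate the common mechanism as a single master identity $\widetilde{\mathcal{L}}_{\wt{\xi}}\circ\widetilde{\mathcal{L}}_{\wt{\zeta}}-\widetilde{\mathcal{L}}_{\wt{\zeta}}\circ\widetilde{\mathcal{L}}_{\wt{\xi}}=\widetilde{\mathcal{L}}_{\llb\wt{\xi},\wt{\zeta}\rrb}$ for projectable $\wt{\xi},\wt{\zeta}$, proved once (on functions because $\rho_\pounds$ is a Lie algebra homomorphism; on sections by the telescoping $i^{-1}\llb\wt{\xi},\llb\wt{\zeta},i\bar{\eta}\rrb\rrb$ computation plus Jacobi, legitimate because brackets of projectable sections with vertical ones stay vertical; conclusion by the determination Lemma \ref{Willmor}), and then specialize via the identifications $\nabla^{v}_{\widehat{\xi}}=\widetilde{\mathcal{L}}_{\xi^V}$ and $\nabla^{h}_{\widehat{\xi}}=\widetilde{\mathcal{L}}_{\xi^h}$, which the paper itself records after (\ref{sufi}) and (\ref{Gav}). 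In effect you abstract the paper's proof of (\ref{ss}) into a general principle and let it absorb (\ref{jacobi}) as well, eliminating the connection-dependent steps and the composite $\mathcal{H}\circ i^{-1}$ entirely --- a cleaner and slightly more general argument. The two points you flag are indeed the only ones needing care, and both go through: closure of projectable sections under $\llb\cdot,\cdot\rrb$ is a one-line check in the basis $\{\mathcal{X}_\alpha,\mathcal{V}_\alpha\}$ (the $\mathcal{X}$-components of projectable sections are pullbacks by $\pi$, a property preserved by the basis brackets and the anchor), and $\llb\eta^C,\xi^V\rrb=[\eta,\xi]^V$ is the prolongation analogue of $[\xi^c,\eta^\vee]=[\xi,\eta]^\vee_E$, which the paper itself uses without comment in its function-level computation for (\ref{jacobi}). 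A side benefit of your formulation is that it makes transparent why the right-hand side of (\ref{jacobi}) simplifies to $\widetilde{\mathcal{L}}_{[\eta,\xi]^V}$ while that of (\ref{ss}) must remain $\widetilde{\mathcal{L}}_{\llb\eta^C,\xi^h\rrb}$.
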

\begin{proof}
For any $\wt{\eta}\in \pounds^\pi(E)$ we have
\begin{equation}\label{XC}
J\llb \eta^C, \wt{\eta}\rrb=\llb \eta^C, J\wt{\eta}\rrb.
\end{equation}
Thus
$$\widetilde{\mathcal{L}}_{\eta^C}\circ j\wt{\eta}=\mathcal{V}\llb \eta^C,J\wt{\eta}\rrb=\mathcal{V}J\llb \eta^C, \wt{\eta}\rrb=j\llb \eta^C, \wt{\eta}\rrb=j\mathcal{L}^{\flat}_{\eta^C}\wt{\eta},$$
proving (\ref{flat}).
For any $F\in C^{\infty}(E)$ and $\bar{\sigma}\in \Gamma(\pounds^\pi E)$,
\begin{align*}
(\widetilde{\mathcal{L}}_{\eta^C}\circ\nabla^{v}_{\widehat{\xi}}-\nabla^{v}_{\widehat{\xi}}\circ
\widetilde{\mathcal{L}}_{\eta^C})F&=\widetilde{\mathcal{L}}_{\eta^C}(\rho_\pounds(\xi^V)F)-\nabla^{v}_{\widehat{\xi}}(\rho_\pounds(\eta^C)F)\\
&=\rho_\pounds(\eta^C)(\rho_\pounds(\xi^V)F)-\rho_\pounds(\xi^V)(\rho_\pounds(\eta^C)F)\\
&=\rho_\pounds(\llb \eta^C,\xi^V\rrb)F=\rho_\pounds([\eta,\xi]^V)F=\widetilde{\mathcal{L}}_{[\eta,\xi]^V}F.
\end{align*}
Using local coordinates and straightforward computations, yield the following
\begin{equation}\label{Gav20}
\llb \eta^V,i\bar{\xi}\rrb=J\llb \eta^V,\mathcal{H}\bar{\xi}\rrb.
\end{equation}
Also from (\ref{aa}), (\ref{flat}), (\ref{XC}),(\ref{Gav20}) and Jacobi identity, we conclude
\begin{align*}
i\circ(\widetilde{\mathcal{L}}_{\eta^C}\circ\nabla^{v}_{\widehat{\xi}}-\nabla^{v}_{\widehat{\xi}}\circ\widetilde
{\mathcal{L}}_{\eta^C})(\bar{\sigma})&=i(\widetilde{\mathcal{L}}_{\eta^C}j\llb \xi^V, \mathcal{H}\bar{\sigma}\rrb)-i(\nabla^{v}_{\widehat{\xi}}i^{-1}\llb \eta^C, i\bar{\sigma}\rrb)\\
&=J\llb \eta^C, \llb \xi^V,\mathcal{H}\bar{\sigma}\rrb\rrb-J\llb \xi^V,\mathcal{H}\circ i^{-1} \llb \eta^C,i\bar{\sigma}\rrb\rrb\qquad\\
&=\llb \eta^C, \llb \xi^V,i\bar{\sigma}\rrb\rrb-\llb \xi^V,\llb \eta^C,i\bar{\sigma}\rrb\rrb\\
&=\llb \eta^C, \llb \xi^V,i\bar{\sigma}\rrb\rrb+\llb \xi^V,\llb i\bar{\sigma}, \eta^C\rrb\rrb\\
&=-\llb i\bar{\sigma},\llb \eta^C,\xi^V\rrb\rrb=\llb[\eta,\xi]^V, i\bar{\sigma}\rrb.
\end{align*}
Hence
$$(\widetilde{\mathcal{L}}_{\eta^C}\circ\nabla^{v}_{\widehat{\xi}}-\nabla^{v}_{\widehat{\xi}}\circ\widetilde
{\mathcal{L}}_{\eta^C})(\bar{\sigma})=i^{-1}\llb[\eta,\xi]^V, i\bar{\sigma}\rrb=\widetilde{\mathcal{L}}_{\llb \eta^C,\xi^h\rrb}\bar{\sigma},$$
proving (\ref{jacobi}). To proving (\ref{ss}), first take a $F\in C^{\infty}(E)$. Then we have $\widetilde{\mathcal{L}}_{\llb \eta^C,\xi^h\rrb}F=\rho_\pounds(\llb \eta^C,\xi^h\rrb)F$ and
\begin{align*}
(\widetilde{\mathcal{L}}_{\eta^C}\circ\nabla^{h}_{\widehat{\xi}}-\nabla^{h}_{\widehat{\xi}}\circ
\widetilde{\mathcal{L}}_{\eta^C})F&=\widetilde{\mathcal{L}}_{\eta^C}(\rho_\pounds(\xi^h)F)-\nabla^{h}_{\widehat{\xi}}(\rho_\pounds(\eta^C)F)\\
&=\rho_\pounds(\eta^C)\rho_\pounds(\xi^h)F-\rho_\pounds(\xi^h)\rho_\pounds(\eta^C)F\\
&=[\rho_\pounds(\eta^C),\rho_\pounds(\xi^h)]F\\
&=\rho_\pounds(\llb \eta^C,\xi^h\rrb) F.
\end{align*}
Hence both sides of (\ref{ss}), act on the functions in the same way. On the other hand
\begin{align*}
i\circ(\widetilde{\mathcal{L}}_{\eta^C}\circ\nabla^{h}_{\widehat{\xi}}-\nabla^{h}_{\widehat{\xi}}\circ
\widetilde{\mathcal{L}}_{\eta^C})(\bar{\sigma})&=i(\widetilde{\mathcal{L}}_{\eta^C}\mathcal{V}\llb \xi^h,i\bar{\sigma}\rrb)-i\nabla^{h}_{\widehat{\xi}}i^{-1}\llb \eta^C,i\bar{\sigma}\rrb\\
&=\llb \eta^C,i\mathcal{V}\llb \xi^h,i\bar{\sigma}\rrb\rrb-i\mathcal{V}\llb \xi^h,\llb \eta^C,i\bar{\sigma}\rrb\rrb\\
&=\llb \eta^C,v\llb \xi^h,i\bar{\sigma}\rrb\rrb-v\llb \xi^h,\llb \eta^C,i\bar{\sigma}\rrb\rrb.
\end{align*}
But $\llb \xi^h,i\bar{\sigma}\rrb$ and $\llb \xi^h,\llb \eta^C,i\bar{\sigma}\rrb\rrb$ are vertical. Thus
\begin{align*}
i\circ(\widetilde{\mathcal{L}}_{\eta^C}\circ\nabla^{h}_{\widehat{\xi}}-\nabla^{h}_{\widehat{\xi}}\circ
\widetilde{\mathcal{L}}_{\eta^C})(\bar{\sigma})&=\llb \eta^C,\llb \xi^h,i\bar{\sigma}\rrb\rrb-\llb \xi^h,\llb \eta^C,i\bar{\sigma}\rrb\rrb\\
&=\llb \llb \eta^C,\xi^h\rrb,i\bar{\sigma}\rrb.
\end{align*}
The assertion will be proved with acting $i^{-1}$ on both sides of the above equality.
\end{proof}
Acting the equality (\ref{jacobi}) on $\widehat{\sigma}$, yields the following.
\begin{cor}\label{me}
Let $\eta,\xi,\sigma\in \Gamma(E)$, then $\nabla^{v}_{\widehat{\xi}}\circ
\widetilde{\mathcal{L}}_{\eta^C}(\widehat{\sigma})=0$.
\end{cor}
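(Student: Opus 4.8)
The plan is to obtain this corollary for free from the commutator formula (\ref{jacobi}), which has already been established, by evaluating both sides on the basic lifted section $\widehat{\sigma}$ and then invoking the vanishing (\ref{20}). First I would record (\ref{jacobi}) applied to $\widehat{\sigma}\in\Gamma(\pi^*\pi)$, namely
\[
\widetilde{\mathcal{L}}_{\eta^C}\bigl(\nabla^{v}_{\widehat{\xi}}\widehat{\sigma}\bigr)-\nabla^{v}_{\widehat{\xi}}\bigl(\widetilde{\mathcal{L}}_{\eta^C}\widehat{\sigma}\bigr)=\widetilde{\mathcal{L}}_{[\eta,\xi]^V}\widehat{\sigma}.
\]

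Next I would simplify the two ends that involve only vertical differentials of basic sections. On the left, (\ref{20}) gives $\nabla^{v}_{\widehat{\xi}}\widehat{\sigma}=0$, so the first summand is $\widetilde{\mathcal{L}}_{\eta^C}(0)=0$ and the left-hand side reduces to $-\nabla^{v}_{\widehat{\xi}}(\widetilde{\mathcal{L}}_{\eta^C}\widehat{\sigma})$. On the right, since $[\eta,\xi]$ is again a section of $E$, the identification $\widetilde{\mathcal{L}}_{\zeta^V}=\nabla^{v}_{\widehat{\zeta}}$ from (\ref{sufi}) (with $\zeta=[\eta,\xi]$) turns the right-hand side into $\nabla^{v}_{\widehat{[\eta,\xi]}}\widehat{\sigma}$, which vanishes by a second application of (\ref{20}).

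Putting these together yields $-\nabla^{v}_{\widehat{\xi}}(\widetilde{\mathcal{L}}_{\eta^C}\widehat{\sigma})=0$, which is precisely the assertion $\nabla^{v}_{\widehat{\xi}}\circ\widetilde{\mathcal{L}}_{\eta^C}(\widehat{\sigma})=0$. There is no real obstacle here; the only point to verify is that both uses of (\ref{20}) are legitimate, i.e.\ that their arguments are genuine lifts $\widehat{(\cdot)}$ of sections of $E$ rather than arbitrary elements of $\Gamma(\pi^*\pi)$. This holds because $\sigma$ and $[\eta,\xi]$ are honest sections of $E$, so the whole proof is a short bookkeeping computation once (\ref{jacobi}) is in hand.
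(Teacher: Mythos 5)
Your proposal is correct and matches the paper's own argument exactly: the paper derives this corollary by acting equality (\ref{jacobi}) on $\widehat{\sigma}$, killing the term $\widetilde{\mathcal{L}}_{\eta^C}(\nabla^{v}_{\widehat{\xi}}\widehat{\sigma})$ via (\ref{20}) and the right-hand side $\widetilde{\mathcal{L}}_{[\eta,\xi]^V}\widehat{\sigma}=\nabla^{v}_{\widehat{[\eta,\xi]}}\widehat{\sigma}$ via (\ref{sufi}) and (\ref{20}), just as you do. Your added remark that (\ref{20}) applies only because $\widehat{\sigma}$ and $\widehat{[\eta,\xi]}$ are lifts of genuine sections of $E$ is a point the paper leaves implicit, and it is correctly handled.
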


\section{Curvature collineation}
In this section, we introduce some tensors that are important in studying the configurations of the bundle maps and yield some results on them in view of collineation.

The \textit{Jacobi endomorphism} $\mathpzc{K}:\mathcal{T}_1(\pi^*\pi)\rightarrow\mathcal{T}_{1}^{1}(\pi^*\pi)$ whrere for example $\mathcal{T}_1$ denoted the $(1, 0)$ tensors of bundle $\pi^*\pi$; is defined as
 \begin{equation}
 \mathpzc{K}(\bar{\eta}):=\mathcal{V}\llb S,\mathcal{H}(\bar{\eta})\rrb.
 \end{equation}
 \begin{cor}
 The Jacobi endomorphism $\mathpzc{K}$ has the following locally expression.
 \begin{align*}
\mathpzc{K}(\widehat{e_\gamma})&=\lbrace-y^\beta(L_{\beta\gamma}^{\theta}\circ\pi)\mathcal{B}^{\alpha}_{\theta} +\mathcal{B}^{\beta}_{\gamma}\mathcal{B}^{\alpha}_{\beta}+
y^\beta(\rho^{i}_\beta\circ\pi)\dfrac{\partial \mathcal{B}^{\alpha}_{\gamma}}{\partial \textbf{x}^i}\\
&-(\rho^{i}_\gamma\circ\pi)\dfrac{\partial S^\alpha}{\partial \textbf{x}^i}+S^\beta\dfrac{\partial \mathcal{B}^{\alpha}_{\gamma}}{\partial {\textbf{y}}^\beta}-\mathcal{B}^{\beta}_{\gamma}\dfrac{\partial S^\alpha}{\partial {\textbf{y}}^\beta}\rbrace\widehat{e_\alpha}.
\end{align*}
 \end{cor}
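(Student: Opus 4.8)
The plan is to compute $\mathpzc{K}(\widehat{e_\gamma})$ directly from its definition $\mathpzc{K}(\bar\eta)=\mathcal{V}\llb S,\mathcal{H}(\bar\eta)\rrb$ by evaluating the two ingredients explicitly in a local frame. First I would record that $\mathcal{H}(\widehat{e_\gamma})=e_\gamma^h=\delta_\gamma=\mathcal{X}_\gamma+\mathcal{B}^\beta_\gamma\mathcal{V}_\beta$ and that the spray has the form $S=\textbf{y}^\alpha\mathcal{X}_\alpha+S^\alpha\mathcal{V}_\alpha$ by \eqref{semispray}. This reduces the whole claim to expanding the single bracket $\llb S,\delta_\gamma\rrb$ and then applying the vertical map $\mathcal{V}$ to the outcome.

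For the bracket I would work in the $\{\mathcal{X}_\alpha,\mathcal{V}_\alpha\}$ frame rather than the adapted one, since there the structure brackets are simplest: $\llb\mathcal{X}_\alpha,\mathcal{X}_\beta\rrb=(L^\gamma_{\alpha\beta}\circ\pi)\mathcal{X}_\gamma$, $\llb\mathcal{X}_\alpha,\mathcal{V}_\beta\rrb=0$ and $\llb\mathcal{V}_\alpha,\mathcal{V}_\beta\rrb=0$. Expanding $\llb\textbf{y}^\alpha\mathcal{X}_\alpha+S^\alpha\mathcal{V}_\alpha,\ \mathcal{X}_\gamma+\mathcal{B}^\beta_\gamma\mathcal{V}_\beta\rrb$ by $\mathbb{R}$-bilinearity and the Leibniz rule $\llb fA,gB\rrb=fg\llb A,B\rrb+f(\rho_\pounds(A)g)B-g(\rho_\pounds(B)f)A$, and inserting the anchors $\rho_\pounds(\mathcal{X}_\alpha)=(\rho^i_\alpha\circ\pi)\tfrac{\partial}{\partial\textbf{x}^i}$ and $\rho_\pounds(\mathcal{V}_\alpha)=\tfrac{\partial}{\partial\textbf{y}^\alpha}$, I expect six surviving terms: two proportional to $\mathcal{X}$ (one from the $L$-bracket, one from letting $\rho_\pounds(\mathcal{V}_\beta)$ act on $\textbf{y}^\alpha$) and four proportional to $\mathcal{V}$ (the $\textbf{x}$- and $\textbf{y}$-derivatives of $\mathcal{B}^\beta_\gamma$ and of $S^\alpha$).

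The last step applies $\mathcal{V}$ to this sum using $\mathcal{V}(\mathcal{V}_\alpha)=\widehat{e_\alpha}$ and, crucially, $\mathcal{V}(\mathcal{X}_\alpha)=-\mathcal{B}^\beta_\alpha\widehat{e_\beta}$. The second rule is precisely what converts the two $\mathcal{X}$-terms into the $-\textbf{y}^\beta(L^\theta_{\beta\gamma}\circ\pi)\mathcal{B}^\alpha_\theta$ and $\mathcal{B}^\beta_\gamma\mathcal{B}^\alpha_\beta$ contributions of the claimed formula, while the four $\mathcal{V}$-terms pass through $\mathcal{V}$ unchanged and yield the remaining four summands. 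A final relabelling of the dummy indices then puts the result in the stated form.

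I do not expect a genuine obstacle here, since the statement is an identity obtained by unwinding definitions. The only place demanding care is the $\mathcal{X}$-component of $\llb S,\delta_\gamma\rrb$: one must remember that $\mathcal{V}$ is the left splitting defined on all of $\Gamma(\pounds^\pi E)$, not merely on the vertical part, so these horizontal-looking terms are not annihilated but are sent to $-\mathcal{B}^\beta_\alpha\widehat{e_\beta}$; overlooking this would drop exactly the first two terms of the formula. The remainder is index bookkeeping.
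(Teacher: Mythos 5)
Your proposal is correct and takes essentially the same route as the paper: both expand $\llb S,\delta_\gamma\rrb$ in the $\{\mathcal{X}_\alpha,\mathcal{V}_\alpha\}$ frame via the Leibniz rule and the anchors $\rho_\pounds(\mathcal{X}_\alpha)=(\rho^i_\alpha\circ\pi)\frac{\partial}{\partial\textbf{x}^i}$, $\rho_\pounds(\mathcal{V}_\alpha)=\frac{\partial}{\partial\textbf{y}^\alpha}$, arriving at the same two $\mathcal{X}$-terms and four $\mathcal{V}$-terms, and then apply $\mathcal{V}$ using $\mathcal{V}(\mathcal{V}_\alpha)=\widehat{e_\alpha}$ and $\mathcal{V}(\mathcal{X}_\alpha)=-\mathcal{B}^\beta_\alpha\widehat{e_\beta}$, your caution about the non-trivial action of $\mathcal{V}$ on the horizontal-looking terms matching the paper's final step exactly. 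The only cosmetic difference is that the paper first computes $\llb S,\mathcal{H}(\bar{\sigma})\rrb$ for a general $\bar{\sigma}=\bar{\sigma}^\gamma\widehat{e_\gamma}$ and then specializes to $\bar{\sigma}=\widehat{e_\gamma}$, whereas you substitute $\widehat{e_\gamma}$ from the outset.
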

 \begin{proof}
 In \cite{peyghan}, it is shown that
\begin{align}\label{hlocal}
\mathcal{H}(\bar{\eta})&=\bar{\eta}^\alpha\mathcal{X}_\alpha+\bar{\eta}^\alpha\mathcal{B}^{\beta}_{\alpha}\mathcal{V}_\beta, \qquad \bar{\eta}=\bar{\eta}^\gamma\widehat{e_\gamma}.
\end{align}
Therefore, choosing $\bar{\sigma}=\bar{\sigma}^\gamma\widehat{e_\gamma}$ yields
\begin{align*}
\llb S,\mathcal{H}(\bar{\sigma})\rrb&=y^\alpha \bar{\sigma}^\gamma(L_{\alpha\gamma}^{\theta}\circ\pi)\mathcal{X}_\theta+y^\alpha[(\rho^{i}
_\alpha\circ\pi)\dfrac{\partial \bar{\sigma}^\gamma}{\partial {\textbf{x}}^i}]\mathcal{X}_\gamma+y^\alpha[(\rho^{i}
_\alpha\circ\pi)\dfrac{\partial (\bar{\sigma}^\gamma\mathcal{B}^{\beta}_\gamma)}{\partial \textbf{x}^i}]\mathcal{V}_\beta\\
&-\bar{\sigma}^\gamma\mathcal{B}^{\alpha}_\gamma\mathcal{X}_\alpha+S^\alpha\dfrac{\partial \bar{\sigma}^\gamma}{\partial {\textbf{y}}^\alpha}\mathcal{X}_\gamma-\bar{\sigma}^\gamma[(\rho^{i}_\gamma\circ\pi)\dfrac{\partial S^\alpha}{\partial \textbf{x}^i}]\mathcal{V}_\alpha\\
&+S^\alpha\dfrac{\partial (\bar{\sigma}^\gamma\mathcal{B}^{\beta}_\gamma)}{\partial{\textbf{y}}^\alpha}\mathcal{V}_\beta-\bar{\sigma}^\gamma\mathcal{B}^{\beta}_\gamma\dfrac{\partial S^\alpha}{\partial {\textbf{y}}^\beta}\mathcal{V}_\alpha.
\end{align*}
Now, setting $\bar{\sigma}=\widehat{e_\gamma}$ and using $\mathcal{H}(\widehat{e_\gamma})=\delta_\gamma$ arise
\begin{align*}
\llb S,\mathcal{H}(\widehat{e_\gamma})\rrb=\llb S,\delta_\gamma\rrb&=\lbrace y^\beta(L_{\beta\gamma}^{\alpha}\circ\pi)-\mathcal{B}^{\alpha}_{\gamma}\rbrace\mathcal{X}_\alpha+\lbrace
y^\beta(\rho^{i}_\beta\circ\pi)\dfrac{\partial \mathcal{B}^{\alpha}_{\gamma}}{\partial \textbf{x}^i}\\
&-(\rho^{i}_\gamma\circ\pi)\dfrac{\partial S^\alpha}{\partial \textbf{x}^i}+S^\beta\dfrac{\partial \mathcal{B}^{\alpha}_{\gamma}}{\partial \textbf{y}^\beta}-\mathcal{B}^{\beta}_{\gamma}\dfrac{\partial S^\alpha}{\partial \textbf{y}^\beta}\rbrace\mathcal{V}_\alpha.
\end{align*}
Acting $\mathcal{V}$ on the above equality, yields the assertion.
\end{proof}
Using the Jacobi endomorphism $\mathcal{K}$, the \textit{fundamental affine curvature} $\mathpzc{R}:\mathcal{T}_{2}^{0}(\pi^*\pi)\rightarrow\mathcal{T}_{2}^{1}(\pi^*\pi)$ is defined by
 \begin{equation}
 \mathpzc{R}(\bar{\eta},\bar{\xi}):=\dfrac{1}{3}(\nabla^v\mathcal{K}(\bar{\xi},\bar{\eta})-\nabla^v\mathcal{K}(\bar{\eta},\bar{\xi})).
 \end{equation}

 The \textit{affine curvature} $\mathpzc{H}:\mathcal{T}_{3}^{0}(\pi^*\pi)\rightarrow\mathcal{T}_{3}^{1}(\pi^*\pi)$ is defined by
 \begin{equation}
 \mathpzc{H}(\bar{\eta},\bar{\xi})\bar{\sigma}:=\nabla^v\mathpzc{R}(\bar{\sigma},\bar{\eta},\bar{\xi}).
 \end{equation}
 For a $A\in \mathcal{T}_{l+1}^{1}(\pi)$, its \textit{trace} denoted by $\text{tr}(A)$ is defined as follows
\begin{equation*}
\text{tr}(A)({X_1}, ...,{X_l}):=\text{tr}(\Phi), \qquad \Phi({Z}):=A({Z},{X_1}, ...,{X_l}),
\end{equation*}
where $\pi$ is a bundle projection.

The \textit{projective deviation tensor} $\mathpzc{W}^\circ$ is defined by
\begin{equation}
\mathpzc{W}^\circ:=\mathpzc{K}-\dfrac{1}{n-1}(\text{tr}\mathpzc{K})Id_{\Gamma({\pounds^\pi E})}+\dfrac{3}{n+1}(\text{tr}\mathpzc{R})\otimes\delta+\dfrac{2-n}{n^2-1}(\nabla^v\text{tr}\mathpzc{K})\otimes\delta.
\end{equation}
Note that we can quickly rewrite this tensor as follows
\begin{equation}
\mathpzc{W}^\circ=\mathpzc{K}-\stackrel{\circ}{\mathpzc{K}}Id_{\Gamma({\pounds^\pi E})}+\dfrac{1}{n+1}(\nabla^v\stackrel{\circ}{\mathpzc{K}}-\text{tr}\nabla^v\mathpzc{K})\otimes\delta,
\end{equation}
where $\stackrel{\circ}{\mathpzc{K}}:=\dfrac{1}{n-1}\text{tr}\mathpzc{K}$.

The \textit{fundamental projective curvature} is defined by
\begin{equation}
 \mathpzc{W}(\bar{\eta},\bar{\xi}):=\dfrac{1}{3}(\nabla^v\mathpzc{W}^\circ(\bar{\xi},\bar{\eta})-\nabla^v\mathpzc{W}^\circ(\bar{\eta},\bar{\xi})),
\end{equation}
and the \text{projective curvature} $W^*$ as
\begin{equation}
\mathpzc{W}^*(\bar{\eta},\bar{\xi})(\bar{\sigma}):=\nabla^v\mathpzc{W}(\bar{\sigma},\bar{\eta},\bar{\xi}).
\end{equation}
The \textit{Berwald curvature} $\mathpzc{B}$ and \textit{Douglas curvature} $\mathpzc{D}$ are defined by
\begin{equation}
\mathpzc{B}(\widehat{\eta},\widehat{\xi})(\widehat{\sigma}):=(\nabla^v\nabla^h\widehat{\sigma})(\widehat{\eta},\widehat{\xi}),
\end{equation}
and
\begin{equation}
\mathpzc{D}:=\mathpzc{B}-\dfrac{1}{n+1}\lbrace(\text{tr} \mathpzc{B})\odot Id_{\Gamma({\pounds^\pi E})}+\nabla^v\text{tr}\mathpzc{B}\otimes\delta\rbrace,
\end{equation}
where $\odot$ shows the  numerical factor is omitted from symmetric product.

 A {\it{Lie symmetry}} of the semispray $S$ is a section $\eta$ of $E$ such that $\llb S, \eta^C\rrb=0$. We have the following.
\begin{proposition}
A section $\eta=\eta^\alpha e_\alpha$ of $E$ is a Lie symmetry of $S$ if and only if
\begin{equation}\label{khune}
\textbf{y}^\beta \textbf{y}^\lambda(\rho^i_\lambda\circ\pi)\frac{\partial(\eta^\alpha_{|_\beta}\circ\pi)}{\partial\textbf{x}^i}-((\eta^\lambda\rho^i_\lambda)\circ\pi)\frac{\partial S^\alpha}{\partial\textbf{x}^i}+S^\lambda(\eta^\alpha_{|_\lambda}\circ\pi)-y^\beta(\eta^\lambda_{|_\beta}\circ\pi)\frac{\partial S^\alpha}{\partial\textbf{y}^\lambda}=0,
\end{equation}
where $\eta^\alpha_{|_\beta}:=\rho^j_\beta\frac{\partial \eta^\alpha}{\partial x^j}-\eta^\gamma L^\alpha_{\gamma\beta}$.
\end{proposition}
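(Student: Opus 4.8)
The plan is to turn the coordinate-free condition $\llb S,\eta^C\rrb=0$ into the scalar identity (\ref{khune}) by a direct expansion in the frame $\{\mathcal{X}_\alpha,\mathcal{V}_\alpha\}$. First I would record the local data needed: the semispray expression $S=\textbf{y}^\alpha\mathcal{X}_\alpha+S^\alpha\mathcal{V}_\alpha$ from (\ref{semispray}), the complete lift $\eta^C=(\eta^\alpha\circ\pi)\mathcal{X}_\alpha+\textbf{y}^\beta(\eta^\alpha_{|_\beta}\circ\pi)\mathcal{V}_\alpha$ from (\ref{esi}) with $\eta^\alpha_{|_\beta}=\rho^j_\beta\frac{\partial\eta^\alpha}{\partial x^j}-\eta^\gamma L^\alpha_{\gamma\beta}$, the bracket relations $\llb\mathcal{X}_\alpha,\mathcal{X}_\beta\rrb=(L^\gamma_{\alpha\beta}\circ\pi)\mathcal{X}_\gamma$, $\llb\mathcal{X}_\alpha,\mathcal{V}_\beta\rrb=0$, $\llb\mathcal{V}_\alpha,\mathcal{V}_\beta\rrb=0$ of this frame, and the anchor values $\rho_\pounds(\mathcal{X}_\alpha)=(\rho^i_\alpha\circ\pi)\frac{\partial}{\partial\textbf{x}^i}$, $\rho_\pounds(\mathcal{V}_\alpha)=\frac{\partial}{\partial\textbf{y}^\alpha}$, which follow from the definitions of $\mathcal{X}_\alpha$, $\mathcal{V}_\alpha$.

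Then I would expand $\llb S,\eta^C\rrb$ by $\mathbb{R}$-bilinearity into four brackets of the type $\llb fE_A,gE_B\rrb$ with $E_A,E_B\in\{\mathcal{X}_\gamma,\mathcal{V}_\gamma\}$, applying the Leibniz rule
\[
\llb fE_A,gE_B\rrb=fg\llb E_A,E_B\rrb+f(\rho_\pounds(E_A)g)E_B-g(\rho_\pounds(E_B)f)E_A.
\]
The bookkeeping splits according to how the anchor acts: $\rho_\pounds(\mathcal{X}_\mu)$ differentiates only the base-dependent factors, producing $(\rho^i_\mu\frac{\partial\eta^\alpha}{\partial x^i})\circ\pi$ and $\textbf{y}^\beta(\rho^i_\mu\circ\pi)\frac{\partial(\eta^\alpha_{|_\beta}\circ\pi)}{\partial\textbf{x}^i}$, while $\rho_\pounds(\mathcal{V}_\mu)$ acts on the fiber factors through $\rho_\pounds(\mathcal{V}_\nu)\textbf{y}^\mu=\delta^\mu_\nu$ and $\rho_\pounds(\mathcal{V}_\nu)S^\mu=\frac{\partial S^\mu}{\partial\textbf{y}^\nu}$; since $\eta^\alpha\circ\pi$ and $\eta^\alpha_{|_\beta}\circ\pi$ are pulled back from $M$, all their $\textbf{y}$-derivatives vanish. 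Collecting the outcome into its $\mathcal{X}$- and $\mathcal{V}$-parts is then mechanical.

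The step I expect to be the crux is showing that the $\mathcal{X}$-components cancel, so that $\llb S,\eta^C\rrb$ is purely vertical and a single scalar equation survives. The $\mathcal{X}$-part receives two contributions: from $\llb\textbf{y}^\mu\mathcal{X}_\mu,(\eta^\nu\circ\pi)\mathcal{X}_\nu\rrb$ one gets $\textbf{y}^\mu[(\rho^i_\mu\frac{\partial\eta^\alpha}{\partial x^i})\circ\pi+(\eta^\nu L^\alpha_{\mu\nu})\circ\pi]\mathcal{X}_\alpha$, which by the antisymmetry $L^\alpha_{\mu\nu}=-L^\alpha_{\nu\mu}$ is exactly $\textbf{y}^\mu(\eta^\alpha_{|_\mu}\circ\pi)\mathcal{X}_\alpha$; from the last Leibniz term of $\llb\textbf{y}^\mu\mathcal{X}_\mu,\textbf{y}^\beta(\eta^\nu_{|_\beta}\circ\pi)\mathcal{V}_\nu\rrb$, via $\rho_\pounds(\mathcal{V}_\nu)\textbf{y}^\mu=\delta^\mu_\nu$, one gets $-\textbf{y}^\beta(\eta^\alpha_{|_\beta}\circ\pi)\mathcal{X}_\alpha$. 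These are negatives of one another and cancel. Once this is in place, the remaining $\mathcal{V}_\alpha$-coefficient is assembled from the three surviving pieces — the $\mathcal{V}$-term of $\llb\textbf{y}^\mu\mathcal{X}_\mu,\textbf{y}^\beta(\eta^\nu_{|_\beta}\circ\pi)\mathcal{V}_\nu\rrb$, the bracket $\llb S^\mu\mathcal{V}_\mu,(\eta^\nu\circ\pi)\mathcal{X}_\nu\rrb$, and $\llb S^\mu\mathcal{V}_\mu,\textbf{y}^\beta(\eta^\nu_{|_\beta}\circ\pi)\mathcal{V}_\nu\rrb$ — and reads precisely
\[
\textbf{y}^\beta\textbf{y}^\lambda(\rho^i_\lambda\circ\pi)\frac{\partial(\eta^\alpha_{|_\beta}\circ\pi)}{\partial\textbf{x}^i}-((\eta^\lambda\rho^i_\lambda)\circ\pi)\frac{\partial S^\alpha}{\partial\textbf{x}^i}+S^\lambda(\eta^\alpha_{|_\lambda}\circ\pi)-\textbf{y}^\beta(\eta^\lambda_{|_\beta}\circ\pi)\frac{\partial S^\alpha}{\partial\textbf{y}^\lambda}.
\]
Hence $\llb S,\eta^C\rrb=0$ if and only if this coefficient vanishes, which is exactly (\ref{khune}); a final relabelling of summation indices completes the proof. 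Apart from this cancellation, the only care required is consistent index relabelling and keeping track of which factors are pulled back from $M$.
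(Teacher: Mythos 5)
Your proposal is correct and takes essentially the same route as the paper: a direct expansion of $\llb S,\eta^C\rrb$ in the frame $\{\mathcal{X}_\alpha,\mathcal{V}_\alpha\}$ using the Leibniz rule and the anchors $\rho_\pounds(\mathcal{X}_\alpha)=(\rho^i_\alpha\circ\pi)\frac{\partial}{\partial\textbf{x}^i}$, $\rho_\pounds(\mathcal{V}_\alpha)=\frac{\partial}{\partial\textbf{y}^\alpha}$, concluding that the condition reduces to the vanishing of the $\mathcal{V}_\alpha$-coefficient. Your only addition is to spell out, via the antisymmetry $L^\alpha_{\mu\nu}=-L^\alpha_{\nu\mu}$ and the term $\rho_\pounds(\mathcal{V}_\nu)\textbf{y}^\mu=\delta^\mu_\nu$, the cancellation of the $\mathcal{X}_\alpha$-part that the paper dispatches with the phrase ``using direct calculation,'' which is a correct and welcome elaboration.
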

\begin{proof}
Using (\ref{esi}) and (\ref{semispray}) we obtain
\begin{align*}
\llb S, \eta^C\rrb&=\llb\textbf{y}^\alpha\mathcal{X}_\alpha+S^\alpha\mathcal{V}_\alpha, (\eta^\lambda\circ\pi)\mathcal{X}_\lambda+\textbf{y}^\beta(\eta^\lambda_{|_\beta}\circ\pi)\mathcal{V}_\lambda\rrb\\
&=\{\textbf{y}^\lambda\rho^i_\lambda\frac{\partial(\eta^\alpha\circ\pi)}{\partial\textbf{x}^i}+\textbf{y}^\sigma((\eta^\lambda L^\alpha_{\sigma\lambda})\circ\pi)-\textbf{y}^\beta(\eta^\alpha_{|_\beta}\circ\pi)\}\mathcal{X}_\alpha\\
&\ \ +\{\textbf{y}^\beta \textbf{y}^\lambda(\rho^i_\lambda\circ\pi)\frac{\partial(\eta^\alpha_{|_\beta}\circ\pi)}{\partial\textbf{x}^i}-((\eta^\lambda\rho^i_\lambda)\circ\pi)\frac{\partial S^\alpha}{\partial\textbf{x}^i}+S^\lambda(\eta^\alpha_{|_\lambda}\circ\pi)\\
&\ \ -y^\beta(\eta^\lambda_{|_\beta}\circ\pi)\frac{\partial S^\alpha}{\partial\textbf{y}^\lambda}\}\mathcal{V}_\alpha.
\end{align*}
Using direct calculation we deduce that the coefficient of $\mathcal{X}_\alpha$ vanishes. Therefore $\llb S, \eta^C\rrb=0$ if and only if the coefficient of $\mathcal{V}_\alpha$ is zero.
\end{proof}
\begin{lemma}\label{lemedoktor}
Let $S$ be a spray on the prolongation of a Lie algebroid with the structure $(\llb ., .\rrb, \rho_\pounds)$ and $\eta\in\Gamma(E)$. Then the following statements are equivalent.
\begin{itemize}
\item[(i)] $\eta$ is a Lie symmetry of $S$;
\item[(ii)] $[\eta,\xi]^h=\llb \eta^C, \xi^h\rrb$ \text{for any} $\xi\in\Gamma(E)$;
\item[(iii)] $[ v, \eta^C]^{F-N}=0$.
\end{itemize}
\end{lemma}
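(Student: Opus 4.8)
The plan is to funnel all three conditions through the single vertical expression $\llb \xi^V,\llb S,\eta^C\rrb\rrb$, proving that each of (i)--(iii) is equivalent to the vanishing of this expression for every $\xi\in\Gamma(E)$. The computational heart of the argument is the identity
\begin{equation}\label{keyid}
\llb \eta^C,\xi^h\rrb=[\eta,\xi]^h-\tfrac12\llb \xi^V,\llb S,\eta^C\rrb\rrb,\qquad \xi\in\Gamma(E).
\end{equation}
To derive (\ref{keyid}) I would substitute the Berwald-connection formula $\xi^h=\tfrac12(\xi^C+\llb \xi^V,S\rrb)$, expand $\llb \eta^C,\llb \xi^V,S\rrb\rrb$ with the Jacobi identity, and simplify using the lift relations $\llb \eta^C,\xi^C\rrb=[\eta,\xi]^C$, $\llb \eta^C,\xi^V\rrb=[\eta,\xi]^V$ and the antisymmetry $\llb \eta^C,S\rrb=-\llb S,\eta^C\rrb$; the surviving combination $\tfrac12\big([\eta,\xi]^C+\llb [\eta,\xi]^V,S\rrb\big)$ is then $[\eta,\xi]^h$ by the same Berwald formula applied to the section $[\eta,\xi]$.

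Granting (\ref{keyid}), the equivalence of (i) and (ii) is almost immediate: if $\llb S,\eta^C\rrb=0$ then the last term drops and we read off (ii), while conversely (ii) forces $\llb \xi^V,\llb S,\eta^C\rrb\rrb=0$ for all $\xi$. It remains to upgrade this last condition to (i). By the Proposition the section $\llb S,\eta^C\rrb$ is vertical, say $\llb S,\eta^C\rrb=B^\alpha\mathcal{V}_\alpha$, and since $\xi^V=(\xi^\beta\circ\pi)\mathcal{V}_\beta$ and $\llb \mathcal{V}_\alpha,\mathcal{V}_\beta\rrb=0$ one computes $\llb \xi^V,\llb S,\eta^C\rrb\rrb=(\xi^\beta\circ\pi)\frac{\partial B^\alpha}{\partial\textbf{y}^\beta}\mathcal{V}_\alpha$; so the vanishing for all $\xi$ only yields $\frac{\partial B^\alpha}{\partial\textbf{y}^\beta}=0$, i.e. each $B^\alpha$ is constant along the fibres. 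To kill $B^\alpha$ I would use that $S$ is a genuine spray: from $\llb C,S\rrb=S$ and the degree-one homogeneity $\llb C,\eta^C\rrb=0$ of the complete lift, the Jacobi identity gives $\llb C,\llb S,\eta^C\rrb\rrb=\llb S,\eta^C\rrb$, which for the vertical section $B^\alpha\mathcal{V}_\alpha$ reads $\textbf{y}^\beta\frac{\partial B^\alpha}{\partial\textbf{y}^\beta}=2B^\alpha$; together with fibre-constancy this forces $B^\alpha=0$, establishing (i).

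For the equivalence with (iii) I would first observe that $[v,\eta^C]^{F-N}\wt{\xi}=\llb v\wt{\xi},\eta^C\rrb-v\llb \wt{\xi},\eta^C\rrb$ is a $(1,1)$-tensor --- the Leibniz check that makes $[J,\cdot]^{F-N}$ tensorial applies verbatim --- so it suffices to evaluate it on the adapted frame $\{\,\delta_\alpha=e_\alpha^h,\ \mathcal{V}_\alpha=e_\alpha^V\,\}$. On a vertical lift one gets $[v,\eta^C]^{F-N}\xi^V=h\llb \xi^V,\eta^C\rrb=0$, because $\llb \xi^V,\eta^C\rrb=-[\eta,\xi]^V$ is vertical and $h$ annihilates verticals; on a horizontal lift, using $v\xi^h=0$ and (\ref{keyid}), one gets $[v,\eta^C]^{F-N}\xi^h=v\llb \eta^C,\xi^h\rrb=-\tfrac12\llb \xi^V,\llb S,\eta^C\rrb\rrb$, since $[\eta,\xi]^h$ is horizontal while the remaining term is vertical. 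Thus (iii) too is equivalent to $\llb \xi^V,\llb S,\eta^C\rrb\rrb=0$ for all $\xi$, hence to (i) by the previous paragraph. The one genuinely delicate step --- and the point where the spray hypothesis, as opposed to a mere semispray, is indispensable --- is exactly this last upgrade: bracketing with the $\xi^V$ only detects the fibre-derivatives of $\llb S,\eta^C\rrb$, and it is the degree-two homogeneity of $\llb S,\eta^C\rrb$ that rules out a nonzero fibre-constant remainder.
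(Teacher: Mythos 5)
Your proposal is correct, but it takes a genuinely different route from the paper's. The paper works entirely in the adapted local frame: it sets $A(\eta,\xi):=[\eta,\xi]^h-\llb \eta^C,\xi^h\rrb$, expands this in coordinates, substitutes the Berwald coefficients $2\mathcal{B}^\gamma_\alpha=\partial S^\gamma/\partial\textbf{y}^\alpha-\textbf{y}^\beta(L^\gamma_{\alpha\beta}\circ\pi)$ from \eqref{mokh}, differentiates the Lie-symmetry equation \eqref{khune} with respect to $\textbf{y}$, and invokes the structure equations \eqref{2} to force $A=0$; the implications (ii)$\Rightarrow$(iii) and (iii)$\Rightarrow$(i) are then obtained by identifying $[v,\eta^C]^{F-N}(\mathcal{X}_\beta)$ with $A(\eta,\delta_\beta)$ and running the substitution backwards. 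Your intrinsic identity $\llb \eta^C,\xi^h\rrb=[\eta,\xi]^h-\tfrac12\llb \xi^V,\llb S,\eta^C\rrb\rrb$, derived from the Berwald formula, the Jacobi identity and the lift brackets, compresses all of that coordinate work into one line, and your frame evaluation of the (correctly checked to be tensorial) $[v,\eta^C]^{F-N}$ reproduces the paper's two frame computations conceptually. What your route buys, beyond brevity, is rigor at the weakest point of the paper's argument: in the paper's (iii)$\Rightarrow$(i) step, equation \eqref{hamid} only recovers the $\textbf{y}$-derivative of \eqref{khune}, and passing back to \eqref{khune} itself tacitly needs an Euler-type homogeneity argument; your degree-two homogeneity $\llb C,\llb S,\eta^C\rrb\rrb=\llb S,\eta^C\rrb$ (from $\llb C,S\rrb=S$, $\llb C,\eta^C\rrb=0$ and Jacobi), combined with fibre-constancy, supplies exactly the missing step and makes explicit that sprayhood rather than mere semisprayhood is what is used. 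Two minor points: your appeal to the Proposition for the verticality of $\llb S,\eta^C\rrb$ is legitimate, since the $\mathcal{X}_\alpha$-coefficient there vanishes identically (by antisymmetry of $L^\alpha_{\gamma\beta}$) for every $\eta$, symmetric or not; and the prolongation bracket identities $\llb \eta^C,\xi^C\rrb=[\eta,\xi]^C$, $\llb \eta^C,\xi^V\rrb=[\eta,\xi]^V$ and $\llb C,\eta^C\rrb=0$ that you use are not proved in the paper either (the second is used there freely), so a one-line verification from \eqref{esi} would make your argument fully self-contained.
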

\begin{proof}
$(i)\Longrightarrow(ii)$: Setting
\begin{equation}\label{sobh}
A(\eta,\xi):=[\eta,\xi]^h-\llb \eta^C, \xi^h\rrb,
\end{equation}
it arises the following
\begin{align}\label{man}
A(\eta,\xi)=\xi^\beta\lbrace \eta^\lambda\rho^i_\lambda\dfrac{\partial \mathcal{B}^\alpha_\beta}{\partial \textbf{x}^i}-{\textbf{y}}^\gamma\rho^i_\beta\dfrac{\partial \eta^\alpha_{|\gamma}}{\partial\textbf{x}^i}+{\textbf{y}}^\gamma \eta^\lambda_{|\gamma}\dfrac{\partial \mathcal{B}^\alpha_\beta}{\partial {\textbf{y}}^\lambda}- \mathcal{B}^\lambda_\beta \eta^\alpha_{|\lambda}+\eta^\gamma_{|\beta} \mathcal{B}^\alpha_\gamma\rbrace\mathcal{V}_\alpha,
\end{align}
where $\eta=\eta^\alpha e_\alpha$ and $\xi=\xi^\beta e_\beta$.
Putting (\ref{mokh}) in the above equation, yields
\begin{align}\label{yasi}
A(\eta,\xi)&=\xi^\beta\lbrace \dfrac{1}{2}\eta^\lambda\rho^i_\lambda(\dfrac{\partial^2S^\alpha}{\partial {\textbf{x}}^i\partial{\textbf{y}}^\beta}-{\textbf{y}}^\theta\dfrac{\partial L^{\alpha}_{\beta\theta}}{\partial{\textbf{x}}^i})-{\textbf{y}}^\gamma\rho^i_\beta\dfrac{\partial \eta^\alpha_{|\gamma}}{\partial {\textbf{x}}^i}\nonumber\\
&+\dfrac{1}{2}{\textbf{y}}^\gamma \eta^\lambda_{|\gamma}(\dfrac{\partial^2 S^\alpha}{\partial{\textbf{y}}^\beta\partial{\textbf{y}}^\lambda}-L^\alpha_{\beta\lambda})-\dfrac{1}{2}\eta^\alpha_{|\lambda}(\dfrac{\partial S^\lambda}{\partial {\textbf{y}}\beta}-{\textbf{y}}^\theta L^\lambda_{\beta\theta})\nonumber\\
&+\dfrac{1}{2}\eta^\gamma_{|\beta}(\dfrac{\partial S^\alpha}{\partial {\textbf{y}}\gamma}-{\textbf{y}}^\theta L^\alpha_{\gamma\theta})\rbrace.
\end{align}
Differentiating from (\ref{khune}) with respect to $\textbf{y}$ and putting it into the (\ref{yasi}), we obtain
\begin{align}
A(\eta,\xi)&=\dfrac{1}{2}\xi^\beta\lbrace {\textbf{y}}^\lambda\rho^i_\lambda\dfrac{\partial \eta^\alpha_{|\beta}}{\partial {\textbf{x}}^i}-\eta^\gamma_{|\beta}{\textbf{y}}^\lambda L^\alpha_{\gamma\lambda}-\eta^\lambda\rho^i_\lambda{\textbf{y}}^\gamma\dfrac{\partial L^\alpha_{\beta\gamma}}{\partial{\textbf{x}}^i}\nonumber\\
&-{\textbf{y}}^\gamma \eta^\lambda_{|\gamma}L^\alpha_{\beta\lambda}+\eta^\alpha_{|\lambda}{\textbf{y}}^\gamma L^\lambda_{\beta\gamma}-{\textbf{y}}^\gamma\rho^i_\beta\dfrac{\partial \eta^\alpha_{|\gamma}}{\partial {\textbf{x}}^i}\rbrace.
\end{align}
Putting $\eta^\alpha_{|\beta}=\rho^j_\beta\dfrac{\partial \eta^\alpha}{\partial \textbf{x}^i}-\eta^\gamma L^\alpha_{\beta\gamma}$ into the above equation, simplification and suitable changing of indices, the following will be yield.
\begin{align}
A(\eta,\xi)&=\dfrac{1}{2}\xi^\beta\lbrace{\textbf{y}^\lambda}\dfrac{\partial \eta^\alpha}{\partial{\textbf{x}}^j}(\rho^i_\lambda\dfrac{\partial \rho^j_\beta}{\partial{\textbf{x}}^i}-\rho^i_\beta\dfrac{\partial \rho^j_\lambda}{\partial{\textbf{x}}^i}+\rho^j_\gamma L^\gamma_{\beta\lambda})\nonumber\\
&-{\textbf{y}}^\lambda \eta^\gamma(\rho^i_\lambda\dfrac{\partial L^\alpha_{\gamma\beta}}{\partial{\textbf{x}}^i}
+L^\theta_{\gamma\beta}L^\alpha_{\lambda\theta}+\rho^i_\gamma\dfrac{\partial L^\alpha_{\beta\lambda}}{\partial {\textbf{x}}^i}+L^\theta_{\lambda\gamma}L^\alpha_{\beta\theta}\nonumber\\
&+L^\alpha_{\gamma\theta}L^\theta_{\beta\lambda}+\rho^i_\beta\dfrac{\partial L^\alpha_{\lambda\gamma}}{\partial{\textbf{x}}^i})\rbrace.
\end{align}
Applying (\ref{2}) with above equation, clearly $A(\eta,\xi)=0$.\\
$(ii)\Longrightarrow(iii)$: Direct calculations give us $[v, \eta^C]^{F-N}(\mathcal{V}_\beta)=0$ and $[v, \eta^C]^{F-N}(\mathcal{X}_\beta)=A(\eta, \delta_\beta)$. Since $(ii)$ holds, thus from (\ref{sobh}), we have $A(\eta,\xi)=0$ for any $\xi\in\Gamma(E)$. Thus $A(\eta,\delta_\alpha)=0$ and consequently $(iii)$ is hold.\\
$(iii)\Longrightarrow(i)$: From $(iii)$ we obtain
\begin{align}\label{hamid}
 \eta^\lambda\rho^i_\lambda\dfrac{\partial \mathcal{B}^\alpha_\beta}{\partial {\textbf{x}}^i}-{\textbf{y}}^\gamma\rho^i_\beta\dfrac{\partial \eta^\alpha_{|\gamma}}{\partial{\textbf{x}}^i}+{\textbf{y}}^\gamma \eta^\lambda_{|\gamma}\dfrac{\partial \mathcal{B}^\alpha_\beta}{\partial {\textbf{y}}^\lambda}- \mathcal{B}^\lambda_\beta \eta^\alpha_{|\lambda}+\eta^\gamma_{|\beta} \mathcal{B}^\alpha_\gamma=0.
\end{align}
Plugging (\ref{mokh}) into the (\ref{hamid}), relation (\ref{khune}) will be yield. Thus $(i)$ is hold.
\end{proof}
A projectable section $\wt{\xi}$ is said to be a \textit{curvature collineation of a curvature tensor} $C\in \mathcal{T}^{1}_{k}$ that $k\in\lbrace1,2,3\rbrace$ of a spray algebroid, if $\widetilde{\mathcal{L}}_{\wt{\xi}}C=0$.
\begin{theorem}\label{similar}
Let $S$ be a spray on the prolongation of a Lie algebroid with the structure $(\llb ., .\rrb, \rho_\pounds)$ and $\eta$ be a Lie symmetry of $S$. Then $\eta^C$ is a curvature collineation of
$\mathpzc{K}$, $\mathpzc{R}$ and $\mathpzc{H}$.
\end{theorem}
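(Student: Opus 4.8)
The plan is to prove the three vanishings $\widetilde{\mathcal{L}}_{\eta^C}\mathpzc{K}=\widetilde{\mathcal{L}}_{\eta^C}\mathpzc{R}=\widetilde{\mathcal{L}}_{\eta^C}\mathpzc{H}=0$ in a cascade: establish the case of $\mathpzc{K}$ by a direct bracket computation, and then push the vanishing through the two applications of $\nabla^v$ that produce $\mathpzc{R}$ from $\mathpzc{K}$ and $\mathpzc{H}$ from $\mathpzc{R}$.

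First I would treat $\mathpzc{K}$. Since $\widetilde{\mathcal{L}}_{\eta^C}$ is a tensor derivation (Lemma \ref{Willmor}), it is enough to evaluate $(\widetilde{\mathcal{L}}_{\eta^C}\mathpzc{K})(\widehat{\xi})=\widetilde{\mathcal{L}}_{\eta^C}(\mathpzc{K}\widehat{\xi})-\mathpzc{K}(\widetilde{\mathcal{L}}_{\eta^C}\widehat{\xi})$ on each generator $\widehat{\xi}$, $\xi\in\Gamma(E)$. Writing $\mathpzc{K}\widehat{\xi}=\mathcal{V}\llb S,\xi^h\rrb$, so that $i(\mathpzc{K}\widehat{\xi})=v\llb S,\xi^h\rrb$, and recalling $\widetilde{\mathcal{L}}_{\eta^C}\widehat{\xi}=\widehat{[\eta,\xi]}$ together with $i\,\widetilde{\mathcal{L}}_{\eta^C}\bar{\mu}=\llb\eta^C,i\bar{\mu}\rrb$ from (\ref{wd}), I obtain $i\,\widetilde{\mathcal{L}}_{\eta^C}(\mathpzc{K}\widehat{\xi})=\llb\eta^C,v\llb S,\xi^h\rrb\rrb$. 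Now the Lie symmetry hypothesis enters through Lemma \ref{lemedoktor}: part (iii), $[v,\eta^C]^{F-N}=0$, lets me pull $v$ outside the outer bracket; the Jacobi identity together with $\llb S,\eta^C\rrb=0$ collapses $\llb\eta^C,\llb S,\xi^h\rrb\rrb$ to $\llb S,\llb\eta^C,\xi^h\rrb\rrb$; and part (ii), $\llb\eta^C,\xi^h\rrb=[\eta,\xi]^h$, turns this into $v\llb S,[\eta,\xi]^h\rrb=i(\mathpzc{K}\widehat{[\eta,\xi]})$. Since $i$ is injective, $(\widetilde{\mathcal{L}}_{\eta^C}\mathpzc{K})(\widehat{\xi})=0$ for every $\xi$, hence $\widetilde{\mathcal{L}}_{\eta^C}\mathpzc{K}=0$.

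Next I would record the commutation lemma $\widetilde{\mathcal{L}}_{\eta^C}\circ\nabla^v=\nabla^v\circ\widetilde{\mathcal{L}}_{\eta^C}$ on $\mathfrak{T}(\pi^*\pi)$. Rewriting (\ref{jacobi}) by means of $\widetilde{\mathcal{L}}_{\zeta^V}=\nabla^v_{\widehat{\zeta}}$ gives the operator identity $\widetilde{\mathcal{L}}_{\eta^C}\circ\nabla^v_{\widehat{\xi}}-\nabla^v_{\widehat{\xi}}\circ\widetilde{\mathcal{L}}_{\eta^C}=\nabla^v_{\widehat{[\eta,\xi]}}$; as the left-hand commutator and the right-hand side are both derivations agreeing on functions and on $\Gamma(\pi^*\pi)$, the identity holds on all tensors. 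Feeding this into the Leibniz rule for $\widetilde{\mathcal{L}}_{\eta^C}$ applied to the extra covariant slot of $\nabla^v T$, and using $\widetilde{\mathcal{L}}_{\eta^C}\widehat{\xi}=\widehat{[\eta,\xi]}$, the two $\nabla^v_{\widehat{[\eta,\xi]}}T$ contributions cancel and leave $\widetilde{\mathcal{L}}_{\eta^C}(\nabla^v T)=\nabla^v(\widetilde{\mathcal{L}}_{\eta^C}T)$ for every tensor $T$.

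Finally the cascade: from $\widetilde{\mathcal{L}}_{\eta^C}\mathpzc{K}=0$ the commutation lemma gives $\widetilde{\mathcal{L}}_{\eta^C}(\nabla^v\mathpzc{K})=\nabla^v(\widetilde{\mathcal{L}}_{\eta^C}\mathpzc{K})=0$; since $\mathpzc{R}$ is merely the antisymmetrization of $\nabla^v\mathpzc{K}$ in two of its arguments and $\widetilde{\mathcal{L}}_{\eta^C}$ commutes with permutations of arguments, $\widetilde{\mathcal{L}}_{\eta^C}\mathpzc{R}=0$. One more application of $\nabla^v$ and the same lemma yield $\widetilde{\mathcal{L}}_{\eta^C}\mathpzc{H}=\widetilde{\mathcal{L}}_{\eta^C}(\nabla^v\mathpzc{R})=\nabla^v(\widetilde{\mathcal{L}}_{\eta^C}\mathpzc{R})=0$. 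I expect the genuine work to be concentrated in the $\mathpzc{K}$ step, where all three equivalent forms of the Lie-symmetry condition from Lemma \ref{lemedoktor}, the vertical-projector algebra, and the Jacobi identity must be combined; the remaining obstacle is the routine but necessary verification that the bracket identity (\ref{jacobi}) propagates from its proven cases to arbitrary tensors, which is what legitimizes the commutation lemma.
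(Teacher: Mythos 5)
Your proposal is correct, and its first step is exactly the paper's: for $\mathpzc{K}$ the authors also evaluate $(\widetilde{\mathcal{L}}_{\eta^C}\mathpzc{K})\widehat{\xi}$ on generators, pass through $i$, and combine Lemma \ref{lemedoktor}(ii), the Jacobi identity with $\llb S,\eta^C\rrb=0$, and Lemma \ref{lemedoktor}(iii) (they package the final cancellation as $-i^{-1}([v,\eta^C]^{F-N}\llb S,\xi^h\rrb)=0$, which is the same algebra as yours). Where you genuinely diverge is in the $\mathpzc{R}$ and $\mathpzc{H}$ steps. The paper never formulates your operator identity $\widetilde{\mathcal{L}}_{\eta^C}\circ\nabla^v=\nabla^v\circ\widetilde{\mathcal{L}}_{\eta^C}$ on all of $\mathfrak{T}(\pi^*\pi)$; instead it expands $(\widetilde{\mathcal{L}}_{\eta^C}\mathpzc{R})(\widehat{\xi},\widehat{\sigma})$ and $(\widetilde{\mathcal{L}}_{\eta^C}\mathpzc{H})(\widehat{\xi},\widehat{\sigma},\widehat{\vartheta})$ slot by slot, cancelling terms by hand using (\ref{jacobi}), the identity $\widetilde{\mathcal{L}}_{[\eta,\xi]^V}=\nabla^v_{\widehat{[\eta,\xi]}}$ from (\ref{sufi}), Corollary \ref{me}, relation (\ref{20}) and the commutation (\ref{mm}) established in the $\mathpzc{K}$ step. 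Your commutation lemma is precisely the abstract content of those cancellations: the commutator $\widetilde{\mathcal{L}}_{\eta^C}\circ\nabla^v_{\widehat{\xi}}-\nabla^v_{\widehat{\xi}}\circ\widetilde{\mathcal{L}}_{\eta^C}$ is a tensor derivation whose function part is $\rho_\pounds([\eta,\xi]^V)$, so by Lemma \ref{Willmor} the case-checking on functions and on $\Gamma(\pi^*\pi)$ done in the corollary containing (\ref{jacobi}) does extend it to all tensors, and then the two $\nabla^v_{\widehat{[\eta,\xi]}}T$ terms cancel exactly as you say (tensoriality in the slot, plus the fact that $\{\widehat{\xi}\}$ generates $\Gamma(\pi^*\pi)$ over $C^\infty(E)$, covers general arguments). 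What your route buys is that the cascade $\mathpzc{K}\Rightarrow\nabla^v\mathpzc{K}\Rightarrow\mathpzc{R}\Rightarrow\nabla^v\mathpzc{R}=\mathpzc{H}$ becomes automatic, since antisymmetrization and permutation of slots commute with $\widetilde{\mathcal{L}}_{\eta^C}$; in particular you get $\widetilde{\mathcal{L}}_{\eta^C}\mathpzc{H}=0$ in one line, where the paper needs a separate expansion culminating in (\ref{lloccal}). What the paper's route buys is that it never has to legitimize the extension of (\ref{jacobi}) to arbitrary tensors, at the cost of longer and more error-prone computations; your version is the cleaner organization, and as you note, the only point requiring care is exactly that extension step, which Lemma \ref{Willmor} supplies.
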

\begin{proof}
Let $\xi$ be a section of $E$. Then
\begin{align*}
(\widetilde{\mathcal{L}}_{\eta^C}\mathpzc{K})\widehat{\xi}&=\widetilde{\mathcal{L}}_{\eta^C}(\mathpzc{K}(\widehat{\xi}))-\mathpzc{K}(\widetilde{\mathcal{L}}_{\eta^C}\widehat{\xi})=\widetilde{\mathcal{L}}_{\eta^C}(\mathcal{V}\llb S,\xi^h\rrb)-\mathcal{V}\llb S,\mathcal{H}(\widetilde{\mathcal{L}}_{\eta^C}\widehat{\xi})\rrb\\
&=i^{-1}\llb \eta^C,v\llb S,\xi^h\rrb\rrb-\mathcal{V}\llb S, \mathcal{H}\circ i^{-1}\llb \eta^C,\xi^v\rrb\rrb\\
&=i^{-1}(\llb \eta^C,v\llb S,\xi^h\rrb\rrb-v\llb S,[\eta,\xi]^h\rrb\rrb).
\end{align*}
Using lemma (\ref{lemedoktor}), yields
$$(\widetilde{\mathcal{L}}_{\eta^C}\mathpzc{K})\widehat{\xi}=i^{-1}(\llb \eta^C,v\llb S,\xi^h\rrb\rrb-v\llb S,\llb \eta^C,\xi^h\rrb\rrb).$$
According to the Jacobi identity and because of $\eta$ is a Lie symmetry of $S$, we have $\llb S,\llb \eta^C,\xi^h\rrb\rrb=\llb \eta^C,\llb S,\xi^h\rrb\rrb$. Therefore and using lemma (\ref{lemedoktor}), the following will be yield.
$$(\widetilde{\mathcal{L}}_{\eta^C}\mathpzc{K})\widehat{\xi}=-i^{-1}([v,\eta^C]^{F-N}\llb S,\xi^h\rrb)=0,$$
proving the first assertion. Also the vanishing of $\widetilde{\mathcal{L}}_{\eta^C}\mathpzc{K}$ is equivalent to
\begin{equation}\label{mm}
\widetilde{\mathcal{L}}_{\eta^C}\circ \mathpzc{K}=\mathpzc{K}\circ\widetilde{\mathcal{L}}_{\eta^C}.
\end{equation}
Using relation (\ref{jacobi}), we deduce
\begin{equation}
\widetilde{\mathcal{L}}_{\eta^C}(\nabla^{v}\mathpzc{K}(\widehat{\sigma},\widehat{\xi}))=\nabla^{v}_{\widehat{\sigma}}\circ
\widetilde{\mathcal{L}}_{\eta^C}\mathpzc{K}(\widehat{\xi})+
\widetilde{\mathcal{L}}_{[\eta,\sigma]^V}(\mathpzc{K}(\widehat{\xi})).
\end{equation}
On proceeding to prove the second, we check it on the locally bases as follow
\begin{equation}\label{local}
(\widetilde{\mathcal{L}}_{\eta^C}\mathpzc{R})(\widehat{\xi},\widehat{\sigma})=\widetilde{\mathcal{L}}_{\eta^C}(\mathpzc{R}(\widehat{\xi},\widehat{\sigma}))-\mathpzc{R}(\widetilde{\mathcal{L}}_{\eta^C}\widehat{\xi},\widehat{\sigma})-\mathpzc{R}(\widehat{\xi},\widetilde{\mathcal{L}}_{\eta^C}\widehat{\sigma}).
\end{equation}
But
\begin{align*}
\widetilde{\mathcal{L}}_{\eta^C}(\mathpzc{R}(\widehat{\xi},\widehat{\sigma}))&=\dfrac{1}{3}(\widetilde{\mathcal{L}}_{\eta^C}(\nabla^v\mathpzc{K}(\widehat{\sigma},\widehat{\xi}))-\widetilde{\mathcal{L}}_{\eta^C}(\nabla^v\mathpzc{K}(\widehat{\xi},\widehat{\sigma})))\\
&=\dfrac{1}{3}(\nabla^{v}_{\widehat{\sigma}}\circ
\widetilde{\mathcal{L}}_{\eta^C}\mathpzc{K}(\widehat{\xi})+
\widetilde{\mathcal{L}}_{[\eta,\sigma]^V}(\mathpzc{K}(\widehat{\xi}))\\
&-\nabla^{v}_{\widehat{\xi}}\circ
\widetilde{\mathcal{L}}_{\eta^C}\mathpzc{K}(\widehat{\sigma})-
\widetilde{\mathcal{L}}_{[\eta,\xi]^V}(\mathpzc{K}(\widehat{\sigma}))),
\end{align*}
and
\begin{align*}
\mathpzc{R}(\widetilde{\mathcal{L}}_{\eta^C}\widehat{\xi},\widehat{\sigma})&=\dfrac{1}{3}((\nabla^v\mathpzc{K})(\widehat{\sigma},\widetilde{\mathcal{L}}_{\eta^C}\widehat{\xi})-(\nabla^v\mathpzc{K})(\widetilde{\mathcal{L}}_{\eta^C}\widehat{\xi},\widehat{\sigma}))\\
&=\dfrac{1}{3}((\nabla^v_{\widehat{\sigma}}\mathpzc{K})(\widetilde{\mathcal{L}}_{\eta^C}\widehat{\xi})-(\nabla^v_{\widetilde{\mathcal{L}}_{\eta^C}\widehat{\xi}}\mathpzc{K})(\widehat{\sigma}).
\end{align*}
According to corollary (\ref{me}) and relations (\ref{20}) and (\ref{mm}) and noting that $\widetilde{\mathcal{L}}_{\eta^C}\widehat{\xi}=i^{-1}\llb \eta^C,\xi^V\rrb=\widehat{[\eta,\xi]}$, we can derive
\begin{align*}
\mathpzc{R}(\widetilde{\mathcal{L}}_{\eta^C}\widehat{\xi},\widehat{\sigma})&=\dfrac{1}{3}(\nabla^v_{\widehat{\sigma}}\circ\widetilde{\mathcal{L}}_{\eta^C}(\mathpzc{K}(\widehat{\xi}))-\nabla^v_{\widehat{[\eta,\xi]}}(\mathpzc{K}(\widehat{\sigma}))).
\end{align*}
Thus
\begin{align*}
-\lbrace\mathpzc{R}(\widetilde{\mathcal{L}}_{\eta^C}\widehat{\xi},\widehat{\sigma})+\mathpzc{R}(\widehat{\xi},\widetilde{\mathcal{L}}_{\eta^C}\widehat{\sigma})\rbrace&=-\mathpzc{R}(\widetilde{\mathcal{L}}_{\eta^C}\widehat{\xi},\widehat{\sigma})+\mathpzc{R}(\widetilde{\mathcal{L}}_{\eta^C}\widehat{\sigma},\widehat{\xi})\\
&=\dfrac{1}{3}\lbrace-\nabla^v_{\widehat{\sigma}}\circ\widetilde{\mathcal{L}}_{\eta^C}(\mathpzc{K}(\widehat{\xi}))+\nabla^v_{\widehat{[\eta,\xi]}}(\mathpzc{K}(\widehat{\sigma}))\\
&+\nabla^v_{\widehat{\xi}}\circ\widetilde{\mathcal{L}}_{\eta^C}(\mathpzc{K}(\widehat{\sigma}))-\nabla^v_{\widehat{[\eta,\sigma]}}(\mathpzc{K}(\widehat{\xi}))\rbrace\\
&=\dfrac{1}{3}\lbrace-\nabla^v_{\widehat{\sigma}}\circ\widetilde{\mathcal{L}}_{\eta^C}(\mathpzc{K}(\widehat{\xi}))+\widetilde{\mathcal{L}}_{[\eta,\xi]^V}(\mathpzc{K}(\widehat{\sigma}))\\
&+\nabla^v_{\widehat{\xi}}\circ\widetilde{\mathcal{L}}_{\eta^C}(\mathpzc{K}(\widehat{\sigma}))-\widetilde{\mathcal{L}}_{[\eta,\sigma]^V}(\mathpzc{K}(\widehat{\xi}))\rbrace.
\end{align*}
Therefore the right-hand side of (\ref{local}) is zero. Hence we proved the second assertion. Finally for $\mathpzc{H}$, using (\ref{jacobi}) and corollary (\ref{me}) possess to
\begin{align*}
(\widetilde{\mathcal{L}}_{\eta^C}\mathpzc{H})(\widehat{\xi},\widehat{\sigma},\widehat{\vartheta})&=\widetilde{\mathcal{L}}_{\eta^C}(\mathpzc{H}(\widehat{\xi},\widehat{\sigma})\widehat{\vartheta})-\mathpzc{H}(\widetilde{\mathcal{L}}_{\eta^C}\widehat{\xi},\widehat{\sigma})\widehat{\vartheta}\\
&-\mathpzc{H}(\widehat{\xi},\widetilde{\mathcal{L}}_{\eta^C}\widehat{\sigma})\widehat{\vartheta}-\mathpzc{H}(\widehat{\xi},\widehat{\sigma})\widetilde{\mathcal{L}}_{\eta^C}\widehat{\vartheta}\\
&=\widetilde{\mathcal{L}}_{\eta^C}(\nabla^{v}_{\widehat{\vartheta}}(\mathpzc{R}(\widehat{\xi},\widehat{\sigma})))-(\nabla^{v}_{\widehat{\vartheta}}\mathpzc{R})(\widetilde{\mathcal{L}}_{\eta^C}\widehat{\xi},\widehat{\sigma})\\
&-(\nabla^{v}_{\widehat{\vartheta}}\mathpzc{R})(\widehat{\xi},\widetilde{\mathcal{L}}_{\eta^C}\widehat{\sigma})-(\nabla^{v}_{\widehat{[\eta,\vartheta]}}\mathpzc{R})(\widehat{\xi},\widehat{\sigma})\\
&=\nabla^{v}_{\widehat{\vartheta}}\circ \widetilde{\mathcal{L}}_{\eta^C}(\mathpzc{R}(\widehat{\xi},\widehat{\sigma}))+\widetilde{\mathcal{L}}_{[\eta,\vartheta]^V}(\mathpzc{R}(\widehat{\xi},\widehat{\sigma}))\\
&-\nabla^{v}_{\widehat{\vartheta}}(\mathpzc{R}(\widetilde{\mathcal{L}}_{\eta^C}\widehat{\xi},\widehat{\sigma}))-\nabla^{v}_{\widehat{\vartheta}}(\mathpzc{R}(\widehat{\xi},\widetilde{\mathcal{L}}_{\eta^C}\widehat{\sigma}))
-\nabla^{v}_{\widehat{[\eta,\vartheta]}}(\mathpzc{R}(\widehat{\xi},\widehat{\sigma})).
\end{align*}
Since $\widetilde{\mathcal{L}}_{\eta^C}\mathpzc{R}=0$, thus from (\ref{local}) we obtain
\begin{equation}\label{lloccal}
\widetilde{\mathcal{L}}_{\eta^C}(\mathpzc{R}(\widehat{\xi},\widehat{\sigma}))=\mathpzc{R}(\widetilde{\mathcal{L}}_{\eta^C}\widehat{\xi},\widehat{\sigma})+\mathpzc{R}(\widehat{\xi},\widetilde{\mathcal{L}}_{\eta^C}\widehat{\sigma}).
\end{equation}
Arising from (\ref{lloccal}) and relation (\ref{sufi}) we deduce that $\widetilde{\mathcal{L}}_{\eta^C}\mathpzc{H}=0$.
\end{proof}
\begin{lemma}\label{ees}
Let $F\in C^{\infty}(E)$, then $(\widetilde{\mathcal{L}}_{\eta^C}\nabla^vF)(\widehat{\xi})=\rho_\pounds(\xi^V)(\rho_\pounds(\eta^C)F)$.
\end{lemma}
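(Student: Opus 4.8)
The plan is to regard $\nabla^v F$ as a $(0,1)$-tensor of $\pi^*\pi$ and to exploit that $\widetilde{\mathcal{L}}_{\eta^C}$ is a tensor derivation along $\pi_\pounds$, hence obeys the Leibniz rule against contractions. Applying this to the pairing of the one-form $\nabla^v F$ with the section $\widehat{\xi}$ gives
$$(\widetilde{\mathcal{L}}_{\eta^C}\nabla^v F)(\widehat{\xi}) = \widetilde{\mathcal{L}}_{\eta^C}\big((\nabla^v F)(\widehat{\xi})\big) - (\nabla^v F)\big(\widetilde{\mathcal{L}}_{\eta^C}\widehat{\xi}\big).$$
Since $(\nabla^v F)(\widehat{\xi})$ is a function on $E$ and $\widetilde{\mathcal{L}}_{\eta^C}$ acts on $C^\infty(E)$ as $\rho_\pounds(\eta^C)$ by (\ref{shahin}), the first term reduces to an ordinary directional derivative, and the whole computation becomes a matter of identifying the two scalar contributions.

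Next I would evaluate each piece with the identities already in hand. From the definition of the vertical differential together with $i\widehat{\xi}=\xi^V$ one has $(\nabla^v F)(\widehat{\xi}) = \rho_\pounds(i\widehat{\xi})F = \rho_\pounds(\xi^V)F$, so the first term becomes $\rho_\pounds(\eta^C)\big(\rho_\pounds(\xi^V)F\big)$. For the second term I would use the lemma giving $\widetilde{\mathcal{L}}_{\eta^C}\widehat{\xi}=\widehat{[\eta,\xi]}$; feeding this back into $\nabla^v F$ and again using $i\widehat{[\eta,\xi]}=[\eta,\xi]^V$ yields $(\nabla^v F)(\widetilde{\mathcal{L}}_{\eta^C}\widehat{\xi}) = \rho_\pounds([\eta,\xi]^V)F$. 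Hence
$$(\widetilde{\mathcal{L}}_{\eta^C}\nabla^v F)(\widehat{\xi}) = \rho_\pounds(\eta^C)\big(\rho_\pounds(\xi^V)F\big) - \rho_\pounds([\eta,\xi]^V)F.$$

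Finally, the target expression $\rho_\pounds(\xi^V)(\rho_\pounds(\eta^C)F)$ is reached by rewriting the last term through a commutator. Because $\rho_\pounds$ is the anchor of the prolongation Lie algebroid it is a Lie algebra homomorphism, so together with the lift identity $\llb\eta^C,\xi^V\rrb=[\eta,\xi]^V$ (the prolongation analogue of $[\xi^c,\eta^\vee]=[\xi,\eta]^\vee_E$, the very relation already invoked in the proof of (\ref{jacobi})) one gets $\rho_\pounds([\eta,\xi]^V) = \rho_\pounds(\llb\eta^C,\xi^V\rrb) = [\rho_\pounds(\eta^C),\rho_\pounds(\xi^V)]$. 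Substituting this commutator cancels the term $\rho_\pounds(\eta^C)(\rho_\pounds(\xi^V)F)$ and leaves exactly $\rho_\pounds(\xi^V)(\rho_\pounds(\eta^C)F)$, as claimed. The only genuinely delicate point is this bracket identity $\llb\eta^C,\xi^V\rrb=[\eta,\xi]^V$; but since it is a standard lift property already used in the paper, no new calculation is needed and the rest is purely formal.
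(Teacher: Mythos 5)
Your proposal is correct and follows essentially the same route as the paper's proof: both apply the Leibniz rule of the tensor derivation $\widetilde{\mathcal{L}}_{\eta^C}$ to the contraction $(\nabla^v F)(\widehat{\xi})=\rho_\pounds(\xi^V)F$, use $\widetilde{\mathcal{L}}_{\eta^C}\widehat{\xi}=\widehat{[\eta,\xi]}$, and cancel terms via the identity $\llb \eta^C,\xi^V\rrb=[\eta,\xi]^V$ combined with $\rho_\pounds$ being a Lie algebra homomorphism. The only cosmetic difference is the order of the commutator manipulation (the paper expands $\rho_\pounds(\eta^C)(\rho_\pounds(\xi^V)F)$ into a bracket term plus the reversed-order term, whereas you rewrite $\rho_\pounds([\eta,\xi]^V)$ as the commutator), which is the same algebra rearranged.
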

\begin{proof}
For every $\xi\in\Gamma(E)$, we have
\begin{align*}
(\widetilde{\mathcal{L}}_{\eta^C}\nabla^vF)(\widehat{\xi})&=\rho_\pounds(\eta^C)(\rho_\pounds(\xi^V)F)-\nabla^vF(\widehat{[\eta,\xi]})\\
&=\rho_\pounds(\llb \eta^C,\xi^V\rrb)F+\rho_\pounds(\xi^V)(\rho_\pounds(\eta^C)F)-\rho_\pounds([\eta,\xi]^V)F\\
&=\rho_\pounds(\xi^V)(\rho_\pounds(\eta^C)F).
\end{align*}
\end{proof}
\begin{theorem}
Let $S$ be a spray on the prolongation of a Lie algebroid with the structure $(\llb ., .\rrb, \rho_\pounds)$ and $\eta$ be a Lie symmetry of $S$. Then $\eta^C$ is a curvature collineation of
$\mathpzc{W^\circ}$, $\mathpzc{W}$ and $\mathpzc{W^*}$.
\end{theorem}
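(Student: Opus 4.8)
The plan is to run the three-tier argument of Theorem~\ref{similar} once more, noting that the triple $(\mathpzc{W}^\circ,\mathpzc{W},\mathpzc{W}^*)$ is assembled from $\nabla^v$ exactly as $(\mathpzc{K},\mathpzc{R},\mathpzc{H})$ was: the defining formula $\mathpzc{W}(\bar\eta,\bar\xi)=\frac{1}{3}(\nabla^v\mathpzc{W}^\circ(\bar\xi,\bar\eta)-\nabla^v\mathpzc{W}^\circ(\bar\eta,\bar\xi))$ has the same shape as that of $\mathpzc{R}$ with $\mathpzc{W}^\circ$ replacing $\mathpzc{K}$, and $\mathpzc{W}^*(\bar\eta,\bar\xi)(\bar\sigma)=\nabla^v\mathpzc{W}(\bar\sigma,\bar\eta,\bar\xi)$ has the same shape as that of $\mathpzc{H}$ with $\mathpzc{W}$ replacing $\mathpzc{R}$. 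Hence the only genuinely new input is the base case $\widetilde{\mathcal{L}}_{\eta^C}\mathpzc{W}^\circ=0$; granting it, the two derivations in the proof of Theorem~\ref{similar} that pass from $\mathpzc{K}$ to $\mathpzc{R}$ and from $\mathpzc{R}$ to $\mathpzc{H}$ transcribe verbatim under $\mathpzc{K}\mapsto\mathpzc{W}^\circ$, $\mathpzc{R}\mapsto\mathpzc{W}$, $\mathpzc{H}\mapsto\mathpzc{W}^*$.

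First I would establish $\widetilde{\mathcal{L}}_{\eta^C}\mathpzc{W}^\circ=0$ by applying the tensor derivation $\widetilde{\mathcal{L}}_{\eta^C}$ term by term to
\begin{equation*}
\mathpzc{W}^\circ=\mathpzc{K}-\tfrac{1}{n-1}(\text{tr}\,\mathpzc{K})Id+\tfrac{3}{n+1}(\text{tr}\,\mathpzc{R})\otimes\delta+\tfrac{2-n}{n^2-1}(\nabla^v\text{tr}\,\mathpzc{K})\otimes\delta,
\end{equation*}
using the Leibniz rule over $\otimes$ and the fact that $\widetilde{\mathcal{L}}_{\eta^C}$ commutes with contraction. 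By Theorem~\ref{similar} we already know $\widetilde{\mathcal{L}}_{\eta^C}\mathpzc{K}=0$ and $\widetilde{\mathcal{L}}_{\eta^C}\mathpzc{R}=0$, whence $\widetilde{\mathcal{L}}_{\eta^C}(\text{tr}\,\mathpzc{K})=\text{tr}(\widetilde{\mathcal{L}}_{\eta^C}\mathpzc{K})=0$ and $\widetilde{\mathcal{L}}_{\eta^C}(\text{tr}\,\mathpzc{R})=0$. Since $\text{tr}\,\mathpzc{K}$ is thereby annihilated, Lemma~\ref{ees} gives $(\widetilde{\mathcal{L}}_{\eta^C}\nabla^v\text{tr}\,\mathpzc{K})(\widehat\xi)=\rho_\pounds(\xi^V)(\rho_\pounds(\eta^C)\text{tr}\,\mathpzc{K})=0$, so $\widetilde{\mathcal{L}}_{\eta^C}(\nabla^v\text{tr}\,\mathpzc{K})=0$. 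It remains to kill the two structural factors $Id$ and $\delta$: for the identity tensor, $(\widetilde{\mathcal{L}}_{\eta^C}Id)(\bar\xi)=\widetilde{\mathcal{L}}_{\eta^C}\bar\xi-Id(\widetilde{\mathcal{L}}_{\eta^C}\bar\xi)=0$ because $\widetilde{\mathcal{L}}_{\eta^C}$ commutes with the contraction defining $Id(\bar\xi)=\bar\xi$; for the Liouville section $\delta=\textbf{y}^\alpha\widehat{e_\alpha}$, a short computation from $\rho_\pounds(\eta^C)\textbf{y}^\alpha=\textbf{y}^\beta(\eta^\alpha_{|\beta}\circ\pi)$ and $\widetilde{\mathcal{L}}_{\eta^C}\widehat{e_\alpha}=\widehat{[\eta,e_\alpha]}=-(\eta^\gamma_{|\alpha}\circ\pi)\widehat{e_\gamma}$ gives $\widetilde{\mathcal{L}}_{\eta^C}\delta=0$ after relabelling. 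Every summand thus vanishes.

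Next, to get $\widetilde{\mathcal{L}}_{\eta^C}\mathpzc{W}=0$, observe the base case is equivalent to $\widetilde{\mathcal{L}}_{\eta^C}\circ\mathpzc{W}^\circ=\mathpzc{W}^\circ\circ\widetilde{\mathcal{L}}_{\eta^C}$, the analog of (\ref{mm}). Substituting this, the commutation identity (\ref{jacobi}), corollary~\ref{me}, relation (\ref{20}) and $\widetilde{\mathcal{L}}_{\eta^C}\widehat\xi=\widehat{[\eta,\xi]}$ into the expansion $(\widetilde{\mathcal{L}}_{\eta^C}\mathpzc{W})(\widehat\xi,\widehat\sigma)=\widetilde{\mathcal{L}}_{\eta^C}(\mathpzc{W}(\widehat\xi,\widehat\sigma))-\mathpzc{W}(\widetilde{\mathcal{L}}_{\eta^C}\widehat\xi,\widehat\sigma)-\mathpzc{W}(\widehat\xi,\widetilde{\mathcal{L}}_{\eta^C}\widehat\sigma)$ makes the right-hand side collapse exactly as it did for $\mathpzc{R}$, and in particular yields the analog of (\ref{lloccal}), $\widetilde{\mathcal{L}}_{\eta^C}(\mathpzc{W}(\widehat\xi,\widehat\sigma))=\mathpzc{W}(\widetilde{\mathcal{L}}_{\eta^C}\widehat\xi,\widehat\sigma)+\mathpzc{W}(\widehat\xi,\widetilde{\mathcal{L}}_{\eta^C}\widehat\sigma)$. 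Feeding this last identity, together with (\ref{sufi}) and the definition $\mathpzc{W}^*(\bar\eta,\bar\xi)(\bar\sigma)=\nabla^v\mathpzc{W}(\bar\sigma,\bar\eta,\bar\xi)$, into the same manipulation used for $\mathpzc{H}$ then delivers $\widetilde{\mathcal{L}}_{\eta^C}\mathpzc{W}^*=0$.

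The main obstacle is the base case $\widetilde{\mathcal{L}}_{\eta^C}\mathpzc{W}^\circ=0$, and within it the two auxiliary identities $\widetilde{\mathcal{L}}_{\eta^C}(Id)=0$ and $\widetilde{\mathcal{L}}_{\eta^C}\delta=0$, since these govern precisely the trace- and Liouville-terms that distinguish $\mathpzc{W}^\circ$ from $\mathpzc{K}$ and are not covered by the lemmas invoked for Theorem~\ref{similar}. Once they are checked, everything downstream is a formal transcription of the earlier proof, requiring no fresh computation with the structure functions.
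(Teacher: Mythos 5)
Your proposal is correct and follows essentially the same route as the paper: expand $\mathpzc{W}^\circ$ term by term under $\widetilde{\mathcal{L}}_{\eta^C}$, kill the $\mathpzc{K}$- and $\mathpzc{R}$-terms via Theorem~\ref{similar} together with $\widetilde{\mathcal{L}}_{\eta^C}\circ\text{tr}=\text{tr}\circ\widetilde{\mathcal{L}}_{\eta^C}$, dispose of the remaining $(\nabla^v\text{tr}\,\mathpzc{K})\otimes\delta$ term via Lemma~\ref{ees}, and then transcribe the $\mathpzc{K}\to\mathpzc{R}\to\mathpzc{H}$ arguments to get $\mathpzc{W}$ and $\mathpzc{W}^*$. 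The only cosmetic difference is that you verify $\widetilde{\mathcal{L}}_{\eta^C}\delta=0$ by a coordinate computation, where the paper obtains it instantly from $\widetilde{\mathcal{L}}_{\eta^C}\delta=\mathcal{V}\llb\eta^C,C\rrb=\mathcal{V}(0)=0$, i.e.\ homogeneity of the complete lift; both are valid.
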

\begin{proof}
Noting that $\widetilde{\mathcal{L}}_{\eta^C}Id_{\Gamma({\pounds^\pi E})}\equiv0$, $\widetilde{\mathcal{L}}_{\eta^C}\delta=\mathcal{V}(0)=0$ and $\widetilde{\mathcal{L}}_{\eta^C}\circ\text{tr}=\text{tr}\circ\widetilde{\mathcal{L}}_{\eta^C}$ we have
\begin{align*}
\widetilde{\mathcal{L}}_{\eta^C}\mathpzc{W^\circ}&=\widetilde{\mathcal{L}}_{\eta^C}\mathpzc{K}-\dfrac{1}{n-1}\text{tr}(\widetilde{\mathcal{L}}_{\eta^C}\mathpzc{K})Id_{\Gamma({\pounds^\pi E})}+\dfrac{3}{n+1}(\text{tr}(\widetilde{\mathcal{L}}_{\eta^C}\mathpzc{R}))\otimes\delta\\
&+\dfrac{2-n}{n^2-1}(\widetilde{\mathcal{L}}_{\eta^C}(\nabla^v\text{tr}\mathpzc{K}))\otimes\delta\\
&=\dfrac{2-n}{n^2-1}(\widetilde{\mathcal{L}}_{\eta^C}(\nabla^v\text{tr}\mathpzc{K}))\otimes\delta.
\end{align*}
Now according to  lemma (\ref{ees}), $(\widetilde{\mathcal{L}}_{\eta^C}(\nabla^v\text{tr}\mathpzc{K}))(\widehat{\xi})=0$, that proves $\widetilde{\mathcal{L}}_{\eta^C}\mathpzc{W^\circ}=0$. The similar result for $\mathpzc{W}$ and $\mathpzc{W}^*$ is analogous to theorem (\ref{similar}).
\end{proof}
\begin{theorem}
Let $S$ be a spray on the prolongation of a Lie algebroid with the structure $(\llb ., .\rrb, \rho_\pounds)$ and $\eta$ be a Lie symmetry of $S$. Then $\eta^C$ is a curvature collineation of the
Berwald curvature.
\end{theorem}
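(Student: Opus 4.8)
The plan is to mirror the argument used for $\mathpzc{H}$ in Theorem \ref{similar}, exploiting that the Berwald curvature is assembled from the two differentials $\nabla^v$ and $\nabla^h$, for both of which we already possess commutation rules with $\widetilde{\mathcal{L}}_{\eta^C}$. First I would record the simplified form of $\mathpzc{B}$: since $\nabla^v_{\widehat\eta}\widehat\xi=0$ by (\ref{20}), the tensor derivation property collapses the defining expression to $\mathpzc{B}(\widehat\eta,\widehat\xi)\widehat\sigma=\nabla^v_{\widehat\eta}\big(\nabla^h_{\widehat\xi}\widehat\sigma\big)$, so the whole object is merely $\nabla^v$ of $\nabla^h$ applied to hatted sections. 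I would also recall the dictionary $\widetilde{\mathcal{L}}_{\eta^C}\widehat\xi=\widehat{[\eta,\xi]}$, $\widetilde{\mathcal{L}}_{\eta^V}=\nabla^v_{\widehat\eta}$, $\widetilde{\mathcal{L}}_{\eta^h}=\nabla^h_{\widehat\eta}$, and hence $\widetilde{\mathcal{L}}_{[\eta,\xi]^V}=\nabla^v_{\widehat{[\eta,\xi]}}$.

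Next, writing out $(\widetilde{\mathcal{L}}_{\eta^C}\mathpzc{B})(\widehat\xi,\widehat\sigma,\widehat\vartheta)$ by the derivation rule produces $\widetilde{\mathcal{L}}_{\eta^C}\big(\mathpzc{B}(\widehat\xi,\widehat\sigma)\widehat\vartheta\big)$ together with the three correction terms $\mathpzc{B}(\widehat{[\eta,\xi]},\widehat\sigma)\widehat\vartheta$, $\mathpzc{B}(\widehat\xi,\widehat{[\eta,\sigma]})\widehat\vartheta$ and $\mathpzc{B}(\widehat\xi,\widehat\sigma)\widehat{[\eta,\vartheta]}$, where I have used $\widetilde{\mathcal{L}}_{\eta^C}\widehat\xi=\widehat{[\eta,\xi]}$ in each slot. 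The heart of the computation is to push $\widetilde{\mathcal{L}}_{\eta^C}$ through the nested derivatives in the first term. Using (\ref{jacobi}) I move it across the outer $\nabla^v_{\widehat\xi}$, picking up the extra summand $\nabla^v_{\widehat{[\eta,\xi]}}\big(\nabla^h_{\widehat\sigma}\widehat\vartheta\big)=\mathpzc{B}(\widehat{[\eta,\xi]},\widehat\sigma)\widehat\vartheta$; then I apply (\ref{ss}) to commute it across the inner $\nabla^h_{\widehat\sigma}$, which yields $\nabla^h_{\widehat\sigma}\widehat{[\eta,\vartheta]}$ plus the anomalous term $\widetilde{\mathcal{L}}_{\llb\eta^C,\sigma^h\rrb}\widehat\vartheta$.

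The key step is the treatment of that anomalous term. Because $\eta$ is a Lie symmetry of $S$, Lemma \ref{lemedoktor}(ii) gives $\llb\eta^C,\sigma^h\rrb=[\eta,\sigma]^h$, whence $\widetilde{\mathcal{L}}_{\llb\eta^C,\sigma^h\rrb}=\widetilde{\mathcal{L}}_{[\eta,\sigma]^h}=\nabla^h_{\widehat{[\eta,\sigma]}}$, so after applying the outer $\nabla^v_{\widehat\xi}$ it contributes exactly $\mathpzc{B}(\widehat\xi,\widehat{[\eta,\sigma]})\widehat\vartheta$. Assembling the pieces, $\widetilde{\mathcal{L}}_{\eta^C}\big(\mathpzc{B}(\widehat\xi,\widehat\sigma)\widehat\vartheta\big)$ is seen to equal the sum of the three correction terms $\mathpzc{B}(\widehat{[\eta,\xi]},\widehat\sigma)\widehat\vartheta+\mathpzc{B}(\widehat\xi,\widehat{[\eta,\sigma]})\widehat\vartheta+\mathpzc{B}(\widehat\xi,\widehat\sigma)\widehat{[\eta,\vartheta]}$, so everything cancels and $(\widetilde{\mathcal{L}}_{\eta^C}\mathpzc{B})(\widehat\xi,\widehat\sigma,\widehat\vartheta)=0$ on the generating hatted frame, hence $\widetilde{\mathcal{L}}_{\eta^C}\mathpzc{B}=0$ by tensoriality.

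I expect the main obstacle to be precisely the bookkeeping of this anomalous $\nabla^h$-term: commuting past $\nabla^v$ is harmless, since the extra summand is itself of Berwald type, but commuting past $\nabla^h$ produces $\llb\eta^C,\sigma^h\rrb$, which in general is not a horizontal lift and would leave an uncancelled remainder. The Lie symmetry hypothesis, funnelled through Lemma \ref{lemedoktor}, is exactly what identifies this bracket with $[\eta,\sigma]^h$ and thereby matches it to the missing correction term; this is the sole place where the hypothesis on $\eta$ is used, and without it the collineation identity would fail.
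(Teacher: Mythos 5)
Your proof is correct and takes essentially the same route as the paper's: both expand $(\widetilde{\mathcal{L}}_{\eta^C}\mathpzc{B})(\widehat{\xi},\widehat{\sigma},\widehat{\vartheta})$ by the derivation rule, commute $\widetilde{\mathcal{L}}_{\eta^C}$ past the outer $\nabla^v_{\widehat{\xi}}$ via (\ref{jacobi}) (the extra term cancelling through $\widetilde{\mathcal{L}}_{[\eta,\xi]^V}=\nabla^v_{\widehat{[\eta,\xi]}}$) and past the inner $\nabla^h_{\widehat{\sigma}}$ via (\ref{ss}), and then kill the anomalous term $\widetilde{\mathcal{L}}_{\llb\eta^C,\sigma^h\rrb}\widehat{\vartheta}$ against $\nabla^h_{\widehat{[\eta,\sigma]}}\widehat{\vartheta}$ using Lemma~\ref{lemedoktor}(ii), which is exactly where the paper also invokes the Lie symmetry hypothesis. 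The only cosmetic difference is that you make explicit the reduction $\mathpzc{B}(\widehat{\xi},\widehat{\sigma})\widehat{\vartheta}=\nabla^v_{\widehat{\xi}}\nabla^h_{\widehat{\sigma}}\widehat{\vartheta}$ coming from (\ref{20}), which the paper uses silently in its third equality.
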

\begin{proof}
For any sections $\xi, \sigma, \vartheta\in\Gamma(E)$,
\begin{align*}
(\widetilde{\mathcal{L}}_{\eta^C}\mathpzc{B})(\widehat{\xi},\widehat{\sigma},\widehat{\vartheta})&=\widetilde{\mathcal{L}}_{\eta^C}(\mathpzc{B}(\widehat{\xi},\widehat{\sigma})\widehat{\vartheta})-\mathpzc{B}(\widetilde{\mathcal{L}}_{\eta^C}\widehat{\xi},\widehat{\sigma})\widehat{\vartheta}\\
&-\mathpzc{B}(\widehat{\xi},\widetilde{\mathcal{L}}_{\eta^C}\widehat{\sigma})\widehat{\vartheta}-\mathpzc{B}(\widehat{\xi},\widehat{\sigma})\widetilde{\mathcal{L}}_{\eta^C}\widehat{\vartheta}\\
&=\widetilde{\mathcal{L}}_{\eta^C}((\nabla^v\nabla^h\widehat{\vartheta})(\widehat{\xi},\widehat{\sigma}))-((\nabla^v\nabla^h\widehat{\vartheta})(\widehat{[\eta,\xi]},\widehat{\sigma}))\\
&-((\nabla^v\nabla^h\widehat{\vartheta})(\widehat{\xi},\widehat{[\eta,\sigma]}-((\nabla^v\nabla^h\widetilde{\mathcal{L}}_{\eta^C}\widehat{\vartheta})(\widehat{\xi},\widehat{\sigma})\\
&=\widetilde{\mathcal{L}}_{\eta^C}(\nabla^v_{\widehat{\xi}}\nabla^h_{\widehat{\sigma}}\widehat{\vartheta})-\nabla^v_{\widehat{[\eta,\xi]}}\nabla^h_{\widehat{\sigma}}
\widehat{\vartheta}-\nabla^v_{\widehat{\xi}}\nabla^h_{\widehat{[\eta,\sigma]}}\widehat{\vartheta}-\nabla^v_{\widehat{\xi}}
\nabla^h_{\widehat{\sigma}}\widetilde{\mathcal{L}}_{\eta^C}\widehat{\vartheta}\\
&=\nabla^v_{\widehat{\xi}}(\widetilde{\mathcal{L}}_{\eta^C}\nabla^h_{\widehat{\sigma}}\widehat{\vartheta})+\widetilde{\mathcal{L}}_{[\eta,\xi]^V}
\nabla^h_{\widehat{\sigma}}\widehat{\vartheta}-\nabla^v_{\widehat{[\eta,\xi]}}\nabla^h_{\widehat{\sigma}}\widehat{\vartheta}\\
&-\nabla^v_{\widehat{\xi}}\nabla^h_{\widehat{[\eta,\sigma]}}\widehat{\vartheta}-\nabla^v_{\widehat{\xi}}\widetilde{\mathcal{L}}
_{\eta^C}\nabla^h_{\widehat{\sigma}}\widehat{\vartheta}+\nabla^v_{\widehat{\xi}}
\widetilde{\mathcal{L}}_{\llb \eta^C,\sigma^h\rrb}\widehat{\vartheta}\\
&=-\nabla^v_{\widehat{\xi}}\nabla^h_{\widehat{[\eta,\sigma]}}\widehat{\vartheta}+\nabla^v_{\widehat{\xi}}
\widetilde{\mathcal{L}}_{\llb \eta^C,\sigma^h\rrb}\widehat{\vartheta}=0,
\end{align*}
proving the assertion.
\end{proof}
\begin{theorem}
Let $S$ be a spray on the prolongation of a Lie algebroid with the structure $(\llb ., .\rrb, \rho_\pounds)$ and $\eta$ be a Lie symmetry of $S$. Then $\eta^C$ is a curvature collineation of the
Douglas curvature.
\end{theorem}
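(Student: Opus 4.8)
The plan is to exploit that the Douglas curvature $\mathpzc{D}$ is assembled from the Berwald curvature $\mathpzc{B}$ by purely tensorial operations, together with the fact, established in the previous theorem, that $\widetilde{\mathcal{L}}_{\eta^C}\mathpzc{B}=0$. Since $\widetilde{\mathcal{L}}_{\eta^C}$ is a tensor derivation that commutes with contraction and satisfies $\widetilde{\mathcal{L}}_{\eta^C}Id_{\Gamma({\pounds^\pi E})}=0$ and $\widetilde{\mathcal{L}}_{\eta^C}\delta=0$ (as already used in the proof for $\mathpzc{W}^\circ$), applying $\widetilde{\mathcal{L}}_{\eta^C}$ to the definition of $\mathpzc{D}$ and distributing over $\odot$ and $\otimes$ gives
\begin{align*}
\widetilde{\mathcal{L}}_{\eta^C}\mathpzc{D}&=\widetilde{\mathcal{L}}_{\eta^C}\mathpzc{B}-\dfrac{1}{n+1}\lbrace(\text{tr}\,\widetilde{\mathcal{L}}_{\eta^C}\mathpzc{B})\odot Id_{\Gamma({\pounds^\pi E})}+(\widetilde{\mathcal{L}}_{\eta^C}\nabla^v\text{tr}\mathpzc{B})\otimes\delta\rbrace.
\end{align*}
The first term vanishes by the previous theorem, and since contraction commutes with $\widetilde{\mathcal{L}}_{\eta^C}$ we also get $\text{tr}\,\widetilde{\mathcal{L}}_{\eta^C}\mathpzc{B}=\widetilde{\mathcal{L}}_{\eta^C}\text{tr}\mathpzc{B}=0$, which kills the term containing $Id_{\Gamma({\pounds^\pi E})}$. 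Thus the entire assertion reduces to the single identity $\widetilde{\mathcal{L}}_{\eta^C}\nabla^v\text{tr}\mathpzc{B}=0$.

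To handle this I would prove the general fact that, for any tensor $A$ over $\pi^*\pi$ with $\widetilde{\mathcal{L}}_{\eta^C}A=0$, one has $\widetilde{\mathcal{L}}_{\eta^C}\nabla^v A=0$, and then apply it to $A=\text{tr}\mathpzc{B}$, which is annihilated as just noted. Evaluating $\widetilde{\mathcal{L}}_{\eta^C}\nabla^v A$ on basis sections $\widehat{e_\alpha}$ and expanding by the Leibniz rule, each input $\widehat{\xi}_j$ is replaced by $\widetilde{\mathcal{L}}_{\eta^C}\widehat{\xi}_j=\widehat{[\eta,\xi_j]}$. Writing $(\nabla^v A)(\widehat{\xi}_0,\ldots)=\rho_\pounds(\xi_0^V)(A(\widehat{\xi}_1,\ldots))$ with the help of (\ref{20}), I would commute $\rho_\pounds(\eta^C)$ past $\rho_\pounds(\xi_0^V)$ using the function version of (\ref{jacobi}) and then rewrite $\rho_\pounds(\eta^C)(A(\widehat{\xi}_1,\ldots))$ through the hypothesis $\widetilde{\mathcal{L}}_{\eta^C}A=0$. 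The derivative-direction correction $\rho_\pounds([\eta,\xi_0]^V)$ recombines, via (\ref{sufi}), into the missing $j=0$ contribution $(\nabla^v A)(\widehat{[\eta,\xi_0]},\widehat{\xi}_1,\ldots)$, so that all the $\widehat{[\eta,\xi_j]}$-terms cancel in pairs and the total is zero. This is precisely the mechanism that upgrades $\widetilde{\mathcal{L}}_{\eta^C}\mathpzc{R}=0$ to $\widetilde{\mathcal{L}}_{\eta^C}\mathpzc{H}=\widetilde{\mathcal{L}}_{\eta^C}\nabla^v\mathpzc{R}=0$ at the end of Theorem \ref{similar}, now carried out with $\mathpzc{R}$ replaced by $\text{tr}\mathpzc{B}$.

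The main obstacle, and the reason the argument is not a verbatim copy of the $\mathpzc{W}^\circ$ case, is that here $\text{tr}\mathpzc{B}$ is a genuine $(0,2)$-tensor rather than a scalar: for $\mathpzc{W}^\circ$ one had $\text{tr}\mathpzc{K}\in C^\infty(E)$ and could conclude in one line via Lemma \ref{ees}, whereas Lemma \ref{ees} does not by itself cover $\nabla^v$ of a higher-rank tensor. Hence the vanishing of $\widetilde{\mathcal{L}}_{\eta^C}\nabla^v\text{tr}\mathpzc{B}$ must be extracted from the full commutator relation (\ref{jacobi}) combined with the tensorial identity $\widetilde{\mathcal{L}}_{\eta^C}\text{tr}\mathpzc{B}=0$, with careful bookkeeping of the $\widehat{[\eta,\xi_j]}$-corrections in every slot to verify they telescope. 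Once this auxiliary lemma is in place, the collineation property $\widetilde{\mathcal{L}}_{\eta^C}\mathpzc{D}=0$ follows immediately from the reduction displayed above.
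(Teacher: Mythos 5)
Your proposal is correct and takes essentially the same route as the paper: the paper likewise reduces everything to $\widetilde{\mathcal{L}}_{\eta^C}(\nabla^v\text{tr}\mathpzc{B})=0$ and proves it by the very mechanism you describe — evaluating on lifted sections, using (\ref{20}) to write $(\nabla^v\text{tr}\mathpzc{B})(\widehat{\xi},\widehat{\sigma},\widehat{\vartheta})=\rho_\pounds(\xi^V)(\text{tr}\mathpzc{B}(\widehat{\sigma},\widehat{\vartheta}))$, commuting $\rho_\pounds(\eta^C)$ past $\rho_\pounds(\xi^V)$ via $\llb \eta^C,\xi^V\rrb=[\eta,\xi]^V$, and factoring the remainder as $\rho_\pounds(\xi^V)\lbrace(\text{tr}\,\widetilde{\mathcal{L}}_{\eta^C}\mathpzc{B})(\widehat{\sigma},\widehat{\vartheta})\rbrace=0$. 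Your only departure is packaging this computation as a general lemma (if $\widetilde{\mathcal{L}}_{\eta^C}A=0$ then $\widetilde{\mathcal{L}}_{\eta^C}\nabla^vA=0$), which the paper instead carries out directly for $A=\text{tr}\mathpzc{B}$.
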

\begin{proof}
It is enough to show that $\widetilde{\mathcal{L}}_{\eta^C}(\nabla^v\text{tr}\mathpzc{B})=0$. For any sections $\xi, \sigma, \vartheta\in\Gamma(E)$,
\begin{align*}
\widetilde{\mathcal{L}}_{\eta^C}(\nabla^v\text{tr}\mathpzc{B})(\widehat{\xi},\widehat{\sigma},\widehat{\vartheta})&=\rho_\pounds(\eta^C)((\nabla^v\text{tr}\mathpzc{B})(\widehat{\xi},\widehat{\sigma},\widehat{\vartheta}))-(\nabla^v\text{tr}\mathpzc{B})(\widehat{[\eta,\xi]},\widehat{\sigma},\widehat{\vartheta})\\
&-(\nabla^v\text{tr}\mathpzc{B})(\widehat{\xi},\widetilde{\mathcal{L}}_{\eta^C}\widehat{\sigma},\widehat{\vartheta})-(\nabla^v\text{tr}\mathpzc{B})(\widehat{\xi},\widehat{\sigma},\widetilde{\mathcal{L}}_{\eta^C}\widehat{\vartheta})\\
&=\rho_\pounds(\eta^C)\rho_\pounds(\xi^V)(\text{tr}\mathpzc{B}(\widehat{\sigma},\widehat{\vartheta}))-\rho_\pounds([\eta,\xi]^V)(\text{tr}\mathpzc{B}(\widehat{\sigma},\widehat{\vartheta}))\\
&-\rho_\pounds(\xi^V)(\text{tr}\mathpzc{B}(\widetilde{\mathcal{L}}_{\eta^C}\widehat{\sigma},\widehat{\vartheta}))-\rho_\pounds(\xi^V)(\text{tr}\mathpzc{B}(\widehat{\sigma},\widetilde{\mathcal{L}}_{\eta^C}\widehat{\vartheta})).
\end{align*}
Scince $\llb \eta^C,\xi^V\rrb=[\eta,\xi]^V$, then
\begin{align*}
\widetilde{\mathcal{L}}_{\eta^C}(\nabla^v\text{tr}\mathpzc{B})(\widehat{\xi},\widehat{\sigma},\widehat{\vartheta})&=\rho_\pounds(\xi^V)\lbrace \rho_\pounds(\eta^C)\text{tr}\mathpzc{B}(\widehat{\sigma},\widehat{\vartheta})\\
&-\text{tr}\mathpzc{B}(\widetilde{\mathcal{L}}_{\eta^C}\widehat{\sigma},\widehat{\vartheta})-\text{tr}\mathpzc{B}(\widehat{\sigma},\widetilde{\mathcal{L}}_{\eta^C}\widehat{\vartheta})\rbrace\\
&=\rho_\pounds(\xi^V)\lbrace \widetilde{\mathcal{L}}_{\eta^C}\text{tr}\mathpzc{B}(\widehat{\sigma},\widehat{\vartheta})\rbrace\\
&=\rho_\pounds(\xi^V)\lbrace(\text{tr}\widetilde{\mathcal{L}}_{\eta^C}\mathpzc{B})(\widehat{\sigma},\widehat{\vartheta})\rbrace\\
&=0.
\end{align*}
\end{proof}

\end{document}